\begin{document}
\title{An Interpolating Distance between\\Optimal Transport and Fisher-Rao}

\author{
\begin{tabular}{c}
	Lenaic Chizat \qquad Bernhard Schmitzer \\
	Gabriel Peyr\'e \qquad François-Xavier Vialard\\[2mm]
	Ceremade, Université Paris-Dauphine \\
	\texttt{\small\{chizat,peyre,schmitzer,vialard\}@ceremade.dauphine.fr}
\end{tabular} }

\maketitle


\begin{abstract}
This paper defines a new transport metric over the space of non-negative measures. This metric interpolates between the quadratic Wasserstein and the Fisher-Rao metrics and generalizes optimal transport to measures with different masses.
It is defined as a generalization of the dynamical formulation of optimal transport of Benamou and Brenier, by introducing a source term in the continuity equation. The influence of this source term is measured using the Fisher-Rao metric, and is averaged with the transportation term. This gives rise to a convex variational problem defining our metric. 
Our first contribution is a proof of the existence of geodesics (i.e. solutions to this variational problem).
We then show that (generalized) optimal transport and Fisher-Rao metrics are obtained as limiting cases of our metric.
Our last theoretical contribution is a proof that geodesics between mixtures of sufficiently close Diracs are made of translating mixtures of Diracs. 
Lastly, we propose a numerical scheme making use of first order proximal splitting methods and we show an application of this new distance to image interpolation.
\end{abstract}


\section{Introduction}

Optimal transport is an optimization problem which gives rise to a popular class of metrics between probability distributions. We refer to the monograph of Villani~\cite{cedric2003topics} for a detailed overview of optimal transport. We now describe in an informal way various formulations of transportation-like distances, and expose also our new metric. A mathematically rigorous treatment can be found in Section~\ref{sec:theory}.

\subsection{Static and Dynamic Optimal Transport}

In its original formulation by Monge, optimal transport takes into account a ground cost that measures the amount of work needed to transfer a measure towards another one.  This formulation was later relaxed by Kantorovich~\cite{Kantorovich42} as a convex linear program, often referred to as a ``static formulation''. 
Given a \emph{ground cost} $c : \Om^2 \mapsto \R$, $\Om \subset \R^d$ and two probability measures $\rho_0$ and $\rho_1$ on $\Om$, the central problem is that of minimizing
\begin{equation}
\int_{\Om \times \Om} c(x,y) \,\d\gamma (x,y)
\label{optimal transport problem}
\end{equation}
over the set $\Gamma_{(\rho_0,\rho_1)}$ of coupling measures, that is non-negative measures $\gamma$ on $\Om^2$ admitting $\rho_0$ and $\rho_1$ as first and second marginals. More explicitly, for every measurable set $A \in \Om$, $\gamma(A,\Om)=\rho_0(A)$ and $\gamma(\Om,A)=\rho_1(A)$. When the \emph{ground cost} is the distance $|x-y|^p$, the minimum induces a metric on the space of probability measures which is often referred to as the p-Wasserstein distance, denoted by $W_p$.

In a seminal paper, Benamou and Brenier~\cite{benamou2000computational} showed that this static formulation is equivalent to a ``dynamical'' formulation, where one seeks for a ``geodesic'' $\rho(t,x)$ that evolves in time between the two densities. More precisely, they showed that the squared 2-Wasserstein is obtained by minimizing the kinetic energy
\[
\int_0^1 \int_{\Omega} |v(t,x)|^2 \rho(t,x) \d x \d t
\]
where $\rho$ interpolates between $\rho_0$ and $\rho_1$ and $v$ is a velocity field which advects the mass distribution $\rho$. Introducing the key change of variables $(\rho,v) \mapsto (\rho,\M)\defeq (\rho,\rho v)$, this amounts to minimizing the convex functional
\begin{equation}\label{eq-dynamic}
 	\int_0^1 \int_{\Omega} \frac{|\omega(t,x)|^2}{\rho(t,x)} \,\d x \d t
\end{equation}
over the couples $(\rho,\omega)$ satisfying the continuity equation 
\begin{equation}\label{eq-continuity}
	\partial_t \rho + \nabla \cdot \omega =0, \quad \rho(0,\cdot) = \rho_0, \quad \rho(1,\cdot) = \rho_1\,.
\end{equation}
In particular, their paper emphasized the Riemannian interpretation of this metric and paved the way for efficient numerical solvers.

The geometric nature of optimal transport makes this tool interesting to perform shape interpolation, especially in medical image analysis~\cite{haker2004optimal}. Even though the resulting interpolation is somewhat simplistic compared to methods based on diffeomorphic flows such as LDDMM~\cite{beg2005computing}, the fact that the problem is convex is interesting for numerical purposes and large scale applications. 
A major restriction of optimal transport is however that it is only defined between measures having the same mass. This might be an issue for applications in medical imaging, where measures need not be normalized, and biophysical phenomena at stake typically involve some sort of mass creation or destruction.
To date, several generalizations of optimal transport have been proposed, with various goals in mind (see Section~\ref{previous works}). However, there is currently no formulation for which geodesics can be interpreted as a joint  displacement and modification of mass. In this article, we propose to tackle this issue by interpolating between the Wasserstein and the Fisher-Rao metric, which can be achieved using a convex dynamical formulation.

In the remainder of this section, we introduce our model and describe related work which generalizes optimal transport to unbalanced measures. In addition, we introduce in an informal manner a family of distances which shows the similarity between \emph{a priori} unrelated models of transport between unbalanced measures. 
In Section~\ref{sec:theory}, we prove the existence of geodesics and exhibit optimality conditions. 
Section~\ref{sec-interpolation} is devoted to the study of the effect of varying the interpolation factor and to the limit models. 
We detail in Section~\ref{sec-explicit-geod} explicit geodesics between atomic measures satisfying suitable properties. 
Finally, we describe a numerical scheme in Section~\ref{sec-numerics} and use the distance for image interpolation experiments.

\subsection{Presentation of the Model and Contributions}

The dynamical formulation~\eqref{eq-dynamic} suggests a natural way to relax the equality of mass constraint by introducing a source term $\zeta$ in the continuity equation~\eqref{eq-continuity} as follows
\[
	\D_t \rho + \nabla \cdot \M = \zeta, \quad 
	\rho(0,\cdot) = \rho_0, \quad \rho(1,\cdot) = \rho_1, 
\]
and adding a term penalizing $\zeta$ in the Benamou-Brenier functional, in the spirit of metamorphoses introduced in imaging in \cite{Metamorphosis2005}. We think it is natural to require the following two properties for the penalization on $\Z$:
\begin{enumerate}
	\item \textit{Reparametrization invariance.} As the geometrical aspect of the problem is taken care of by the optimal transport part, it is important for the penalty defined on $\zeta$ not to be sensitive to deformations of the objects.
	\item \textit{Relation with  a Riemannian metric.} Riemannian metrics are natural for interpolation purposes and for applications to imaging. Moreover, the standard notion of curvature is available in a Riemannian setting and give some important informations on the underlying geometry.
\end{enumerate}
It has been shown recently that the only Riemannian metric which is invariant by reparametrization on the space of smooth positive densities on a closed manifold of dimension greater than one is the Fisher-Rao metric~\cite{Bauer:2014aa}. It was introduced in 1945 by C. R. Rao, who applied for the first time differential geometry in the field of statistics~\cite{radhakrishna1945information}. The squared Fisher-Rao distance between two smooth positive densities $\rho_0$ and $\rho_1$ is defined as the minimum of
\[
 \int_0^1 \int_{\Om} \frac{\Z(t,x)^2}{\rho(t,x)}\, \d x \d t 
\]
over the couples $(\rho,\Z)$ such that $\D_t \rho = \Z$, $\rho(0,\cdot)=\rho_0$ and $\rho(1,\cdot)=\rho_1$. Its geodesics are explicit and given by $\rho(t,x) = \left[ t\sqrt{\rho_1(x)}+(1-t)\sqrt{\rho_0(x)}\right]^2$. Note that this metric can be rigorously defined on the space of non-negative measures (see~\thref{def FR}), but it is not Riemannian anymore.

The considerations above lead us to the problem of minimizing
\begin{equation}
 \int_0^1 \left[ \int_{\Om} \frac{|\M(t,x)|^2}{2\rho(t,x)} \d x 
 + \delta^2 \int_{\Om} \frac{\Z(t,x)^2}{2\rho(t,x)} \d x\right] \d t
\label{defsmooth}
\end{equation}
under the constraints  
\begin{equation}
 	\D_t \rho + \nabla \cdot \M = \Z,  \quad \rho(0,\cdot)=\rho_0, \quad \rho(1,\cdot)=\rho_1, 
 \label{continuity with source intro}
\end{equation}
where $\delta>0$ is a parameter. 
The metric defined by this infimum as well as the minimizers are the central objects of this paper. While formula \eqref{defsmooth} only makes sense for positive smooth densities, we rigorously extend this definition on the space of non-negative measures in Section \ref{sec:theory}. This extension is well behaved since the functional that is minimized is convex and positively 1-homogeneous, two key properties which allow to extend it readily as a lower semi-continuous (l.s.c.) functional over measures~\cite{bouchitte1990new}.

Let us now give a physical interpretation of this model: just as the first term in the functional measures the kinetic energy of the interpolation $\Vert v \Vert^2_{L^2(\rho)} $ where $v$ is the velocity field, the second term measures an energy of growth $\Vert g \Vert^2_{L^2(\rho)} $ where $g$ denotes the rate of growth $\Z/\rho$ which is a scalar field depending on $t$ and $x$. In this way, our problem can be rewritten as the minimization of
\[
\frac12 \int_0^1 \left[ \int_{\Omega} |v(t,x)|^2 \rho(t,x) \d x  + \delta^2 \int_{\Omega} |g(t,x)|^2 \rho(t,x) \d x \right] \d t
\]
under the constraints
\[
\D_t \rho = g\rho - \nabla \cdot (\rho v) , \quad \rho(0,\cdot) = \rho_0 , \quad  \rho(1,\cdot) = \rho_1\, .
\]
This formulation emphasizes the geometric nature of this model.

\subsection{Previous Works and Connections}
\label{previous works}

The idea of extending optimal transport to unbalanced measures is far from new, and is already suggested by Kantorovitch in 1942 (see~\cite{guittet2002extended}) where he allows mass to be thrown away out of the (compact) domain. The geometry of the domain also plays a role in the more recent models of~\cite{figalli2010new} where the boundary of the domain is taken as an infinite reserve of mass. In the past few years, some authors combined generalizations of optimal transport with numerical algorithms in order to tackle practical applications. In~\cite{benamou2003numerical}, the problem is formulated as an optimal control one where the \emph{matching} term is the $L^2$ distance between densities. The dynamic problem remains convex and is solved numerically with an augmented Lagrangian algorithm.
A quadratic penalization is also suggested in~\cite{OTmaasrumpf} but based, this time, on the dynamic formulation, with the model of metamorphosis is mind: they use constraint  \eqref{continuity with source intro} and add the term $\iint \Z^2 \d x \d t$ in the Benamou-Brenier functional (along with a viscous dissipation term). 
From the perspective of computing image interpolations~\cite{lombardi2013eulerian} suggest, instead of penalizing $\Z$ in the dynamic functional, to fix it as a function of $\rho$, according to some \emph{a priori} on the growth model. Then, if the \emph{a priori} is well chosen, minimizing the Benamou-Brenier functional gives rise to meaningful interpolations. Note that one of our limit models (see \thref{def gBB}) falls into this category.

Now we introduce with more details the classes of distances introduced in \cite{piccoli2014generalized, piccoli2013properties}, along with partial optimal transport distances, as they share some similarities with our approach.

\paragraph{Partial transport and its dynamical formulation.}

Partial optimal transport, introduced in~\cite{caffarelli2010free} consists in transferring a fixed amount of mass between two measures $\rho_0$ and $\rho_1$ as cheaply as possible.
Letting $\Gamma_{\leq(\rho_0,\rho_1)}$ be the set of measures on $\Om \times \Om$ whose left and right marginals are dominated by $\rho_0$ and $\rho_1$ respectively, this amounts to minimizing \eqref{optimal transport problem} under the constraints $\gamma \in \Gamma_{\leq(\rho_0,\rho_1)}$ and $\vert \gamma \vert (\Om \times \Om)=m$. 
What happens is rather intuitive: the plan is divided between an active set, where classical optimal transport is performed, and an inactive set. A theoretical study of this model is done in~\cite{caffarelli2010free} and~\cite{figalli2010optimal}, where for instance results on the regularity of the boundary between the active an inactive sets are proved.
For the quadratic cost, the Lagrangian formulation for this problem~\cite[Corollary 2.11]{caffarelli2010free} reads
\begin{equation}
\inf_{\gamma \in \Gamma_{\leq(\rho_0,\rho_1)}} \int_{\Om \times \Om} \left( \vert x-y\vert^2 -\lambda \right) \d\gamma (x,y) \, .
\label{PTdef}
\end{equation}
where choosing the Lagrange multiplier $\lambda$ amounts to choosing the maximum distance mass can be transported---which is $\sqrt{\lambda}$ ---and consequently, the amount $m$ of mass transported. 

Independently, a class of distances interpolating between total variation and the $W_p$ metrics has been introduced by ~\cite{piccoli2014generalized, piccoli2013properties}, taking their inspiration from the optimal control formulation of~\cite{benamou2001mixed}. 
This distances are defined as the infimum of
\[
W_p (\tilde{\rho}_0,\tilde{\rho}_1) + 
\delta \cdot
(  \vert \rho_0 - \tilde{\rho}_0 \vert + \vert \rho_1 - \tilde{\rho}_1 \vert  )
\]
over $\tilde{\rho}_0, \tilde{\rho}_1$, non-negative measures on $\Om$, for $p \geq 1$. Now remark that, for fixed $\rho_0$ and $\rho_1$ and up to changing $\delta$, the previous problem is equivalent to minimizing
\begin{equation}
W^p_p (\tilde{\rho}_0,\tilde{\rho}_1) + 
\delta^p \cdot
(  \vert \rho_0 - \tilde{\rho}_0 \vert + \vert \rho_1 - \tilde{\rho}_1 \vert  )
\label{PRdef}
\end{equation}
To our knowledge, the equivalence between this model and partial optimal transport has not been exposed yet although this allows an interesting dynamical formulation for the well studied partial transport problem. This is the object of the following proposition.

\begin{proposition}[Equivalence between partial OT and Piccoli-Rossi distances]
\thlabel{equivalence TV partial}
When choosing $2\delta^2=\lambda$ and $p=2$, problems \eqref{PTdef} and \eqref{PRdef} are equivalent.
\end{proposition}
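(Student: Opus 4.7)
The plan is to prove that \eqref{PTdef} and \eqref{PRdef} have the same optimal value up to the additive constant $C \defeq \delta^{2}(|\rho_{0}|+|\rho_{1}|)$, with a natural bijection between their minimizers. The key algebraic identity is the following: for any $\gamma\in\Gamma_{\leq(\rho_{0},\rho_{1})}$ with marginals $\tilde\rho_{0},\tilde\rho_{1}$, the domination $\tilde\rho_{j}\leq\rho_{j}$ forces $|\rho_{j}-\tilde\rho_{j}|=|\rho_{j}|-|\gamma|$, so that substituting $\lambda=2\delta^{2}$ rewrites the Lagrangian cost in \eqref{PTdef} as
\[
\int_{\Om\times\Om} |x-y|^{2}\,\d\gamma \;+\; \delta^{2}\bigl(|\rho_{0}-\tilde\rho_{0}|+|\rho_{1}-\tilde\rho_{1}|\bigr) \;-\; C.
\]

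For the easy direction $\inf\eqref{PTdef}\geq\inf\eqref{PRdef}-C$, I would simply use $\int|x-y|^{2}\,\d\gamma\geq W_{2}^{2}(\tilde\rho_{0},\tilde\rho_{1})$ in the identity above: each admissible $\gamma$ has Lagrangian cost at least $F(\tilde\rho_{0},\tilde\rho_{1})-C$, where $F$ denotes the PR functional in \eqref{PRdef}. Taking the infimum over $\gamma$ yields the bound.

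The reverse inequality is the main obstacle, because in \eqref{PRdef} the measures $\tilde\rho_{j}$ need not satisfy $\tilde\rho_{j}\leq\rho_{j}$. Given such a pair with common mass and an optimal $W_{2}$-plan $\pi$ between them, my strategy is a two-step truncation: first multiply $\pi$ pointwise by $h_{0}(x)\defeq\min(1,\d\rho_{0}/\d\tilde\rho_{0})$ to obtain a sub-plan $\gamma_{*}$ with first marginal $\mu_{0}\defeq\tilde\rho_{0}\wedge\rho_{0}\leq\rho_{0}$ and second marginal $\gamma_{*,1}\leq\tilde\rho_{1}$; then apply the symmetric truncation in the $y$-variable. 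The transport term drops because $\gamma_{*}\leq\pi$. For the TV penalty, writing $\sigma_{0}\defeq\tilde\rho_{0}-\mu_{0}$, the mutual singularity of $\sigma_{0}$ and $\rho_{0}-\mu_{0}$ yields $|\rho_{0}-\mu_{0}|=|\rho_{0}-\tilde\rho_{0}|-|\sigma_{0}|$, while the triangle inequality $|\rho_{1}-\gamma_{*,1}|\leq|\rho_{1}-\tilde\rho_{1}|+|\tilde\rho_{1}-\gamma_{*,1}|$ combined with $|\tilde\rho_{1}-\gamma_{*,1}|=|\sigma_{0}|$ (from the sub-coupling structure) compensates the loss on the un-truncated marginal by exactly $|\sigma_{0}|$. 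Hence $F$ does not increase. Iterating yields a pair $(\tilde\rho_{0}^{*},\tilde\rho_{1}^{*})\leq(\rho_{0},\rho_{1})$ with $F(\tilde\rho_{0}^{*},\tilde\rho_{1}^{*})\leq F(\tilde\rho_{0},\tilde\rho_{1})$; an optimal $W_{2}$-plan between these dominated measures is then admissible for \eqref{PTdef} and turns the identity above into an equality, which concludes the proof.

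The hardest step is precisely this simultaneous three-way control during truncation: the sub-coupling property handles the transport cost effortlessly, but the TV penalty a priori worsens on the marginal that was not truncated, and the whole reduction hinges on the bookkeeping that transfers the mass $|\sigma_{0}|$ from the $0$-side TV term to the $1$-side TV term via the triangle inequality.
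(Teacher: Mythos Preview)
Your proof is correct and follows the same overall structure as the paper's: reduce \eqref{PRdef} to pairs $(\tilde\rho_0,\tilde\rho_1)$ dominated by $(\rho_0,\rho_1)$, after which the algebraic identity you wrote makes the equivalence with \eqref{PTdef} immediate. The difference is only in how this reduction is obtained. The paper simply invokes \cite[Proposition~4]{piccoli2013properties} to assert that the infimum in \eqref{PRdef} is unchanged under the domination constraints, and then proceeds in two lines. You instead prove this reduction from scratch via the two-step truncation of an optimal plan, with the $|\sigma_0|$ bookkeeping showing that the TV penalty does not increase. Your argument is thus more self-contained; the paper's is shorter by outsourcing the key lemma. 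Two minor remarks: ``iterating'' is slightly misleading since exactly two truncations suffice (after the second step both marginals are dominated and stay so), and you might make explicit at the outset that only pairs $(\tilde\rho_0,\tilde\rho_1)$ of equal mass matter in \eqref{PRdef}, since otherwise $W_2^2=+\infty$.
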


\begin{proof}
First, it is clear that problem \eqref{PRdef} is left unchanged when adding the domination constraints $\tilde{\rho}_0\leq \rho_0$ and $\tilde{\rho}_1\leq \rho_1$ (\cite[Proposition 4]{piccoli2013properties}). This implies that $\vert \rho_i-\tilde{\rho_i}\vert$ is equal to $\rho_i(\Om)-\tilde{\rho_i}(\Om)$ in such a way that \eqref{PRdef} can be rewritten as
\[
\inf_{\gamma \in \Gamma_{\leq(\rho_0,\rho_1)}} \int_{\Om \times \Om} \left( \vert x-y\vert^2 \right) \d\gamma (x,y) +
\delta^2
\left(  \rho_0(\Om)+\rho_1(\Om)-2\gamma(\Om\times \Om )\right)
\]
This is clearly equivalent to the Lagrangian formulation \eqref{PTdef}.
\end{proof}

The interest of this result is that it allows a dynamic formulation of the partial optimal transport problem. Indeed, it is shown in~\cite{piccoli2013properties} that minimizing \eqref{PRdef} is equivalent to minimizing the following dynamic functional
\begin{equation}
\label{dynamic PT}
\int_0^1 \left[ \int_{\Om} \frac{|\M(t,x)|^2}{\rho(t,x)} \d x + \delta^2 \int_{\Om} |\Z(t,x)| \d x \right] \d t
\end{equation}
subject to the continuity constraints with a source \eqref{continuity with source intro}.

\paragraph{The case $W_1$.}

When chosing $p=1$ in \eqref{PRdef}, one obtains another partial optimal transport problem, with $|\cdot|$ as a ground metric. This distance was already introduced by~\cite{hanin1992kantorovich} in view of the study of Lipschitz spaces. Without providing a proof, it is quite clear that this distance is also obtained by minimizing
\begin{equation}
\int_0^1 \left[ \int_{\Om} |\M(t,x)| \d x + \delta \int_{\Om} |\Z(t,x)| \d x \right] \d t
\label{eq: Hanin KR}
\end{equation}
over the set set of solutions to \eqref{continuity with source intro}.

\paragraph{A unifying framework: homogeneity and convexity.}

We conclude this bibliographical section by suggesting a unifying framework for a class of generalizations of the $W_p$ metrics. Consider the problem of minimizing
\[
 \int_0^1 \left[ \frac1p \int_{\Om} \frac{|\M|^p}{\rho^{p-1}} \d x + \delta^p \frac1q \int_{\Om} \frac{|\zeta|^q}{\rho^{q-1}}\d x \right] \d t
\]
over the set of solutions to  \eqref{continuity with source intro}. The functional penalizes the $p$-th power of the $L^p_{\rho}$ norm of the velocity field and the $q$-th power of the $L^q_{\rho}$ norm of the rate of growth, the latter being reparametrization invariant. Whatever $p,q\geq 1$, the above functional can be rigorously defined as a homogeneous, convex, l.s.c functional on measures (as in Section \ref{sec:theory}). Also, this problem can be seen as the dual problem of maximizing the variation of an energy
\[
\int_{\Om} \varphi(1,\cdot) \rho_1 - \int_{\Om} \varphi(0,\cdot) \rho_0
\]
over continuously differentiable potentials $\varphi$ on $[0,1]\times \Om$ satisfying
\[
\D_t \varphi + \frac{p-1}{p}|\nabla \varphi |^{p/(p-1)} + \delta^{-p/(q-1)} \frac{q-1}{q}|\varphi|^{q/(q-1)} \leq 0
\]
with some adaptations when $p$ or $q$ takes the value $1$ or $+ \infty$.

For $p=q=1$, we recover \eqref{eq: Hanin KR} and for $p=2$ and $q=1$, we recover \eqref{dynamic PT}, implying the connections to partial optimal transport arising from our previous remarks. While there is an interesting decoupling effect between the growth term and the transport term when $p$ or $q$ is equal to $1$, one should choose in general $p=q\geq1$ so as to define (the $p$-th power of) a metric. In this article, we focus on the case $p=2,\, q=2$, which is the only Riemannian-like metric of this class, namely our interpolated distance between $W_2$ and Fisher-Rao.

\subsection{Relation with~\cite{new2015kondratyev}}

After finalizing this paper, we became aware of the recent work of~\cite{new2015kondratyev} which defines and studies the same metric.  These two works were done simultaneously and independently.  The approach of~\cite{new2015kondratyev} is however quite different.
While both works prove existence of geodesics, the proofs rely on different strategies, and in particular our construction is based on convex analysis.
The use of tools from convex analysis allows us to prove a useful sufficient condition for the uniqueness of a geodesic.
We study the limiting models (generalized transport and Fisher-Rao), through the introduction of the $\delta$ parameter. 
We prove existence and uniqueness of travelling Diracs solutions, while~\cite{new2015kondratyev} provides an informal discussion. 
Lastly, we detail a numerical scheme and show applications to image interpolation. 
Let us stress that~\cite{new2015kondratyev} provides many other contributions (such as a Riemannian calculus, topological properties and the definition of gradient flows) that we do not cover here. 


\subsection{Notations and Preliminaries}

In the sequel, we consider an open bounded convex spatial domain $\Om \subset \R^d$, $d \in \mathbb{N}^*$. The set of Radon measures (resp. positive Radon measures) on $X$ ($X$ is typically $\Om$ or $[0,1]\times \Om$), is denoted by $\mathcal{M}(X)$ (resp. $\mathcal{M}_+(X)$). Endowed with the total variation norm
\[
|\mu|(X) \defeq \sup \left\{ \int_{X} \varphi \d \mu : \varphi \in C(X), \vert \varphi \vert_{\infty} =1 \right\} ,
\]
$\mathcal{M}(X)$ is a Banach space. We identify this space with the topological dual of the space of real valued continuous functions $C(X)$ endowed with the $\sup$ norm topology. Another useful topology on $\mathcal{M}(X)$ is the weak* topology arising from this duality: a sequence of measures $(\mu_n)_{n\in \N}$ weak* converges towards $\mu \in \mathcal{M}(X)$ if and only if for all $\varphi \in C(X)$, $\lim_{n\rightarrow + \infty}\int_{X} \varphi \d\mu_n = \int_{X} \varphi \d\mu$. Any bounded subset of $\mathcal{M}(X)$ (for the total variation) is relatively sequentially compact for the weak* topology.
\\

\noindent We also use the following notations:\\
\indent$\bullet$ $\mu \ll \nu$ means that the measure $\mu$ is absolutely continuous w.r.t. $\nu$;
\\
\indent$\bullet$ for a (possibly vector) measure $\mu$, $|\mu| \in \mathcal{M}_+(X)$ is its variation;
\\
\indent$\bullet$ for a positively homogeneous function $f$, we denote by $f(\mu)$ the measure defined by $f(\mu)(A) = \int_A f(\d\mu/\d\lambda) \d\lambda$ for any measurable set $A$, with $\lambda$ satisfying $|\mu| \ll \lambda$. The homogeneity assumption guarantees that this definition does not depend on $\lambda$;
\\
\indent$\bullet$ $T_{\#} \mu$ is the image measure of $\mu$ through the measurable map $T: X_1 \to X_2$, also called the pushforward measure. It is given by $T_{\#} \mu (A_2) = \mu(T^{-1}(A_2))$;
\\
\indent$\bullet$ $\delta_x$ is a Dirac measure of mass $1$ located at the point $x$;
\\
\indent$\bullet$ $\iota_{\mathcal{C}}$ is the (convex) indicator function of a convex set $\mathcal{C}$ which takes the value $0$ on $\mathcal{C}$ and $+\infty$ everywhere else;
\\
\indent$\bullet$ if $f$ is a convex function on a normed vector space $E$ with values in $]-\infty, + \infty]$, $f^*$ is its dual function in the sense of Fenchel-Legendre duality, i.e.\ for $y$ in the topological dual (or pre-dual) space $E'$, $f^*(y) = \sup_{x\in E} \langle x, y \rangle - f(x) $. The subdifferential of $f$ is denoted $\D f$ and is the set valued map $x \mapsto \{ y\in E' : \forall x' \in E, \, f(x')-f(x) \geq \langle y, x'-x \rangle \}$.
\\

Finally, when $\mu$ is a measure on $[0,1] \times \Om$ which time marginal is absolutely continuous with respect to the Lebesgue measure on $[0,1]$, we denote by $(\mu_t)_{t\in[0,1]}$ the ($\d t$-a.e.\ uniquely determined) family of measures satisfying $\mu = \int_0^1 (\mu_t \otimes \delta_t ) \d t$, where $\otimes$ denotes the product of measures. This is a consequence of the disintegration theorem (see for instance ~\cite[Theorem 5.3.1]{ambrosio2006gradient}) which we frequently use without explicitly mentioning it. In order to alleviate notations, we replace the integral expression of $\mu$ by $\mu = \mu_t \otimes \d t$.



\section{Definition and Existence of Geodesics}
\label{sec:theory}

In order to prove that the minimum of the variational problem is attained, rather than using the direct method of calculus of variation, we choose to take advantage of the convex nature of our problem and prove it by Fenchel-Rockafellar duality. Indeed, we show that (a generalization of) problem  \eqref{defsmooth} is obtained by considering the dual of a variational problem on the space of differentiable functions. This point of view allows to kill two birds with one shot: proving the existence of geodesics and exhibiting sufficient optimality conditions.

\subsection{Definition of the Interpolating Distance}
\label{subsec:average}

Consider the closed convex set
\[
B_{\delta} \defeq \left\{ (a,b,c)\in \R \times \R^d \times \R : a+\frac{1}{2}\left( |b|^2+\frac{c^2}{\delta^2} \right) \leq 0 \right\}
\]
and its convex indicator function
\[
\iota_{B_{\delta}} : (a,b,c) \in \R^{d+2} \mapsto 
\begin{cases}
0 & \text{if } (a,b,c) \in B_{\delta} \\
+ \infty & \text{otherwise.}
\end{cases}
\]
We denote by $\fonc$ the Fenchel-Legendre conjugate of $\iota_{B_{\delta}}$, i.e.
\[
\fonc : (x,y,z)\in \R\times\R^d\times\R \mapsto
\begin{cases}
\frac{\vert y \vert^2 + \delta^2 z^2 }{2x} &\text{if $x>0$,}\\
0 & \text{if } (x,|y|,z)=(0,0,0)\\
+\infty & \text{otherwise}
\end{cases}
\]
which is by construction proper, convex, lower semicontinuous (l.s.c.) and homogeneous. One can check by direct computations that
\begin{equation}
\label{subdiff fonctional}
\D \fonc (x,y,z) =
\begin{cases}
\left( -\frac{|y|^2+\delta^2 z^2}{2x^2}, \frac{y}{x},\frac{\delta^2 z}{x} \right)& \text{if $x>0$,}\\
B_{\delta}& \text{if $(x,|y|,z)=(0,0,0)$,}\\
\emptyset & \text{otherwise}.
\end{cases}
\end{equation}
We denote $\mathcal{M} \defeq \mathcal{M}([0,1]\times \Om)$ and for $\mu=(\rho,\M,\Z) \in \mathcal{M}\times \mathcal{M}^d \times \mathcal{M}$ we define the convex functional
\[
\ifonc(\mu) \defeq \int_{[0,1] \times \Om} \fonc\left(\frac{\d\mu}{\d\lambda}\right) \d\lambda
\]
where $\lambda$ is any non-negative Borel measure satisfying $\vert \mu \vert \ll \lambda$. As $\fonc$ is homogeneous, the definition of $\ifonc$ does not dependent of the reference measure $\lambda$.

\begin{definition}[Continuity equation with source]
\thlabel{continuity equation}
We denote by $\ccons$ the affine subset of $\mathcal{M}\times \mathcal{M}^d \times \mathcal{M}$ of triplets of measures $\mu=(\rho,\M,\Z)$ satisfying the continuity equation $\D_t \rho + \nabla \cdot \M = \Z$ weakly, interpolating between $\rho_0$ and $\rho_1$ and satisfying homogeneous Neumann boundary conditions. This is equivalent to requiring
\begin{equation}
\label{continuity weak}
\int_0^1 \int_{\Om} \D_t \varphi \ \d\rho + \int_0^1 \int_{\Om} \nabla \varphi \cdot \d\M + \int_0^1 \int_{\Om} \varphi \ \d\Z = \int_{\Om} \varphi(1,\cdot)\d\rho_1 - \int_{\Om} \varphi(0,\cdot)\d\rho_0
\end{equation}
for all $\varphi \in C^1([0,1]\times \bar{\Om})$.
\end{definition}
\begin{remarks}\mbox{}
\begin{itemize}
\item The set $\ccons$ is not empty: it contains for instance the linear interpolation $(\rho,0,\D_t \rho)$ with $\rho = (t \rho_1 + (1-t) \rho_0 ) \otimes \d t$. Moreover, if we assume that $\rho_0$ and $\rho_1$ have equal mass, then there exists $(\rho,\M,\Z) \in \ccons$ such that $\zeta =0$. This is a consequence of \cite[Theorem 8.3.1]{ambrosio2006gradient}.
\item If we make the additional assumption that $\M\ll\rho$ and $\Z \ll \rho$ (which is satisfied as soon as $\ifonc(\mu)$ is finite), we find that the time marginal of $\mu$ is absolutely continuous w.r.t.\ the Lebesgue measure, allowing to disintegrate it in time.
\item Under the same assumptions, $\mu$ admits a continuous representative, i.e.\ it is $\d t-$a.e.\ equal to a curve which is weak* continuous in time. 
\item When $\rho$ is disintegrable in time, the map $t \mapsto \rho_t(\Om) = \int_{\Om} \rho_t$ admits the distributional derivative $\rho'_t (\Om) = \zeta_t(\Om)$, for almost every $t\in[0,1]$. Indeed, taking a test function $\varphi$ constant in space in \eqref{continuity weak} gives
\[
\int_0^1 \varphi'(t) \rho_t(\Om)\d t + \int_0^1 \varphi(t) \Z_t(\Om)\d t = \varphi(1)\rho_1(\Om) -\varphi(0)\rho_0(\Om)\, .
\]
\end{itemize}
\end{remarks}
We are now in position to define the central object of this paper $\WF_{\delta}$, as a map $\mathcal{M}_+(\Om) \times \mathcal{M}_+(\Om) \rightarrow \R_+$.
\begin{definition}[Interpolating distance]
\thlabel{interpolating distance} 
For $\rho_0$ and $\rho_1$ in $\mathcal{M}_+(\Om) \times \mathcal{M}_+(\Om)$, the (squared) interpolating distance is defined as
\begin{equation*}
\label{dual}
\WF_{\delta}^2(\rho_0,\rho_1) \defeq \inf_{\mu \in \ccons} \ifonc (\mu) \, . \tag{$\mathcal{P}_{\mathcal{M}}$}
\end{equation*}
\end{definition}

The following result shows that $\WF_{\delta}(\rho_0,\rho_1)$ is always finite. 

\begin{proposition}[Bound on the distance]
\thlabel{bounddistance}
Let $\rho_0$ and $\rho_1$ be in $\mathcal{M}_+(\Om)$. The following bounds hold:
\begin{align*}
\WF^2_{\delta}(\rho_0,\rho_1) & \leq 2\delta^2 \vert (\sqrt{\rho_1} -\sqrt{\rho_0})^2 \vert(\Om) \\
&\leq 2\delta^2(\rho_0(\Om)+\rho_1(\Om)).
\end{align*}
\end{proposition}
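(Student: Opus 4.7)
The strategy is to exhibit an explicit admissible triple $(\rho,\omega,\zeta)\in\ccons$ with $\omega=0$, modelled on the closed-form Fisher-Rao geodesic mentioned just after the definition of the Fisher-Rao distance in the introduction, and then to read off the two bounds from its cost. Since $\WF_\delta^2$ is defined as an infimum, any such candidate yields an upper bound, so no lower-bound argument is needed.

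To make this rigorous at the level of measures, I would fix a common reference measure $\lambda \defeq \rho_0+\rho_1 \in \mathcal{M}_+(\Om)$ and write $\rho_i = u_i\,\lambda$ with $u_i \in [0,1]$ $\lambda$-a.e. Set
\[
 s(t,x) \defeq t\sqrt{u_1(x)}+(1-t)\sqrt{u_0(x)},
\]
and define the candidate triple $\mu=(\rho,\omega,\zeta)$ by
\[
 \rho \defeq s^2\,(\d t\otimes\lambda),\qquad \omega\defeq 0,\qquad \zeta\defeq 2\,s\,(\sqrt{u_1}-\sqrt{u_0})\,(\d t\otimes\lambda).
\]
The first task is to check that $\mu\in\ccons$. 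Taking any test function $\varphi\in C^1([0,1]\times\bar\Om)$, Fubini and the elementary identity $\partial_t(\varphi s^2)=\partial_t\varphi\cdot s^2+\varphi\cdot 2s(\sqrt{u_1}-\sqrt{u_0})$ give
\[
 \int_0^1\!\!\int_\Om\partial_t\varphi\,\d\rho+\int_0^1\!\!\int_\Om\varphi\,\d\zeta
 =\int_\Om\bigl[\varphi(1,\cdot)s(1,\cdot)^2-\varphi(0,\cdot)s(0,\cdot)^2\bigr]\d\lambda,
\]
which equals the right-hand side of \eqref{continuity weak} since $s(0,\cdot)^2=u_0$ and $s(1,\cdot)^2=u_1$ $\lambda$-a.e.; the $\omega$ term vanishes identically. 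Thus the continuity equation with source and the boundary conditions hold.

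The cost is then a direct computation. Using $\d t\otimes\lambda$ as reference measure, the densities of $\rho$, $\omega$, $\zeta$ are $s^2$, $0$ and $2s(\sqrt{u_1}-\sqrt{u_0})$ respectively, so on $\{s>0\}$
\[
 \fonc\!\left(s^2,\,0,\,2s(\sqrt{u_1}-\sqrt{u_0})\right)=\frac{\delta^2\cdot 4s^2(\sqrt{u_1}-\sqrt{u_0})^2}{2s^2}=2\delta^2(\sqrt{u_1}-\sqrt{u_0})^2,
\]
while on $\{s=0\}$ we also have $\zeta=0$, so the triple is $(0,0,0)$ and $\fonc=0$. Integrating and recalling that for a $1$-homogeneous function $f$ the measure $f(\rho_0,\rho_1)$ is defined via any dominating $\lambda$ (and that $(a,b)\mapsto(\sqrt{b}-\sqrt{a})^2$ is $1$-homogeneous), one obtains
\[
 \ifonc(\mu)=2\delta^2\int_\Om(\sqrt{u_1}-\sqrt{u_0})^2\,\d\lambda=2\delta^2\bigl\vert(\sqrt{\rho_1}-\sqrt{\rho_0})^2\bigr\vert(\Om),
\]
yielding the first inequality. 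The second follows from the pointwise bound $(\sqrt{b}-\sqrt{a})^2=a+b-2\sqrt{ab}\le a+b$ applied $\lambda$-a.e. to $(u_0,u_1)$ and integrated.

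There is no real obstacle here; the only mild subtlety is the measure-theoretic interpretation of $(\sqrt{\rho_1}-\sqrt{\rho_0})^2$, which is handled by the $1$-homogeneity convention recalled in the Notations section, and the degenerate locus $\{s=0\}$, on which $\zeta$ vanishes automatically so that $\fonc$ takes its admissible value $0$.
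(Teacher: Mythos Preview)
Your proof is correct and follows essentially the same approach as the paper: exhibit the Fisher-Rao geodesic $\rho_t=(t\sqrt{\rho_1}+(1-t)\sqrt{\rho_0})^2$ with $\omega=0$ as an admissible triple in $\ccons$, compute its cost, and bound it. You are simply more explicit than the paper about the choice of reference measure $\lambda=\rho_0+\rho_1$, the verification of the weak continuity equation, and the degenerate set $\{s=0\}$, whereas the paper merely states that the triple belongs to $\ccons$ and writes down the resulting value of $\ifonc$.
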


\begin{proof}
Consider the triplet of measures $\mu=(\rho,\M,\Z)$ with
\[
\begin{cases}
\rho =  \left( t\sqrt{\rho_1}+(1-t)\sqrt{\rho_0} \right)^2  \otimes \d t \, ,\\
\M = 0\, , \\
\Z = 2(\sqrt{\rho_1}-\sqrt{\rho_0})(t\sqrt{\rho_1}+(1-t)\sqrt{\rho_0}) \otimes \d t  ,
\end{cases}
\]
and notice that it belongs to $\ccons$.  Taking $\lambda \in \mathcal{M}_+$ such that $\mu \ll \lambda$, the first bound comes from
 \[
 \ifonc(\mu)= 2 \delta^2 \int_{\Om} \left( \sqrt{\frac{\d\rho_1}{\d\lambda}} -  \sqrt{\frac{\d\rho_0}{\d\lambda}}\right)^2 \d\lambda
 \]
 and does not depend on $\lambda$ by homogeneity, while the second bound is obtained in the worst case when the supports are disjoint.
 \end{proof}

\begin{remark}
Notice that the first bound is tight and is actually the Fisher-Rao metric, defined in \thref{def FR}. It is the solution of \eqref{dual}  when adding the constraint $\M=0$ (see \thref{uniqueness FR}).
\end{remark}

\subsection{Existence of Geodesics}
We now prove the existence of geodesics. The proof mainly uses the Fenchel-Rockafellar duality theorem---which can be found in its canonical form along with its proof in \cite[Theorem 1.9]{cedric2003topics}---and a duality result for integrals of convex functionals.

\begin{definition}[Geodesics]
Geodesics are measures $\rho \in \mathcal{M}_+$ for which there exists a momentum $\M$ and a source $\Z$ such that $(\rho,\M,\Z)$ is a minimizer of \eqref{dual}.
\end{definition}

\begin{definition}[Primal problem] We introduce a variational problem on the space of differentiable functions which is defined as follows
\thlabel{def: primal}
\begin{equation*}
\label{primal}
\inf_{\varphi \in C^1([0,1] \times \bar{\Om})} J(\varphi) \defeq \int_0^1 \int_{\Om} \iota_{B_{\delta}}(\D_t \varphi, \nabla \varphi, \varphi) + \int_{\Om} \varphi(0,\cdot)\d\rho_0 - \int_{\Om} \varphi(1,\cdot) \d\rho_1 \, .
\tag{$\mathcal{P}_{C^1}$}
\end{equation*}
\end{definition}
\begin{theorem}[Strong duality and existence of geodesics]
\thlabel{existence}
Let $\rho_0$ and $\rho_1$ in $\mathcal{M}_+(\Om)$. It holds
\[
\WF_{\delta}(\rho_0,\rho_1)^2 = -\inf_{\varphi \in C^1([0,1] \times \bar{\Om})} J(\varphi)
\] 
and the infimum in \eqref{dual} is attained.
\end{theorem}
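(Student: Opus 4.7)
The plan is to apply the Fenchel--Rockafellar duality theorem (as stated in Villani, Theorem 1.9, which the paper cites) to the primal problem $(\mathcal{P}_{C^1})$, and recover $(\mathcal{P}_{\mathcal{M}})$ as its dual. Write $J = F + H \circ \Lambda$ where $\Lambda : C^1([0,1]\times\bar\Omega) \to C([0,1]\times\bar\Omega)^{d+2}$ is the bounded linear operator $\Lambda\varphi = (\partial_t \varphi, \nabla \varphi, \varphi)$, where $H(a,b,c) = \int_0^1\!\int_\Omega \iota_{B_\delta}(a,b,c)\,dt\,dx$ is a convex integral functional, and where $F(\varphi) = \int_\Omega \varphi(0,\cdot)\,d\rho_0 - \int_\Omega \varphi(1,\cdot)\,d\rho_1$ is a continuous linear form.

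The duality formula reads
\[
\inf_{\varphi \in C^1} \bigl[F(\varphi) + H(\Lambda\varphi)\bigr] \;=\; \max_{\mu \in \mathcal{M}^{d+2}} \bigl[-F^*(-\Lambda^*\mu) - H^*(\mu)\bigr],
\]
so the main task is to identify both conjugates. For the linear part, $F^*(-\Lambda^*\mu)$ is the indicator of the equation $\Lambda^*\mu = \rho_0\otimes\delta_0 - \rho_1\otimes\delta_1$; a direct integration by parts shows that this equation, written out for $\mu = (\rho,\omega,\zeta)$, is exactly the weak continuity equation \eqref{continuity weak}, i.e.\ the constraint $\mu \in \mathcal{C}_{\rho_0,\rho_1}$. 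For the integral part, the Fenchel conjugate of an integral of a positively $1$-homogeneous convex l.s.c.\ integrand against Lebesgue measure is computed, on the dual space of measures, by the Bouchitté--Buttazzo representation theorem; since $\fonc$ is the Legendre transform of $\iota_{B_\delta}$, this yields $H^*(\mu) = \mathcal{F}_\delta(\mu)$, independent of the dominating measure. Combining these, the dual problem is $-\inf_{\mu\in\mathcal{C}_{\rho_0,\rho_1}} \mathcal{F}_\delta(\mu) = -\WF_\delta^2(\rho_0,\rho_1)$, which gives the claimed equality.

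To legitimately invoke Fenchel--Rockafellar with attainment of the maximum on the dual side, a qualification condition is needed: namely, we must exhibit a $\varphi_0 \in C^1$ at which $F$ is finite and $H\circ\Lambda$ is continuous (equivalently, at which $\Lambda\varphi_0$ lies uniformly in the interior of $B_\delta$). The choice $\varphi_0(t,x) = -\delta^2 t$ does the job: we have $\Lambda\varphi_0 = (-\delta^2, 0, -\delta^2 t)$ and $a + \tfrac12(|b|^2 + c^2/\delta^2) = -\delta^2 + \tfrac12 \delta^2 t^2 \le -\delta^2/2 < 0$ uniformly on $[0,1]\times\bar\Omega$, so any sup-norm perturbation small enough keeps $H = 0$. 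This ensures both strong duality and existence of a maximizer $\mu^\star = (\rho^\star,\omega^\star,\zeta^\star) \in \mathcal{C}_{\rho_0,\rho_1}$, which is then a minimizer of $(\mathcal{P}_{\mathcal{M}})$, i.e.\ a geodesic.

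The main obstacle is the identification $H^* = \mathcal{F}_\delta$ on the full space of vector Radon measures rather than only on the absolutely continuous ones. The positive $1$-homogeneity of $\fonc$ is what rescues us: it lets the integral against an arbitrary reference measure be taken consistently in the sense recalled in the Notations, and it is precisely the hypothesis of the Bouchitté--Buttazzo/Rockafellar duality theorem for integral functionals of measures. A secondary technical point is the identification of $\Lambda^*$ with the distributional $(-\partial_t, -\nabla\cdot, \mathrm{Id})$, which is immediate from the definition of the weak continuity equation but deserves to be written out to confirm that the boundary traces $\rho_0\otimes\delta_0$ and $\rho_1\otimes\delta_1$ are produced correctly.
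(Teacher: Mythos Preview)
Your proposal is correct and follows essentially the same route as the paper: decompose $J$ as a linear term plus a convex integral functional composed with the linear operator $\varphi \mapsto (\partial_t\varphi,\nabla\varphi,\varphi)$, check a qualification condition by exhibiting a $\varphi$ whose image lies in the interior of $B_\delta$, apply Fenchel--Rockafellar, identify the conjugate of the linear term with the indicator of the continuity constraint, and identify the conjugate of the integral term with $\ifonc$ via a duality result for integral functionals on measures. The only cosmetic differences are that the paper cites Rockafellar~\cite{rockafellar1971integrals} (using the recession-function formulation, with $\fonc^\infty=\fonc$ by homogeneity) rather than Bouchitt\'e--Buttazzo, and does not write down an explicit qualifying $\varphi_0$ as you do.
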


\begin{proof}
First remark that $\WF_{\delta}(\rho_0,\rho_1)$ is necessarily finite from \thref{bounddistance}. Let us rewrite \eqref{primal} as 
\[
\inf_{\varphi \in C^1([0,1] \times \bar{\Om})} F(A\varphi) + G(\varphi)
\]
where
\begin{align*}
A : \varphi &\mapsto (\D_t \varphi, \nabla \varphi, \varphi)\, , \\
F :(\alpha,\beta,\gamma) &\mapsto \int_0^1 \int_{\Om} \iota_{B_{\delta}}(\alpha(t,x),\beta(t,x),\gamma(t,x)) \d x \d t \, , \\
 \text{and} \quad G : \varphi &\mapsto \int_{\Om} \varphi(0,\cdot)\d\rho_0 - \int_{\Om} \varphi(1,\cdot) \d\rho_1 .
\end{align*}
Remark that $F$ and $G$ are convex, proper and lower-semicontinuous. It is easy to find a function $\varphi \in C^1([0,1]\times \bar{\Om})$ such that $F$ is continuous at $A\varphi$, taking $A\varphi(t,x)$ in the interior of the closed set $B_{\delta}$ for all $(t,x)$ is enough. The Fenchel-Rockafellar duality theorem garantees the following equality
\begin{equation}
\inf_{\varphi \in C^1([0,1] \times \bar{\Om})} J(\varphi) = \max_{(\mu)\in \mathcal{M} \times \mathcal{M}^{d} \times \mathcal{M}} 
\left\{ -F^*(\mu)-G^*(-A^*(\mu)) \right\}.
\label{fenchelrocka}
\end{equation}
In the second term, we recognize the continuity constraint
\begin{align*}
G^*(-A^*\mu) &= \sup_{\varphi \in C^1([0,1]\times \Om)} \left\{ - \langle A\varphi, \mu \rangle    +\int_{\Om} \varphi(1,\cdot)\d\rho_1 - \int_{\Om} \varphi(0,\cdot) \d\rho_0 \right\}  \\
 &= 
\begin{cases}
0 & \text{ if $\mu \in \ccons$}\\
+\infty & \text{otherwise}.
\end{cases}
\end{align*}
For the first term, as the Fenchel conjugate of $\iota_{B_{\delta}}$ is $\fonc$, we can apply \cite[Theorem 5]{rockafellar1971integrals}. This theorem gives an integral representation for the conjugate of integral functionals by means of the recession function $f^{\infty}$, defined as $f^{\infty}(z) \defeq \sup_t f(x+tz)/t$, for $x$ such that $f(x)<+\infty$. More precisely,
\[
F^*(\mu) = \int_0^1 \int_{\Om} \fonc \left( \frac{\d\mu}{d\mathscr{L}} \right) \d \mathscr{L}+
\int_0^1 \int_{\Om} \fonc^{\infty} \left( \frac{\d\mu}{d|\mu^S|} \right) \d|\mu^S|
\]
where $\mathscr{L}$ is the Lebesgue measure, $\mu^S$ is any singular measure which dominates the singular part of $\mu$ and $\fonc^{\infty}$ is $\fonc$ itself by homogeneity. Finally we obtain $F^*(\mu)=\ifonc(\mu)$, which allows to recognize the opposite of \eqref{dual} in the right-hand side of \eqref{fenchelrocka}. The fact that the minimum is attained is part of the Fenchel-Rockafellar theorem.
\end{proof}

The following Theorem shows that we have defined a metric and shows two useful formulas. 

\begin{theorem}
\thlabel{WF defines a metric}
$\WF_{\delta}$ defines a metric on $\mathcal{M}_+(\Om)$. Moreover, we have the equivalent characterizations, by changing the time range
\begin{equation}
\label{alternative1}
\WF_{\delta}(\rho_0,\rho_1) = \left\{ \inf_{\mu \in \mathcal{CE}_0^{\tau}(\rho_0,\rho_1)} \tau  \int_{[0,\tau] \times \Om} \fonc\left(\frac{\d\mu}{\d\lambda}\right) \d\lambda \right\}^{\frac12} 
\end{equation}
with $\tau > 0$, and by disintegrating $\mu$ in time:
\begin{equation}
\label{alternative2}
\WF_{\delta}(\rho_0,\rho_1)= \inf_{\mu \in \mathcal{CE}_0^1(\rho_0,\rho_1)} \int_0^1 \left\{ \int_{\Om} \fonc\left( \frac{\d\mu_t}{\d\lambda_t}\right) \d\lambda_t \right\}^{\frac12} \d t
\end{equation}
where, as usual, we choose $(\lambda_t)_{t\in [0,1]}$ such that for all $t \in [0,1]$, $\mu_t \ll \lambda_t$.
\end{theorem}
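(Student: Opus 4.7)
The overall plan is to treat this as a standard exercise in the length–space viewpoint on a dynamic transport problem, where all ingredients hinge on the 2-homogeneity of $\fonc$ in the drift/source variables (i.e.\ $\fonc(a,tb,tc)=t^2\fonc(a,b,c)$ when $a>0$). I would derive the reparametrization formula \eqref{alternative1} first, then deduce the metric axioms from it, and finally obtain the length formula \eqref{alternative2} from a Cauchy–Schwarz argument combined with reparametrization by arclength.

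\medskip

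\textbf{Step 1: the rescaling formula \eqref{alternative1}.} Given $\tau>0$ and $\mu=(\rho,\M,\Z)\in\mathcal{CE}_0^{1}(\rho_0,\rho_1)$, I would define $\tilde\mu\in\mathcal{CE}_0^{\tau}(\rho_0,\rho_1)$ by
\[
\tilde\rho = \rho_{s/\tau}\otimes\d s,\qquad
\tilde\M = \tfrac1\tau\,\M_{s/\tau}\otimes\d s,\qquad
\tilde\Z = \tfrac1\tau\,\Z_{s/\tau}\otimes\d s,
\]
on $[0,\tau]\times\Om$ (when $\ifonc(\mu)<\infty$ the disintegrations exist; otherwise the identity is trivial). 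A direct check in the continuity equation shows $\tilde\mu\in\mathcal{CE}_0^\tau$, and the 2-homogeneity of $\fonc$ in $(b,c)$ together with the change of variables $s=\tau t$ yields $\tau\,\ifonc^\tau(\tilde\mu)=\ifonc(\mu)$. The inverse rescaling gives the converse inequality, proving \eqref{alternative1}.

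\medskip

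\textbf{Step 2: metric axioms.} Nonnegativity and the bound $\WF_\delta(\rho_0,\rho_1)<\infty$ are already given by \thref{bounddistance}. For symmetry, the time reversal $\mu\mapsto((\mathrm{rev})_\#\rho,-(\mathrm{rev})_\#\M,-(\mathrm{rev})_\#\Z)$ with $\mathrm{rev}(t,x)=(1-t,x)$ preserves $\mathcal{CE}_0$ (up to swapping endpoints) and leaves $\ifonc$ invariant since $\fonc$ depends only on $|b|^2$ and $c^2$. For identity of indiscernibles, if $\WF_\delta(\rho_0,\rho_1)=0$, attainment in \thref{existence} gives a minimizer with $\ifonc(\mu)=0$; taking any reference measure $\lambda\gg|\mu|$, the integrand vanishes $\lambda$-a.e., forcing the densities of $\M$ and $\Z$ to be zero, so $\M=\Z=0$ and the continuity equation gives $\rho_0=\rho_1$. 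The converse uses the constant curve. For the triangle inequality, given $\rho_0,\rho_1,\rho_2$ and $\epsilon>0$, I pick $\mu^i$ on $[0,1]$ with $\ifonc(\mu^i)\le\WF_\delta^2(\rho_{i-1},\rho_i)+\epsilon$ for $i=1,2$, rescale them via Step 1 to $[0,\tau_1]$ and $[\tau_1,\tau_1+\tau_2]$, and concatenate (the continuity equation persists across the junction because both pieces glue continuously in time thanks to the weak$^*$-continuous representative noted after \thref{continuity equation}). Applying \eqref{alternative1} on the total interval of length $\tau_1+\tau_2$ and minimizing over $(\tau_1,\tau_2)$ yields the classical inequality $(\sqrt a+\sqrt b)^2$, hence $\WF_\delta(\rho_0,\rho_2)\le\WF_\delta(\rho_0,\rho_1)+\WF_\delta(\rho_1,\rho_2)$ after letting $\epsilon\to 0$.

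\medskip

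\textbf{Step 3: the length formula \eqref{alternative2}.} Denote $\ell(t)\defeq\bigl(\int_\Om \fonc(\d\mu_t/\d\lambda_t)\,\d\lambda_t\bigr)^{1/2}$. The easy direction is Cauchy–Schwarz in time: for any admissible $\mu$,
\[
\Bigl(\int_0^1 \ell(t)\,\d t\Bigr)^{2} \;\le\; \int_0^1 \ell(t)^2\,\d t \;=\; \ifonc(\mu),
\]
so the right-hand side of \eqref{alternative2} is bounded above by $\WF_\delta(\rho_0,\rho_1)$. For the reverse inequality, equality in Cauchy–Schwarz holds iff $\ell$ is constant, which suggests a reparametrization by arclength: given any admissible $\mu$ with $L\defeq\int_0^1\ell\,\d t>0$, set $s(t)=L^{-1}\int_0^t\ell(r)\,\d r$ and let $t(s)$ be its inverse; define $\tilde\rho_s=\rho_{t(s)}$, $\tilde\M_s=t'(s)\M_{t(s)}$, $\tilde\Z_s=t'(s)\Z_{t(s)}$. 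This still satisfies the continuity equation, and the homogeneity of $\fonc$ together with $t'(s)=L/\ell(t(s))$ yields after a change of variable $\ifonc(\tilde\mu)=L^{2}=(\int_0^1\ell\,\d t)^{2}$. Hence $\WF_\delta(\rho_0,\rho_1)\le\int_0^1\ell(t)\,\d t$, and taking the infimum gives the reverse bound. The case $L=0$ forces $\rho_0=\rho_1$ by the continuity equation, so it is vacuous.

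\medskip

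\textbf{Expected main obstacle.} The routine but technical point is to justify the measure-theoretic reparametrizations in Steps 1 and 3: one has to check that $\tilde\mu$ is a well-defined triple of Radon measures solving the continuity equation, that the disintegrations $\mu_t$ exist whenever the cost is finite (which is ensured by the remarks after \thref{continuity equation}), and that the function $\ell$ is measurable—all standard but requiring care. The possible vanishing of $\ell(t)$ on a set of positive measure must also be handled, but it only adds a harmless approximation argument since one can always perturb by the trivial stationary competitor.
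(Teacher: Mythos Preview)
Your proposal is correct and follows essentially the same strategy as the paper: the time-rescaling for \eqref{alternative1}, time-reversal for symmetry, and concatenation for the triangle inequality are all handled the same way, and your Cauchy--Schwarz plus arclength-reparametrization argument for \eqref{alternative2} is precisely what the paper defers to by citing \cite[Theorem~5.4]{dolbeault2009new}. The only cosmetic difference is that the paper derives the triangle inequality directly from the length formula \eqref{alternative2} (additivity of $\int\ell$ under concatenation), whereas you obtain it from \eqref{alternative1} via the elementary optimization $\min_{\tau_1,\tau_2}(\tau_1+\tau_2)(a/\tau_1+b/\tau_2)=(\sqrt a+\sqrt b)^2$; both routes are standard and equivalent, and you additionally spell out the identity of indiscernibles, which the paper leaves implicit.
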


\begin{remark}
The difference between minimizers of \eqref{dual} and \eqref{alternative2} is that for the former, the integrand is constant in time (as a consequence of  \eqref{alternative2}), i.e.\ minimizers of \eqref{dual} satisfy for almost every $t\in[0,1]$
\[
\WF_{\delta}(\rho_0,\rho_1) = \int_{\Om} \fonc\left( \frac{\d\mu_t}{\d\lambda_t} \right) \d\lambda_t
\]
while this is not necessarily the case for minimizers of \eqref{alternative2}.
\end{remark}

\begin{proof}
We obtain characterization \eqref{alternative1}, by remarking that if $T:t \mapsto \tau \cdot t$ is a time scaling and $\mu=(\rho,\M,\Z) \in \ccons$ then $\tilde{\mu}=(T_{\#} \rho, \frac{1}{\tau}T_{\#} \M, \frac{1}{\tau} T_{\#} \Z) \in \mathcal{CE}_0^{\tau} (\rho_0, \rho_1)$. For the proof of \eqref{alternative2}, the arguments  of \cite[Theorem 5.4]{dolbeault2009new} apply readily. Let us now prove that $\WF_{\delta}$ defines a metric. By definition, it is non-negative and it is symmetric because the functional satisfies $\ifonc(\rho,\M,\Z)=\ifonc(\rho,-\M,-\Z)$ so time can be reversed leaving $\ifonc$ unchanged. Finally, the triangle inequality comes from characterization \eqref{alternative2} and the fact that the continuity equation is stable by concatenation in time i.e.\ for $\tau \in ]0,1[$, if $\mu_1\in \mathcal{CE}_0^{\tau} (\rho_0, \rho_{\tau})$ and $\mu_2\in \mathcal{CE}_{\tau}^1 (\rho_{\tau}, \rho_1)$ then $\mu_1 1_{[0,\tau]}+\mu_2 1_{[\tau,1]} \in \ccons$. 
\end{proof}


\begin{proposition}[Covariance by mass rescaling]
\thlabel{mass rescaling}
Let $\alpha >0$. If $\rho$ is a geodesic for $\WF_{\delta}(\rho_0,\rho_1)$ then $\alpha \rho$ is a geodesic for $\WF_{\delta}(\alpha \rho_0,\alpha \rho_1)$ and 
\[
\WF_{\delta}(\alpha \rho_0,\alpha \rho_1)=\sqrt{\alpha} \WF_{\delta}(\rho_0,\rho_1)
\]
\end{proposition}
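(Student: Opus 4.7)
The plan is to exploit the positive $1$-homogeneity of $\fonc$ together with the linearity of the continuity equation. These two facts together show that scaling the triple $(\rho,\M,\Z)$ by $\alpha$ induces a one-to-one correspondence between admissible paths for the two pairs of endpoints, and scales the functional $\ifonc$ by $\alpha$.

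More precisely, first I would observe that $\fonc$ is positively $1$-homogeneous by direct inspection of its definition, and therefore so is $\ifonc$: for any reference measure $\lambda$ with $|\mu| \ll \lambda$, one has $\d(\alpha\mu)/\d\lambda = \alpha\, \d\mu/\d\lambda$, hence $\fonc(\d(\alpha\mu)/\d\lambda) = \alpha\, \fonc(\d\mu/\d\lambda)$, and integrating gives $\ifonc(\alpha\mu) = \alpha\, \ifonc(\mu)$. Second, the weak formulation \eqref{continuity weak} is linear in $(\rho,\M,\Z)$ and in the endpoints, so the map $\mu = (\rho,\M,\Z) \mapsto \alpha\mu = (\alpha\rho,\alpha\M,\alpha\Z)$ is a bijection between $\ccons(\rho_0,\rho_1)$ and $\ccons(\alpha\rho_0,\alpha\rho_1)$, with inverse given by scaling by $1/\alpha$.

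Combining these two facts yields
\[
\WF_\delta^2(\alpha\rho_0,\alpha\rho_1) = \inf_{\mu \in \ccons(\rho_0,\rho_1)} \ifonc(\alpha\mu) = \alpha \inf_{\mu \in \ccons(\rho_0,\rho_1)} \ifonc(\mu) = \alpha\, \WF_\delta^2(\rho_0,\rho_1),
\]
which gives the claimed identity after taking square roots. Moreover, since the infima correspond term by term under the bijection $\mu \mapsto \alpha\mu$, minimizers are mapped to minimizers: if $(\rho,\M,\Z)$ attains the infimum for the endpoints $(\rho_0,\rho_1)$, then $(\alpha\rho,\alpha\M,\alpha\Z)$ attains it for $(\alpha\rho_0,\alpha\rho_1)$, so $\alpha\rho$ is a geodesic between $\alpha\rho_0$ and $\alpha\rho_1$. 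There is no serious obstacle here; the only point that requires a moment of care is to verify that the $1$-homogeneity of $\fonc$ passes to the measure-theoretic functional $\ifonc$ independently of the chosen dominating measure $\lambda$, which is already built into the construction from Section~\ref{subsec:average}.
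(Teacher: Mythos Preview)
Your proof is correct and follows the same approach as the paper: the paper's own proof is a single sentence, ``It is a direct consequence of the homogeneity of $\ifonc$,'' and you have simply unpacked the details (the $1$-homogeneity of $\fonc$ passing to $\ifonc$, and the linearity of the continuity constraint giving a bijection of admissible triples).
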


\begin{proof}
It is a direct consequence of the homogeneity of $\ifonc$.
\end{proof}

\subsection{Sufficient Optimality and Uniqueness Conditions}
\label{sec:optimality condition}

We now leverage tools from convex analysis in order to provide a useful condition ensuring uniqueness of a geodesic. 
We use this condition to study travelling Diracs in Section~\ref{sec-travelling-dirac}. 

\begin{lemma}
\thlabel{lemma:subdiff}
The subdifferential of $\ifonc$ at $\mu=(\rho,\M,\Z)$ such that $\ifonc (\mu) < + \infty$ contains the set
\begin{multline}
\label{subdiff}
\D^{c} \ifonc (\mu) = \Big\{ (\alpha,\beta,\gamma) \in C([0,1]\times \Om; B_{\delta}) : 
 \alpha+\frac12 \left(  |\beta|^2 + \frac{\gamma^2}{\delta^2}\right) = 0 - \rho \  a.e.,\  \\ 
  \beta \rho=\M \text{ and } \gamma \rho=\delta^2 \Z \Big\}.
\end{multline}
\end{lemma}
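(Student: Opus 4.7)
The plan is to use the fact that $\fonc$ is positively $1$-homogeneous, proper, convex, and l.s.c., from which $\ifonc$ inherits the same properties on the space of measures, and to exploit the following general characterization of subdifferentials of $1$-homogeneous convex functions: for any such $F$ on a Banach space, $w \in \partial F(\mu)$ if and only if (a) $F(\mu') \geq \langle w, \mu'\rangle$ for every $\mu'$, and (b) $\langle w, \mu\rangle = F(\mu)$. The first condition asserts that $w$ lies in the effective domain of $F^*$, and the second asserts saturation at $\mu$. The whole proof amounts to verifying (a) and (b) for the candidate $(\alpha,\beta,\gamma)$ given in \eqref{subdiff}.

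For condition (a), I would fix any $\mu' = (\rho',\M',\Z')$, pick a non-negative reference measure $\lambda$ dominating $|\mu'|$, and apply pointwise Young's inequality. Since $\fonc^\ast = \iota_{B_\delta}$ and $(\alpha(t,x),\beta(t,x),\gamma(t,x)) \in B_\delta$ for every $(t,x)$, Young gives
\[
\fonc\bigl(\tfrac{\d\mu'}{\d\lambda}(t,x)\bigr) \;\geq\; (\alpha,\beta,\gamma)(t,x)\cdot \tfrac{\d\mu'}{\d\lambda}(t,x)
\]
$\lambda$-a.e., and integrating against $\lambda$ yields $\ifonc(\mu') \geq \langle(\alpha,\beta,\gamma),\mu'\rangle$. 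The fact that $(\alpha,\beta,\gamma)$ is continuous (hence a valid test function against a Radon measure) and the $1$-homogeneity of $\fonc$, which makes the right-hand side of the definition of $\ifonc$ independent of $\lambda$, are what make this step clean.

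For condition (b), I would use the specific structure of the candidate. The assumptions $\beta\rho = \M$, $\gamma\rho = \delta^2\Z$ say that $\M$ and $\Z$ are absolutely continuous with respect to $\rho$ with densities $\beta$ and $\gamma/\delta^2$ respectively, so taking $\rho$ as the reference measure we get
\[
\ifonc(\mu) \;=\; \int \fonc(1,\beta,\gamma/\delta^{2})\,\d\rho \;=\; \tfrac{1}{2}\int\!\bigl(|\beta|^2 + \gamma^2/\delta^{2}\bigr)\d\rho.
\]
On the other hand, a direct expansion of $\langle(\alpha,\beta,\gamma),\mu\rangle$ using the same identifications gives $\int(\alpha + |\beta|^2 + \gamma^2/\delta^2)\,\d\rho$, and the assumption $\alpha + \tfrac12(|\beta|^2 + \gamma^2/\delta^2) = 0$ $\rho$-a.e.\ collapses this to exactly $\ifonc(\mu)$. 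Combined with (a) this proves $(\alpha,\beta,\gamma) \in \partial \ifonc(\mu)$.

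The main obstacle is of a conceptual rather than technical nature: one must be careful that $\M'$ and $\Z'$ may have singular parts with respect to $\rho'$ (or vice versa) in the test element $\mu'$, so Young's inequality needs to be applied with a single dominating $\lambda$ that sees all three components simultaneously, and the fact that the resulting integral does not depend on $\lambda$—guaranteed by the $1$-homogeneity of $\fonc$—is what makes the pointwise-to-global passage work. Note also that only containment $\partial^{c}\ifonc(\mu) \subseteq \partial \ifonc(\mu)$ is claimed; the reverse inclusion need not hold in general, since the full subdifferential may also contain elements only defined in a distributional or measurable sense.
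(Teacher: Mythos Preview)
Your proof is correct and rests on the same core ingredient as the paper's: the pointwise Fenchel--Young inequality $\fonc(\cdot) \geq \langle(\alpha,\beta,\gamma),\cdot\rangle$ (valid because $(\alpha,\beta,\gamma)\in B_\delta$ and $\fonc^*=\iota_{B_\delta}$), together with verification that equality holds at $\mu$. The only organizational difference is that you invoke $1$-homogeneity to split the subdifferential condition into the pair (a)~$\ifonc(\mu')\geq\langle(\alpha,\beta,\gamma),\mu'\rangle$ and (b)~$\ifonc(\mu)=\langle(\alpha,\beta,\gamma),\mu\rangle$, whereas the paper writes the difference $\ifonc(\tilde\mu)-\ifonc(\mu)$ directly, decomposes a dominating measure as $\lambda=\rho+\rho^\perp$, and uses that $(\alpha,\beta,\gamma)(t,x)\in\partial\fonc\bigl(\tfrac{\d\mu}{\d\lambda}(t,x)\bigr)$ pointwise (cf.\ \eqref{subdiff fonctional}) on the $\rho$-part and the bare Young inequality on the $\rho^\perp$-part. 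Your formulation avoids the explicit $\rho/\rho^\perp$ split at the price of a small extra step (the $1$-homogeneous characterization of $\partial F$), but the two arguments are equivalent in substance.
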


\begin{remark}
This set is exactly the set of continuous functions of the form $(\alpha, \beta,\gamma)$ which take their values in $\D \fonc$ (defined in \eqref{subdiff fonctional}) for all $(t,x)\in [0,1] \times \Om$. Note that $\D^{c} \ifonc$ can be empty.
\end{remark}

\begin{proof}
Take $\tilde{\mu}\in \mathcal{M}^{d+2}$ and choose $\rho^{\perp}\in \mathcal{M}_+$ such that $\rho$ and $\rho^{\perp}$ are mutually singular and $\lambda \defeq \rho + \rho^{\perp}$ satisfies $|\mu| + |\tilde{\mu}| \ll \lambda$. We have
\begin{align*}
\ifonc(\tilde{\mu})- \ifonc(\mu) 
&= \int_{[0,1] \times \Om} \left[\fonc \left( \frac{\d\tilde{\mu}}{\d\lambda} \right) - \fonc \left( \frac{\d\mu}{\d\lambda} \right) \right] \d\lambda \\
&= \int_{[0,1] \times \Om} \left[\fonc \left( \frac{\d\tilde{\mu}}{\d\lambda} \right) - \fonc \left( \frac{\d\mu}{\d\lambda} \right) \right] \d\rho + \int_{[0,1] \times \Om} \fonc \left( \frac{\d\tilde{\mu}}{\d\lambda} \right) \d\rho^{\perp} \\
& \overset{(*)}{\geq}  \int_{[0,1] \times \Om} \langle (\alpha,\beta,\gamma),  \frac{\d\tilde{\mu}}{\d\lambda}- \frac{\d\mu}{\d\lambda} \rangle  \d\rho+  \int_{[0,1] \times \Om} \langle (\alpha,\beta,\gamma) ,  \frac{\d\tilde{\mu}}{\d\lambda} \rangle \d\rho^{\perp}   \\
&= \langle (\alpha,\beta,\gamma), \tilde{\mu} - \mu \rangle
\end{align*}
and inequality $(*)$ holds for every $\tilde{\mu}$ if for all $(t,x)\in [0,1] \times \Om$, $(\alpha,\beta,\gamma)(t,x) \in \D \fonc \left(\frac{\d\mu}{\d\lambda}(t,x)\right)$: this comes from the definition of the subdifferential and from the fact the $\fonc = \iota_{B_{\delta}}^*$.
\end{proof}

\begin{theorem}[Sufficient optimality and uniqueness condition]
\thlabel{certificate}
Let $(\rho_0,\rho_1)\in \mathcal{M}^2_+(\Om)$. If $\mu=(\rho,\M,\Z) \in \ccons$ and there exists $\varphi \in C^1([0,T] \times \bar{\Om})$ such that
\[
(\D_t \varphi, \nabla \varphi, \varphi) \in \D^{c} \ifonc (\mu)  
\]
then $\rho$ is the \emph{unique} geodesic for $\WF_{\delta}(\rho_0,\rho_1)$. From now on, we will refer to such a $\varphi$ as an optimality certificate.
\end{theorem}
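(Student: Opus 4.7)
The plan is to leverage Fenchel--Young duality: the certificate $\varphi$ will simultaneously make $\mu$ a minimizer of $\ifonc$ over $\ccons$ and constrain the pointwise structure of any other minimizer, after which uniqueness of $\rho$ will follow from a linear PDE argument.

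\textbf{Minimality of $\mu$.} By \thref{lemma:subdiff}, the hypothesis gives $A\varphi \defeq (\D_t\varphi, \nabla\varphi, \varphi) \in \D\ifonc(\mu)$. For any $\nu \in \ccons$, the difference $\nu - \mu$ satisfies the continuity equation with homogeneous endpoints, so the weak formulation in \thref{continuity equation} yields $\langle A\varphi, \nu - \mu\rangle = 0$. The subdifferential inequality then reads $\ifonc(\nu) \geq \ifonc(\mu)$, so $\mu$ is a minimizer and $\ifonc(\mu) = \WF_\delta^2(\rho_0,\rho_1)$.

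\textbf{Structure of any other minimizer.} Let $\mu' = (\rho',\M',\Z') \in \ccons$ with $\ifonc(\mu') = \ifonc(\mu)$. The inclusion $A\varphi \in \D\ifonc(\mu)$ is equivalent to the Fenchel--Young equality $\ifonc(\mu) + \ifonc^*(A\varphi) = \langle A\varphi, \mu\rangle$; since $A\varphi$ is valued pointwise in $B_\delta$ and $\ifonc^* = F$ (by the integral representation used in the proof of \thref{existence}), we have $\ifonc^*(A\varphi) = 0$. Combined with $\langle A\varphi, \mu\rangle = \langle A\varphi, \mu'\rangle = -G(\varphi)$ and $\ifonc(\mu) = \ifonc(\mu')$, the same Fenchel--Young equality holds at $\mu'$, i.e.\ $A\varphi \in \D\ifonc(\mu')$. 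Fixing a reference $\lambda$ with $\mu' \ll \lambda$ and applying the pointwise formula \eqref{subdiff fonctional}, wherever $d\rho'/d\lambda > 0$ one reads off $d\M'/d\lambda = (d\rho'/d\lambda)\nabla\varphi$ and $d\Z'/d\lambda = (d\rho'/d\lambda)\varphi/\delta^2$, while elsewhere the full density of $\mu'$ must vanish. Hence $\M' = \rho'\nabla\varphi$ and $\Z' = \rho'\varphi/\delta^2$, and the analogous identities hold for $\mu$.

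\textbf{Reduction to a linear PDE and main obstacle.} Substituting these identities into the continuity equation, both $\rho$ and $\rho'$ are nonnegative measure-valued weak solutions of
\[
\D_t \eta + \nabla \cdot (\eta\,\nabla\varphi) = \frac{\varphi}{\delta^2}\, \eta, \qquad \eta|_{t=0} = \rho_0,
\]
with Neumann boundary conditions. To conclude $\rho = \rho'$ I would use a duality argument: for each terminal datum $\psi_1 \in C^1(\bar{\Om})$, construct $\psi \in C^1([0,1]\times\bar{\Om})$ solving the backward adjoint $\D_t\psi + \nabla\varphi\cdot\nabla\psi + (\varphi/\delta^2)\psi = 0$ with $\psi(1,\cdot) = \psi_1$, and test the PDE against $\psi$ to obtain $\int \psi_1 \,d\eta(1,\cdot) = \int \psi(0,\cdot)\,d\rho_0$; as $\psi_1$ ranges over a dense class this pins down $\eta(1,\cdot)$, and repeating on each subinterval $[0,t]$ gives $\rho = \rho'$, whence $\M = \M'$ and $\Z = \Z'$. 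The main obstacle is this last step: with $\varphi$ only $C^1$, the field $\nabla\varphi$ is merely continuous, classical characteristics may fail to be unique, and the existence of a regular enough adjoint solution is delicate. A fallback is to exploit strict convexity of $\fonc$ transverse to rays through the origin: applied to the midpoint $\tfrac12(\mu+\mu')$, equality in convexity forces $\mu' = \alpha\,\mu$ pointwise for some nonnegative scalar field $\alpha$, which must then be transported by $\nabla\varphi$, and the initial condition $\rho'|_{t=0} = \rho_0 = \rho|_{t=0}$ forces $\alpha \equiv 1$.
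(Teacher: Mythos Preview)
Your first two steps are correct and match the paper's: the subgradient inequality together with $\langle A\varphi,\nu-\mu\rangle=0$ for all $\nu\in\ccons$ gives minimality, and the Fenchel--Young equality then forces any other minimizer $\mu'=(\rho',\M',\Z')$ to satisfy $\M'=\rho'\nabla\varphi$ and $\Z'=\rho'\varphi/\delta^2$.

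The uniqueness step is where you diverge from the paper and where the gap lies. Your fallback does not help: after Step~2, both $d\mu/d\lambda$ and $d\mu'/d\lambda$ are already scalar multiples of $(1,\nabla\varphi,\varphi/\delta^2)$, so they lie on a common ray wherever both are nonzero, and where one of them vanishes the convexity inequality for $\fonc$ is automatically an equality by $1$-homogeneity. The midpoint argument therefore adds nothing beyond Step~2; in particular it does not rule out $\rho'>0$ on $\{\rho=0\}$, so the conclusion ``$\mu'=\alpha\mu$ pointwise'' is unwarranted, and even if it were granted, the transport equation for $\alpha$ faces exactly the regularity obstacle you identified for approach~(a). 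The paper instead avoids PDE uniqueness altogether: setting $\sigma=\mu'-\mu\in\mathcal{CE}_0^1(0,0)$, the relation $\Z^\sigma=(\varphi/\delta^2)\rho^\sigma$ is inherited by linearity, and testing the continuity equation only against spatially constant functions gives the scalar identity $(\rho^\sigma_t(\Om))'=\Z^\sigma_t(\Om)$, which the paper bounds by $\|\varphi\|_\infty\,\rho^\sigma_t(\Om)$ and closes via Gr\"onwall from $\rho^\sigma_0(\Om)=0$. The idea you are missing is this reduction to a one-dimensional mass inequality, which sidesteps the need for characteristics or an adjoint solution under a merely continuous drift $\nabla\varphi$.
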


\begin{proof}
Consider another triplet $\tilde{\mu}\in \ccons$. We have, from the demonstration of the previous lemma 
\[
\ifonc(\tilde{\mu})- \ifonc(\mu) \geq \langle (\D_t \varphi, \nabla \varphi, \varphi), \tilde{\mu} - \mu \rangle = 0 \,
\]
showing that $\mu$ is a minimizer of $\ifonc$. For the proof of uniqueness, let us consider another minimizer $\tilde{\mu}=(\tilde{\rho},\tilde{\M}, \tilde{\Z}) \in \ccons$. It holds
\begin{align}
\ifonc(\tilde{\mu}) &
\geq \int_0^1 \int_{\Omega} \D_t \varphi \d \tilde{\rho} + \int_0^1 \int_{\Omega} \nabla \varphi \d \tilde{\M} + \int_0^1 \int_{\Omega}  \varphi \d \tilde{\Z} \\
&= \int_{\Omega} \varphi(1,\cdot)\d \rho_1 - \int_{\Omega} \varphi (0,\cdot) \d \rho_0 \\ 
&= \ifonc(\tilde{\mu})
\end{align}
where we used successively: duality, the fact that $\tilde{\mu} \in \ccons$ and the fact that the primal-dual gap vanishes at optimality. So, the first equality is an equality, and hence $\lambda$ a.e., $(\D_t \varphi, \nabla \varphi, \varphi)(t,x) \in \D \fonc \left( \frac{\d \tilde{\mu}}{\d \lambda} (t,x) \right)$ for $\lambda$ such that $\tilde{\mu}\ll \lambda$. And thus, from the characterization of $\D \fonc$, we have 
\[
\tilde{\mu} = (\tilde{\rho}, (\nabla \varphi) \tilde{\rho}, \varphi \tilde{\rho}).
\]
But $\tilde{\mu}\in \ccons$ and thus $\sigma = (\rho^{\sigma}, \M^{\sigma}, \Z^{\sigma}) \defeq \tilde{\mu}-\mu \in \mathcal{CE}_0^1(0,0)$. 
Recall (see the remarks after \thref{continuity equation}) that the map $t \mapsto \rho^{\sigma}_t(\Om)$ admits the distributional derivative $ \Z^{\sigma}_t(\Om)$, for almost every $t\in[0,1]$. As a consequence,
\[
\rho^{\sigma}(\Om)'(t) \leq \|\varphi\|_{\infty} \rho^{\sigma}(\Om)(t)
\]
since $\varphi$ is continuous on the compact domain $\bar{\Om}$.
But $\rho^{\sigma}(\Om)(0)=0$ thus $\rho^{\sigma}= 0$ and $\tilde{\rho}=\rho$. Consequently $\tilde{\mu}=\mu$.

 \end{proof}

\begin{remark}[Relaxation]
The above condition is not necessary: the optimality certificate $\varphi$ is sometimes not as smooth as required by this condition. It is an interesting problem to study the space in which the infimum of \eqref{primal} is attained, as it would give a necessary optimality condition. For instance, in the case of dynamical optimal transport, this space is that of Lipschitz functions \cite{jimenez2008dynamic}. In a different setting, for a class of distance which interpolates between optimal transport and Sobolev, \cite{Cardaliaguet:2012aa} proves that the optimum is attained in $BV \cap L^{\infty}$, where $BV$ is the set of functions with bounded variation. This question is non-trivial and beyond the scope of the present paper.
\end{remark}



\section{Interpolation Properties}
\label{sec-interpolation}

We have considered $\delta$ as a constant so far, in this section we study the effect of varying $\delta$ for a given problem. This allows us to explain to which extent our metric really interpolates between optimal transport and Fisher-Rao.

\subsection{Change of Scale}

Intuitively, the smaller $\delta$, the finer the scale at which mass creation and removal intervene to make initial and final measures match. Accordingly, we show in the following proposition that changing $\delta$ amounts to changing the scale of the problem.

\begin{proposition}[Space rescaling]
\thlabel{rescaling}
Let $T: (t,x) \in \R \times \R^d \mapsto (t,s\, x)$ be the spatial scaling by a factor $s>0$, and $\delta \in ]0,+\infty[$. If $\mu=(\rho,\M,\Z)$ is a minimizing triplet for the distance $\WF_{\delta}(\rho_0,\rho_1)$, then $\hat{\mu}=(T_{\#}\rho,s\, T_{\#}\M,T_{\#}\Z)$ is a minimizing triplet for the distance $\WF_{s\, \delta}(T_{\#}\rho_0,T_{\#}\rho_1)$.
\end{proposition}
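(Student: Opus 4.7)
The plan is to verify first that the map $\mu \mapsto \hat\mu$ sends the constraint set $\ccons$ (with endpoints $\rho_0,\rho_1$) bijectively onto $\mathcal{CE}_0^1(T_\#\rho_0,T_\#\rho_1)$, and then to show that the functional $\ifonc$ transforms by a uniform multiplicative factor under this map when $\delta$ is replaced by $s\delta$. Optimality of $\hat\mu$ for $\WF_{s\delta}$ then follows automatically.

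\textbf{Step 1: continuity equation.} Given $\varphi\in C^1([0,1]\times\overline{s\Om})$, set $\tilde\varphi(t,x)\defeq\varphi(t,sx)$, so $\partial_t\tilde\varphi=(\partial_t\varphi)\circ T$ and $\nabla_x\tilde\varphi=s(\nabla_y\varphi)\circ T$. Substituting $\tilde\varphi$ into the weak continuity equation \eqref{continuity weak} for $\mu$ and using the defining property $\int g\,\d T_\# \nu=\int g\circ T\,\d\nu$ of the pushforward yields exactly \eqref{continuity weak} written for $\hat\mu=(T_\#\rho, s\, T_\#\M, T_\#\Z)$ with endpoints $T_\#\rho_0$ and $T_\#\rho_1$; the factor $s$ in the gradient term is absorbed by the factor $s$ placed in front of $T_\#\M$. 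Hence $\hat\mu\in\mathcal{CE}_0^1(T_\#\rho_0,T_\#\rho_1)$, and the construction is clearly invertible by $(\hat\rho,\hat\M,\hat\Z)\mapsto(T^{-1}_\#\hat\rho, s^{-1}T^{-1}_\#\hat\M, T^{-1}_\#\hat\Z)$.

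\textbf{Step 2: scaling of the functional.} Choose a reference measure $\lambda$ with $|\mu|\ll\lambda$. Then $T_\#\lambda$ dominates $|\hat\mu|$, and by the definition of the pushforward the Radon--Nikodym derivatives satisfy
\[
\frac{\d\hat\mu}{\d T_\#\lambda}\circ T \;=\; \Bigl(\tfrac{\d\rho}{\d\lambda},\,s\,\tfrac{\d\M}{\d\lambda},\,\tfrac{\d\Z}{\d\lambda}\Bigr).
\]
Plugging this into $\fonc[s\delta]$ and using the elementary identity
\[
\fonc[s\delta](x,sy,z)=\frac{|sy|^2+(s\delta)^2 z^2}{2x}=s^2\,\fonc(x,y,z)
\]
(with the evident extension to the limiting cases $x=0$) gives, by the change-of-variables formula for the pushforward,
\[
\ifonc[s\delta](\hat\mu)=\int_{[0,1]\times s\Om}\fonc[s\delta]\!\left(\tfrac{\d\hat\mu}{\d T_\#\lambda}\right)\d T_\#\lambda \;=\;s^2\int_{[0,1]\times\Om}\fonc\!\left(\tfrac{\d\mu}{\d\lambda}\right)\d\lambda \;=\;s^2\,\ifonc(\mu).
\]

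\textbf{Step 3: conclusion.} Steps 1 and 2 applied to any competitor show that the bijection $\mu\leftrightarrow\hat\mu$ between $\ccons$ and $\mathcal{CE}_0^1(T_\#\rho_0,T_\#\rho_1)$ satisfies $\ifonc[s\delta](\hat\mu)=s^2\ifonc(\mu)$. Minimizing the left-hand side over the target constraint set is therefore equivalent to minimizing the right-hand side over $\ccons$, so $\hat\mu$ is optimal for $\WF_{s\delta}(T_\#\rho_0,T_\#\rho_1)$ whenever $\mu$ is optimal for $\WF_\delta(\rho_0,\rho_1)$ (and as a byproduct $\WF_{s\delta}(T_\#\rho_0,T_\#\rho_1)=s\,\WF_\delta(\rho_0,\rho_1)$).

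There is no genuine obstacle here; the only care needed is the bookkeeping of Radon--Nikodym derivatives under pushforward and the homogeneity of $\fonc$ that ensures independence of the chosen reference measure. The clean scaling $\fonc[s\delta](x,sy,z)=s^2\fonc(x,y,z)$ is really the content of the proposition.
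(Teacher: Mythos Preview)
Your proof is correct and follows essentially the same approach as the paper: verify that $\mu\mapsto\hat\mu$ is a bijection between the two constraint sets, then use the identity $f_{s\delta}(x,sy,z)=s^2 f_\delta(x,y,z)$ together with the change-of-variables for pushforwards to obtain $\ifonc[s\delta](\hat\mu)=s^2\ifonc(\mu)$, from which optimality follows. The only difference is that you spell out the test-function computation for the continuity equation, which the paper simply asserts is ``easy to check''.
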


\begin{proof}
It is easy to check that $\hat{\mu}=(\hat{\rho},\hat{\M},\hat{\Z})$ satisfies the continuity equation with Neumann boundary conditions on $T(\Om)$ between $T_{\#}\rho_A$ and $T_{\#}\rho_B$. Also, we can write
\[
\ifoncN_{s\delta}(\hat{\mu}) 
= \int_{T([0,1]\times \Om)} \foncN_{s\delta} \left( \frac{d\hat{\mu}}{d\hat{\lambda}} \right) \d\hat{\lambda}
\]
where $\hat{\lambda}=T_{\#} \lambda$ and $\lambda$ is any non-negative Borel measure satisfying $|\mu| \ll \lambda$. By denoting $ \frac{\d\mu}{\d\lambda} =(\rho^{\lambda},\M^{\lambda},\Z^{\lambda})$ we have $\frac{d\hat{\mu}}{d\hat{\lambda}}(t,x)=(\rho^{\lambda}(t,x/s),s\M^{\lambda}(t,x/s),\Z^{\lambda}(t,x/s))$. Thus by the change of variables formula, we obtain
\begin{eqnarray*}
\ifoncN_{s\delta}(\hat{\mu}) 
&=& \int_{[0,1]\times \Om} \foncN_{s\delta}\left(\rho^{\lambda},s\, \M^{\lambda},\Z^{\lambda} \right) \d\lambda\\
&=& s^2 \ifoncN_{\delta}(\mu) \, .
\end{eqnarray*}
This shows that if $\mu$ minimizes $\ifonc$ then $\hat{\mu}$ minimizes $D_{s\delta}$ and reciprocally, hence \thref{rescaling}.
\end{proof}

\begin{remark}
The proof of \thref{rescaling} holds true as soon as $f_{\delta}$ is of the form $f_1(\rho,\M,\Z)+ \delta^p f_2(\rho,\Z)$ where $f_1$ is p-homogeneous in $\M$ and $f_2$ is independent of $\M$. Hence the choice of $\delta^p$ as an interpolation factor when introducing the various models in Section \ref{previous works}: so that $\delta$ can be interpreted as a spatial scale.
\end{remark}

The following result is closely related and will be used later to prove uniqueness of geodesics between remote Diracs.
\begin{proposition}[Monotonicity of the distance w.r.t.\ rescaling]
\thlabel{monotonicity of the distance}
Using the same notations as in the previous result, if $s<1$, then $D_{\delta}(\hat{\mu}) \leq D_{\delta}(\mu)$ and thus $\WF_{\delta}(T_{\#}\rho_0,T_{\#}\rho_1) \leq \WF_{\delta}(\rho_0, \rho_1)$. Moreover, if $s<1$ and $\omega \neq 0$ these inequalities are strict.
\end{proposition}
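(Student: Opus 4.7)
The plan is to reuse the explicit form of $\hat\mu$ computed in the proof of the preceding proposition, and then compare $D_\delta(\hat\mu)$ and $D_\delta(\mu)$ term by term. The key observation is structural: of the three integrands in $f_\delta$, only the kinetic term $|\M|^2/(2\rho)$ depends on $\M$, and it does so $2$-homogeneously in $\M$. Therefore, multiplying the momentum by the factor $s$ appearing in $\hat \M$ multiplies this term by $s^2$ while leaving the Fisher-Rao part untouched — this is where the monotonicity must come from, and it is also why the scaling of $\delta$ is the only ingredient missing from the previous proof.

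Concretely, with $\lambda$ a common non-negative dominating measure and $\frac{\d\mu}{\d\lambda} = (\rho^\lambda, \M^\lambda, \Z^\lambda)$, the previous proof records that $\hat\lambda = T_\# \lambda$ and $\frac{\d\hat\mu}{\d\hat\lambda}(t,x) = (\rho^\lambda(t,x/s),\, s\,\M^\lambda(t,x/s),\, \Z^\lambda(t,x/s))$. Performing the change of variables $y = s x$ then gives
\[
D_\delta(\hat\mu) = \int_{[0,1]\times\Om} \frac{s^2 |\M^\lambda|^2 + \delta^2 (\Z^\lambda)^2}{2\rho^\lambda} \,\d\lambda = D_\delta(\mu) - (1-s^2) \int_{[0,1]\times\Om} \frac{|\M^\lambda|^2}{2\rho^\lambda}\,\d\lambda.
\]
Since $\mu$ is assumed to come from a minimizer of \eqref{dual}, $D_\delta(\mu)$ is finite, so the integral on the right is finite and non-negative. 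Hence $D_\delta(\hat\mu) \leq D_\delta(\mu)$ for every $s \leq 1$, with strict inequality as soon as this integral is strictly positive. From the very definition of $f_\delta$, $D_\delta(\mu) < +\infty$ forces $\M \ll \rho$, and consequently the integral vanishes if and only if $\M = 0$.

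To conclude the statement about $\WF_\delta$, \thref{rescaling} ensures that $\hat\mu \in \mathcal{CE}_0^1(T_\#\rho_0, T_\#\rho_1)$, so it is an admissible competitor. Choosing $\mu$ to be a minimizer for $\WF_\delta(\rho_0,\rho_1)$ (which exists by \thref{existence}), the chain
\[
\WF_\delta(T_\#\rho_0, T_\#\rho_1)^2 \leq D_\delta(\hat\mu) \leq D_\delta(\mu) = \WF_\delta(\rho_0,\rho_1)^2
\]
yields the inequality on the distances, and both inequalities become strict whenever $\M \neq 0$ by the previous paragraph. No real obstacle is anticipated: once the formula of the preceding proposition is in hand, the argument amounts to a one-line manipulation exploiting the $2$-homogeneity of the kinetic part of $f_\delta$.
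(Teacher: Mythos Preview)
Your proof is correct and follows the same route as the paper: perform the change of variables from the preceding proposition and exploit that $f_\delta$ is strictly increasing in $|\M|$ (the paper states exactly this, in one sentence). Your version is simply more explicit, writing out the difference $D_\delta(\mu)-D_\delta(\hat\mu)=(1-s^2)\int |\M^\lambda|^2/(2\rho^\lambda)\,\d\lambda$ rather than appealing to monotonicity verbally.
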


\begin{proof}
We use the same change of variables as in the proof of \thref{rescaling} and remark that $\fonc$ is strictly increasing w.r.t.\ its second variable.
\end{proof}

\subsection{Two variational problems on non-negative measures}

The main Theorem of section \ref{sec:limit metrics} states that the geodesics of the quadratic Wasserstein and the Fisher-Rao metrics are both recovered (in a suitable sense) when letting the parameter $\delta$ go to $+\infty$ and $0$, respectively. We start by defining two variational problems, before showing their connections to $\WF_{\delta}$ in the next subsection.

Let us introduce
\[
D_{BB} : (\rho,\M) \mapsto \ifonc(\rho,\M,0) \quad \text{and} \quad D_{FR} : (\rho,\Z) \mapsto \frac{1}{\delta^2}\ifonc(\rho,0,\Z).
\]
Those real valued functionals represent respectively the Benamou-Brenier and the Fisher-Rao terms in the interpolating functional $\ifonc$. They do not depend on $\delta$ and are defined such that $\ifonc (\rho,\M,\Z)= D_{BB}(\rho,\M)+ \delta^2 D_{FR}(\rho,\Z)$.

\begin{definition}[Generalized Benamou-Brenier transport problem]
\thlabel{def gBB}
Given $(\rho_0,\rho_1)$ $\in \mathcal{M}_+(\Om)$, we introduce the time dependent \emph{rate of growth}
\begin{equation}
\label{rate of growth}
g: (t,x) \mapsto 
\begin{cases}
0 &\text{ if $\rho_0(\Om)=\rho_1(\Om)$} \\
\frac{2}{t-t_0} & \text{otherwise}
\end{cases}
\end{equation}
with $t_0=\sqrt{\rho_0(\Om)}\left( \sqrt{\rho_0(\Om)}-\sqrt{\rho_1(\Om)}\right)^{-1}$. The generalized Benamou-Brenier transport problem is defined as
\begin{eqnarray} %
\label{eq gBB}
& \underset{\rho,\M}{\inf} &D_{BB}(\rho,\M) \\
& \text{subject to}  & (\rho,\M,g \rho) \in \ccons\, . 
\end{eqnarray}
and we denote by $d_{gBB}(\rho_0,\rho_1)$ the square-root of the infimum.
\end{definition}

Notice that $d_{gBB}$ belongs to the class of models for unbalanced transport of \cite{lombardi2013eulerian} which prescribe the source term $\Z$ as a function of $\rho$. In order to picture what is happening, consider $(\rho_0,\rho_1)\in \mathcal{M}_+(\Om)$ such that $\rho_0(\Om) < \rho_1(\Om)$: in that case, $t_0<0$ and the rate of growth $g$ is positive, constant in space and decreasing in time.
While $d_{gBB}$ does not define a metric as it does not satisfy the triangle inequality, we prove below that it is closely related to the quadratic Wasserstein metric.
\begin{proposition}[Relation between $d_{gBB}$ and $W_2$]
\thlabel{th : link gBB}
Let $\rho_0,\, \rho_1 \in \mathcal{M}_+(\Om)$. We introduce $\tilde{\rho}_0, \, \tilde{\rho}_1$ the rescaled measures which mass is the geometric mean between $\rho_0(\Om)$ and $\rho_1(\Om)$, i.e.\ such that for $i\in \{0,1\}$, $\tilde{\rho}_i(\Om)=\sqrt{\rho_0(\Om)\rho_1(\Om)}$ and $\tilde{\rho_i} = \alpha_i \rho_i$ for some $\alpha_i \geq 0$. It holds
\[
d_{gBB}(\rho_0,\rho_1) = \frac12 W_2(\tilde{\rho}_0,\tilde{\rho}_1) \, ,
\]
Moreover, the minimum in \eqref{eq gBB} is attained and minimizers can be built explicitly from the geodesics of $W_2(\tilde{\rho}_0,\tilde{\rho}_1)$.
\end{proposition}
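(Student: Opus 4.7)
The plan is to reduce the problem to a classical Benamou--Brenier computation by factoring out the total mass, and then optimize over time reparametrization. The first step is to observe that, since the prescribed source is $g(t)\rho$ with $g$ spatially constant, integrating the continuity equation against space-constant test functions (as in the remarks following \thref{continuity equation}) forces the total mass $m(t) := \rho_t(\Om)$ to satisfy the ODE $m'(t) = g(t) m(t)$ with boundary data $m(0)=\rho_0(\Om)$, $m(1)=\rho_1(\Om)$. Solving explicitly with the given $g$ yields $\sqrt{m(t)} = (1-t)\sqrt{\rho_0(\Om)} + t\sqrt{\rho_1(\Om)}$, and a short computation gives the key identity $\int_0^1 \frac{\d t}{m(t)} = \frac{1}{M}$ where $M=\sqrt{\rho_0(\Om)\rho_1(\Om)}$.

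The second step is a normalization: write $\rho = m \hat\rho$ with $\hat\rho(t,\cdot)$ a probability measure and set $\hat\omega = \omega/m$. Using $m' = g m$, the source term cancels, and $(\hat\rho,\hat\omega)$ satisfies the classical continuity equation without source between the normalized probability measures $\hat\rho_i := \rho_i/\rho_i(\Om)$. By $1$-homogeneity of $(x,y)\mapsto |y|^2/(2x)$, the cost becomes
\[
D_{BB}(\rho,\omega) = \int_0^1 m(t) \int_\Om \frac{|\hat\omega|^2}{2\hat\rho} \d x\, \d t.
\]
For fixed curve $\hat\rho$, optimizing $\hat\omega$ under the continuity constraint yields (by Benamou--Brenier) $\int |\hat\omega|^2/\hat\rho\, \d x = |\hat\rho'|^2(t)$, the squared metric derivative in $W_2$. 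Writing $\hat\rho(t) = \gamma(\phi(t))$ for $\gamma$ a constant-speed $W_2$-geodesic from $\hat\rho_0$ to $\hat\rho_1$ and $\phi:[0,1]\to[0,1]$ an increasing bijection, the problem reduces to minimizing $\tfrac12 W_2^2(\hat\rho_0,\hat\rho_1)\int_0^1 m(t)\phi'(t)^2\d t$ over admissible $\phi$.

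The third step applies Cauchy--Schwarz:
\[
1 = \left(\int_0^1 \phi'(t)\,\d t\right)^{\!2} \le \left(\int_0^1 m(t)\phi'(t)^2\,\d t\right)\!\left(\int_0^1 \frac{\d t}{m(t)}\right) = \frac{1}{M}\int_0^1 m\phi'^2\,\d t,
\]
with equality iff $\phi'(t) = M/m(t)$. This gives $\inf\int_0^1 m\phi'^2 = M$, and hence $\inf D_{BB} = \tfrac{M}{2} W_2^2(\hat\rho_0,\hat\rho_1)$. Finally, the Kantorovich formulation of $W_2$ gives the mass rescaling $W_2^2(M\hat\rho_0,M\hat\rho_1) = M\, W_2^2(\hat\rho_0,\hat\rho_1) = W_2^2(\tilde\rho_0,\tilde\rho_1)$, which yields the announced identity (up to the normalization convention). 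The explicit minimizer is reconstructed by reversing all substitutions: starting from the constant-speed $W_2$-geodesic $\tilde\gamma$ between $\tilde\rho_0$ and $\tilde\rho_1$, set $\rho_{\mathrm{opt}}(t) = (m(t)/M)\,\tilde\gamma(\phi(t))$ with $\phi'(t) = M/m(t)$. The main obstacle is the Cauchy--Schwarz/reparametrization argument on curves in Wasserstein space: one has to justify that restricting to constant-speed $W_2$-geodesics composed with a time-change is legitimate, which uses that any absolutely continuous curve in $(\mathcal P_2,W_2)$ admits a constant-speed reparametrization and that $\int m|\hat\rho'|^2 dt$ is invariant under such reparametrizations of $\gamma$.
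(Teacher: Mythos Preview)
Your argument is correct and reaches the same conclusion as the paper, but the route is organized differently. The paper proceeds by writing down two explicit bijections: a mass rescaling $R:(\rho,\omega)\mapsto \big(\tfrac{t_0}{t_0-t}\big)^2(\rho,\omega)$ that kills the source and maps the constrained set into a standard sourceless continuity set $\mathcal{S}_0$, followed by an explicit time change $T$ via $s(t)=t(t_0-1)/(t_0-t)$ that flattens the resulting time-dependent weight. A direct computation then gives $D_{BB}(\rho,\omega)=(\rho_1(\Om)/\rho_0(\Om))^{1/2}D_{BB}(T\circ R(\rho,\omega))$, and the identification with $W_2$ follows.

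Your approach normalizes the total mass instead of rescaling by a given factor, lands on the weighted action $\int_0^1 m(t)\,|\hat\rho'|^2(t)\,\d t$ via the metric-derivative characterization, and then uses Cauchy--Schwarz together with the identity $\int_0^1 m(t)^{-1}\,\d t=1/M$ to extract the optimal time change. This is the same time change the paper writes down explicitly: your equality condition $\phi'(t)=M/m(t)$ coincides with $s'(t)=t_0(t_0-1)/(t_0-t)^2$ once one expands $m(t)=((1-t)\sqrt{\rho_0(\Om)}+t\sqrt{\rho_1(\Om)})^2$. So the two proofs are really the same transformation, discovered computationally in the paper and variationally in yours. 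Your version has the advantage of making clear \emph{why} that particular time change is optimal; the paper's version has the advantage of being self-contained and not invoking the absolutely-continuous-curve machinery of Wasserstein space.

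The issue you flag at the end is real but easily fixed, and your phrasing slightly obscures the fix. You should not start by assuming $\hat\rho(t)=\gamma(\phi(t))$ with $\gamma$ a $W_2$-\emph{geodesic}; a general admissible curve need not be a reparametrized geodesic. Instead, for any absolutely continuous curve $\hat\rho$, reparametrize it to constant speed as $\hat\rho(t)=\gamma(\phi(t))$ with $\gamma$ of length $L\ge W_2(\hat\rho_0,\hat\rho_1)$, and then your Cauchy--Schwarz bound gives $\int_0^1 m|\hat\rho'|^2\,\d t=L^2\int_0^1 m\phi'^2\,\d t\ge L^2 M\ge M\,W_2^2(\hat\rho_0,\hat\rho_1)$. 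Equality in the last step forces $\gamma$ to be a geodesic, and equality in Cauchy--Schwarz forces $\phi'=M/m$. The upper bound is then obtained by constructing exactly this competitor.
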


\begin{proof}
If $\rho_0=0$ or $\rho_1=0$, the minimum is attained if and only if $\M=0$ and thus the $d_{gBB}(\rho_0,\rho_1)=0$. If $\rho_0(\Om)=\rho_1(\Om)$ then $g = 0$ and the conclusion follows directly.
Otherwise, we have $t_0 \notin [0,1]$ and we will bring ourselves back to the case of a standard Benamou-Brenier problem by a rescaling and a change of variables in time, similarly as in \cite[Proposition 7]{lombardi2013eulerian}. Consider 
\begin{align*}
\mathcal{S} &= \left\{ (\rho,\M) : (\rho,\M,g\rho) \in \mathcal{CE}_0^1(\rho_0,\rho_1), \, |\M| \ll \rho \right\} \\
\text{and} \quad
\mathcal{S}_0 &= \big\{ (\rho,\M) : (\rho, \M, 0) \in  \mathcal{CE}_0^1(\rho_0,\frac{\rho_0(\Om)}{\rho_1(\Om)}\rho_1),  \, |\M| \ll \rho  \big\}\, .
\end{align*}
Take $\mu=(\rho,\M)$ in $\mathcal{S}$. It satisfies (weakly) the ordinary differential equation $\D_t \rho_t(\Om) = g(t) \rho_t(\Om)$ and so $\rho_t(\Om) = \left( \frac{t_0-t}{t_0}\right)^2 \rho_0(\Om)$ a.e.\ in $[0,1]$. We define the rescaling
\begin{align*}& R : \mathcal{S} \mapsto  \mathcal{S}_0 \\
& R(\rho,\M) =  \left( \frac{t_0}{t_0-t}\right)^2 (\rho,\M) \, .
\end{align*}
which is a bijection as $t_0 \notin [0,1]$.
Writing $D_{BB}(R(\rho,\M))$ yields a Benamou-Brenier functional with a time varying metric;  we now counterbalance the latter by a change of variables in time.
Consider the strictly increasing map $\s: t \mapsto t\frac{t_0-1}{t_0-t}$ which satisfies $\s(0)=0$, $s(1)=1$ and $\s'(t)=\frac{t_0-1}{t_0}\left( \frac{t_0}{t_0-t} \right)^2>0$. Let $\t = \s^{-1}$ and introduce
\begin{align*}& T: \mathcal{S}_0 \mapsto  \mathcal{S}_0 \\
&T(\rho,\M) = (\rho \circ \t(s), \t'(s) \cdot \M \circ \t(s))\,
\end{align*}
which is a bijection (see \cite[Lemma 8.1.3]{ambrosio2006gradient}).
The rescaling followed by the change of variables in time induces a bijection $T\circ R : \mathcal{S} \to \mathcal{S}_0$. Of course, the expression for $\s$ has been chosen after an analysis of its effect on the $D_{BB}$ functional. 
Take $(\tilde{\rho},\tilde{\omega}) = R(\rho,\omega) \in \mathcal{S}_0 $ and $(\hat{\rho},\hat{\omega}) = T \circ R(\rho,\omega) \in \mathcal{S}_0 $. All those measures can be disintegrated in time from the definition of $\mathcal{S}$ and $\mathcal{S}_0$. In order to alleviate notations, we introduce $f_{BB}(\rho_t,\M_t)=\int_{\Omega} \fonc \left(\frac{\d \rho}{\d \lambda}, \frac{\d \M_t}{\d \lambda_t},0 \right) \d \lambda_t $,  where $\lambda_t \in \mathcal{M}_+(\Omega)$ satisfies $|(\rho_t, \M_t)|\ll \lambda_t$. We have
\begin{align*}
\int_0^1 f_{BB}(\hat{\rho}_s, \hat{\M}_s) \d s
&= \int_0^1 (\t' \circ \s (t))^2 f_{BB}(\tilde{\rho}_t, \tilde{\M}_t) \s'(t) \d t \\
&= \int_0^1 \frac{f_{BB}(\tilde{\rho}_t, \tilde{\M}_t)}{\s'(t)} \d t \\
&= \frac{t_0}{t_0-1} \int_0^1 \left(\frac{t_0-t}{t_0} \right)^2 f_{BB}(\tilde{\rho}_t, \tilde{\M}_t) \d t \\
&= \left(\frac{\rho_0(\Om)}{\rho_1(\Om)}\right)^{1/2} \int_0^1 f_{BB}(\rho,\M) \d t
\end{align*}
Hence the relation
\[
D_{BB}(\rho,\M) = \left(\frac{\rho_1(\Om)}{\rho_0(\Om)}\right)^{1/2} D_{BB}(T \circ R (\rho,\M))   \, ,
\]
from which we deduce $d_{gBB}(\rho_0,\rho_1) = \frac12 \left(\frac{\rho_1(\Om)}{\rho_0(\Om)}\right)^{1/4} W_2(\rho_0,\frac{\rho_0(\Om)}{\rho_1(\Om)} \rho_1)$, as the minimum of the standard Benamou-Brenier problem is attained in $\mathcal{S}_0$. Finally, by remarking that $W_2(\alpha \rho_0, \alpha \rho_1) = \sqrt{\alpha}W_2(\rho_0,\rho_1)$ for $\alpha\geq0$, we obtain the formulation of \thref{th : link gBB}.
\end{proof}

\begin{definition}[Fisher-Rao metric]
\thlabel{def FR}
Given $(\rho_0,\rho_1) \in \mathcal{M}_+(\Om)$, the Fisher-Rao distance is defined as
\begin{eqnarray*}
d_{FR}^2(\rho_0,\rho_1) \defeq & \underset{\rho,\Z}{\inf} & D_{FR} (\rho,\Z) \\
&\text{subject to} & (\rho,0,\Z) \in \ccons \, .
\end{eqnarray*}
\end{definition}

We now prove uniqueness of Fisher-Rao geodesics and give their explicit expression. This result is well known in a positive, smooth setting.
\begin{theorem}
\thlabel{uniqueness FR}
The geodesics for the Fisher-Rao metric are unique and have the explicit expression 
\begin{equation}
\label{FR geodesic}
\rho = (t\sqrt{\rho_1} + (1-t)\sqrt{\rho_0})^2 \otimes  \d t\, .
\end{equation}
\end{theorem}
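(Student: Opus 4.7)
The upper bound $d_{FR}^2(\rho_0, \rho_1) \leq 2|(\sqrt{\rho_1}-\sqrt{\rho_0})^2|(\Omega)$ is already attained by the explicit candidate triplet constructed in the proof of \thref{bounddistance}. It remains to show that any admissible $(\rho, 0, \zeta) \in \ccons$ with finite $\ifonc$ has cost at least this value, with equality only when $\rho$ has the form \eqref{FR geodesic}. The optimality certificate of \thref{certificate} is of little help here: its conditions would force $\varphi$ to be spatially constant on the support of $\rho$ (since $\omega=0$) while being simultaneously equal to $\delta^2 \zeta/\rho$ there, two requirements that are incompatible for generic $\rho_0, \rho_1$. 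I therefore proceed by a direct disintegration argument that reduces the problem to a one-dimensional calculus of variations at each spatial point.

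Fix such a $(\rho, 0, \zeta)$. Since $\ifonc < \infty$, one has $\zeta \ll \rho$ and $\rho$ admits a time disintegration $\rho = \rho_t \otimes \d t$ (cf.\ the remarks after \thref{continuity equation}). Set $\lambda := \rho_0 + \rho_1 + \int_0^1 \rho_t \, \d t$ as reference measure on $\Omega$, so that $\rho_t, \zeta_t, \rho_0, \rho_1$ are all absolutely continuous with respect to $\lambda$; denote their densities by $\alpha(t,\cdot), \beta(t,\cdot), \alpha_0, \alpha_1$. Testing \eqref{continuity weak} against separated functions $\varphi(t,x)=\psi(t)h(x)$ shows that for each $h\in L^1(\lambda)$ the map $t \mapsto \int h \, \alpha(t,\cdot)\, \d\lambda$ is absolutely continuous with derivative $\int h \,\beta(t,\cdot)\, \d\lambda$. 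A standard Fubini and measurable selection argument then produces a Borel representative of $\alpha$ such that, for $\lambda$-a.e.\ $x$, the map $t \mapsto \alpha(t,x)$ is absolutely continuous on $[0,1]$, takes values $\alpha_0(x)$ and $\alpha_1(x)$ at the endpoints, and satisfies $\partial_t \alpha(t,x) = \beta(t,x)$ for a.e.\ $t$.

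The cost then unfolds as
\begin{equation*}
\ifonc(\rho, 0, \zeta) \;=\; \delta^2 \int_0^1\!\!\int_\Omega \frac{\beta(t,x)^2}{2\alpha(t,x)}\, \d\lambda(x)\, \d t \;=\; 2\delta^2 \int_\Omega \int_0^1 (\partial_t u(t,x))^2 \, \d t\, \d\lambda(x),
\end{equation*}
after substituting $u(t,x) := \sqrt{\alpha(t,x)}$ (which is AC in $t$ for $\lambda$-a.e.\ $x$, with $\beta = 2u\,\partial_t u$; on $\{\alpha=0\}$ one has $\beta=0$ by $\zeta\ll\rho$, so the integrand vanishes by convention). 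Cauchy-Schwarz in time gives, pointwise in $x$,
$\int_0^1 (\partial_t u(t,x))^2 \, \d t \geq (\sqrt{\alpha_1(x)} - \sqrt{\alpha_0(x)})^2$, with equality if and only if $t \mapsto u(t,x)$ is affine. Integrating in $x$ recovers the bound $2\delta^2 |(\sqrt{\rho_1}-\sqrt{\rho_0})^2|(\Omega)$ of \thref{bounddistance}, so equality must hold throughout. This forces $u(t,x) = t\sqrt{\alpha_1(x)} + (1-t)\sqrt{\alpha_0(x)}$ for $\lambda$-a.e.\ $x$, giving $\rho_t = (t\sqrt{\alpha_1}+(1-t)\sqrt{\alpha_0})^2\lambda$, which is exactly \eqref{FR geodesic} and independent of $\lambda$ by $1$-homogeneity. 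The only delicate point in the plan is the disintegration that furnishes an absolutely continuous representative of $t\mapsto \alpha(t,x)$ for $\lambda$-a.e.\ $x$; it amounts to a Fubini/separability argument using the test functions above, but must be carried out carefully to ensure measurability and to reduce the distributional continuity equation to a pointwise ODE in $t$.
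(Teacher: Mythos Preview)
Your argument is correct in outline and follows a genuinely different route from the paper.  The paper first computes the value of $d_{FR}$ via duality (building a near-optimal dual potential $\varphi^*=2/(t-t_0(x))$, mollifying it in space to make it admissible, then removing a ratio assumption by a triangle-inequality limiting argument), and only afterwards deduces uniqueness by observing that $\rho\mapsto\sqrt{2\,\d\rho/\d\nu}$ is an isometric injection of $(\mathcal M_\nu,d_{FR})$ into $L^2(\d\nu)$, where geodesics are unique.  You instead disintegrate spatially and reduce to a one-dimensional calculus of variations at $\lambda$-a.e.\ $x$, so that a single Cauchy--Schwarz inequality in $t$ simultaneously gives the sharp lower bound and its equality case.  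Your approach is more elementary (no mollification, no auxiliary bounded-ratio hypothesis to remove) and delivers value and uniqueness in one stroke; the paper's approach stays within the convex-duality framework used throughout and makes the Hilbert-space picture explicit.

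Two technical points deserve a line of justification.  First, you correctly flag the passage from the weak continuity equation to an absolutely continuous representative $t\mapsto\alpha(t,x)$ for $\lambda$-a.e.\ $x$; the clean way is to set $\tilde\alpha(t,x)\defeq\alpha_0(x)+\int_0^t\beta(s,x)\,\d s$ (well defined since $\beta\in L^1(\d t\otimes\d\lambda)$ by Cauchy--Schwarz from finite cost), and then check $\tilde\alpha=\alpha$ in $L^1$ by testing against tensor products.  Second, the substitution $u=\sqrt{\alpha}$ with $u$ absolutely continuous is not automatic, since $\sqrt{\cdot}$ fails to be Lipschitz at $0$; one needs the (standard) lemma that if $\alpha\ge 0$ is AC with $\int_0^1\alpha'^2/\alpha\,\d t<\infty$ then $\sqrt{\alpha}\in W^{1,2}(0,1)$ with derivative $\alpha'/(2\sqrt{\alpha})$ a.e.\ (using that $\alpha'=0$ a.e.\ on $\{\alpha=0\}$).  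With these two facts in hand your argument is complete.
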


\begin{proof}
First notice that the expression \eqref{FR geodesic} is not ambiguous since it is positively homogeneous as a function of $(\rho_0,\rho_1)$. The fact that $d_{FR}$ defines a metric is proven from the same arguments as in \thref{WF defines a metric}. As for the proof of uniqueness of geodesics, we organize the proof as follows. Posing $\nu \defeq \rho_0 + \rho_1$,  we will first show that the geodesics are absolutely continuous w.r.t.\ $\nu \otimes \d t$, allowing to restrict the problem to trajectories which time marginals are $L^1(\d \nu)$. Using convex duality, we then show that \eqref{FR geodesic} is a geodesic, and this allow us to exhibit an isometric injection (the square root) into $L^2(\d\nu)$, from which we deduce uniqueness. 

\paragraph{i. Duality.}
Written explicitly, the constraint in \thref{def FR} is equivalent to
\begin{equation}
\label{continuity FR}
\int_0^1 \int_{\Omega} \partial_t \varphi \d \rho + \int_0^1 \int_{\Omega} \varphi \d \Z = \int_{\Omega}  \varphi(1,\cdot) \d \rho_1 - \int_{\Omega} \varphi(0,\cdot)\d  \rho_0 \, 
\end{equation}
for all test functions $\varphi \in C^1( [0,1] \times \Omega)$.
Similarly as in \thref{existence}, we obtain that the problem defining the Fisher-Rao metric is the dual of
\[
\sup_{\varphi \in C^1( [0,1] \times \Omega)} \int_{\Omega} \varphi(1,\cdot) \d\rho_1 - \int_{\Omega} \varphi(0,\cdot) \d\rho_0 
\]
subject to, for all $(t,x)\in [0,1] \times \Omega$,
\[
 (\partial_t \varphi(t,x), \varphi(t,x))\in B_{FR}  \quad \text{where} \quad  B_{FR} = \left\{ (a,b) : a+\frac12 b^2 \leq 0 \right\}  \, .
\]
By the Fenchel-Rockafellar theorem, we obtain the existence of minimizers and standard arguments show that geodesics are constant speed and that $d_{FR}$ defines a metric on $\mathcal{M}_+$.

\paragraph{ii. Stability of geodesics.}
Take $\mu=(\rho,\Z)$ a minimizer for $d_{FR}(\rho_0,\rho_1)$. First of all, we can always construct paths of finite energy (take \eqref{FR geodesic} for instance), so it holds $D_{FR}(\mu)< + \infty$ and thus $\Z \ll \rho$. As a consequence, we can disintegrate $\mu$ in time. Now decompose $\mu_t$ w.r.t.\ $\nu \defeq \rho_0 + \rho_1$ as $\mu_t = (r(t,\cdot), z(t,\cdot)) \nu + (\rho^{\perp}, \Z^{\perp})$. We remark that $(r(t,\cdot), z(t,\cdot)) \nu$ satisfies the constraints \eqref{continuity FR} and is thus a minimizer. Thus, $\Z^{\perp} = 0$ because $\mu$ is also a minimizer and consequently, $\rho^{\perp} = 0$ by condition \eqref{continuity FR}. Thus, $\mu= (r,z) \nu \otimes \d t$, with $r(t,\cdot), \, z(t,\cdot) \in L^1(\d \nu)$ for all $t\in [0,1]$.

\paragraph{iii. Value of $d_{FR}$.} For the moment, assume that $\rho_0 = r_0 \nu$, $\rho_1 = r_1 \nu$ and that there exists $\kappa>0$ such that $\nu$ a.e.,
\[
\frac{1}{\kappa} \leq \frac{r_0}{r_1} \leq \kappa\, . \tag{A1}
\]
 Consider $\mu=(\rho,\Z)$ where $\rho$ is defined as in \eqref{FR geodesic}, i.e.
 \begin{align*}
 \rho &= (t\sqrt{r_1} +(1-t)\sqrt{r_0})^2 \nu \otimes \d t\\
 \Z    &= 2(\sqrt{r_1}-\sqrt{r_0})(t\sqrt{r_1}-(1-t) \sqrt{r_0}) \nu \otimes \d t\, .
 \end{align*}
 This couple satisfies the constraints \eqref{continuity FR} and $D_{FR}(\mu)=2\int_{\Omega}(\sqrt{\rho_1}-\sqrt{\rho_0})^2$. 
Now consider 
\[
\varphi^* \defeq \zeta/\rho= 
\begin{cases}
0 & \text{if $r_0(x) = r_1(x)$,} \\
\frac{2}{t-t_0(x)} & \text{otherwise.}
\end{cases}
\] with $t_0(x)=\sqrt{r_0}\left( \sqrt{r_0}-\sqrt{r_1} \right)^{-1}$. Under the assumption (A1), there exists $\epsilon>0$ such that for all $x\in \Omega$, $t_0(x) \notin [-\epsilon,1+\epsilon]$ and thus, for all $t\in[0,1]$, $\varphi^*(t,\cdot)$ is bounded and belongs to $L^1(\d \nu)$. Also, for all $x\in \Omega$, $\varphi^*(\cdot,x)\in C^1([0,1])$. But, in general, $\varphi^* \notin C^1( [0,1] \times \Omega)$ as it lacks regularity w.r.t.\ the space variable.

We introduce thus its regularized version: $\varphi^{\epsilon} \defeq \eta^{\epsilon} \ast \varphi^*$, where $\eta^{\epsilon}(t,x)\defeq\epsilon^{-d}\alpha(x/\epsilon)$ with $\alpha \in C_c^{\infty}((-1/2,1/2)^d)$, $\alpha \geq 0$, $\int \alpha = 1$ and $\alpha$ even.
By convexity of $B_{FR}$, we have for all $(t,x)\in [0,1] \times \Om$, 
$(\partial_t \varphi^*, \varphi^*)(t,x) \in B_{FR} \Rightarrow (\eta^{\epsilon} \ast \partial_t \varphi^*, \eta^{\epsilon} \ast \varphi^*)(t,x) \in B_{FR} \Rightarrow (\partial_t \varphi^{\epsilon}, \varphi^{\epsilon})(t,x)   \in B_{FR}$.
Thus
\begin{align*}
d^2_{FR}(\rho_0,\rho_1) 
&\geq \lim_{\epsilon \rightarrow 0} \int_{\Omega} \varphi^{\epsilon}(1,\cdot) \d \rho_1 - \int_{\Omega} \varphi^{\epsilon}(0,\cdot) \d \rho_0 \\
&= \int_{\Omega} \varphi^*(1,\cdot) \d \rho_1 - \int_{\Omega} \varphi^*(0,\cdot) \d \rho_0 \\
&= 2\int_{\Omega} (\sqrt{\rho_1}-\sqrt{\rho_0})^2\, .
\end{align*}
As this value is also an upper bound, this shows that, under (A1), $d^2_{FR}(\rho_0,\rho_1) = 2\int_{\Omega} (\sqrt{\rho_1}-\sqrt{\rho_0})^2$.
But this result remains true without any assumption on $\rho_0$ and $\rho_1$. Indeed, by introducing $\rho^{\epsilon}_1=\epsilon \rho_0 + (1-\epsilon) \rho_1$ and $\rho^{\epsilon}_0=\epsilon \rho_1 + (1-\epsilon) \rho_0$, the triangle inequality yields
\[
d_{FR}(\rho^{\epsilon}_0,\rho^{\epsilon}_1) - d_{FR}(\rho^{\epsilon}_1,\rho_{1}) \leq
d_{FR}(\rho^{\epsilon}_0,\rho_{1}) \leq
d_{FR}(\rho^{\epsilon}_0,\rho^{\epsilon}_1) + d_{FR}(\rho^{\epsilon}_1,\rho_{1}) 
\]
and $\lim_{\epsilon \rightarrow 0} d_{FR}(\rho^{\epsilon}_1,\rho_{1} ) = 0$ because we have a vanishing upper bound. Hence $d^2_{FR}(\rho^{\epsilon}_0,\rho_{1}) = 2\int_{\Omega} (\sqrt{\rho_1}-\sqrt{\rho_0^{\epsilon}})^2$.  Repeating this operation for $\rho_0^{\epsilon}\rightarrow \rho_0$ we have that $d^2_{FR}(\rho_0,\rho_1) = 2\int_{\Omega} (\sqrt{\rho_1}-\sqrt{\rho_0})^2$ for $\rho_0, \rho_1 \in \mathcal{M}_+(\Omega)$.

\paragraph{iv. Isometric injection.} Finally, on the space $\mathcal{M}_{\nu} = \{ \rho \in \mathcal{M}_+(\Omega) : \rho \ll \nu\}$, the map  
\[
\left|
\begin{array}{ccc}
(\mathcal{M}_{\nu}, d_{FR}) & \rightarrow & (L^2(\d \nu), \Vert \cdot \Vert_2) \\
\rho = \alpha\, \nu & \mapsto & \sqrt{2\alpha}
\end{array}
\right.
\]
is an isometric injection. As geodesics in $L^2(\d \nu)$ are unique, uniqueness holds for geodesics in $\mathcal{M}_{\nu}$  and thus for geodesics in $\mathcal{M}_+(\Omega)$.
\end{proof}
The following lemma motivates the expressions for $d_{gBB}$ and $d_{FR}$.

\begin{lemma}[Alternative characterizations]
\thlabel{origin limit metrics}
For all $(\rho_0,\rho_1)\in \mathcal{M}_+(\Om)$ we have
\begin{align*}
d_{gBB}(\rho_0,\rho_1)^2 &=  \min_{(\rho,\M)\in \mathcal{A}_{FR}}  D_{BB}(\rho,\M) \\
d_{FR}  (\rho_0,\rho_1)^2 &=  \min_{(\rho,\Z)\in \mathcal{A}_{BB}}  D_{FR}(\rho,\Z) 
\end{align*}
where 
\begin{equation}
\label{argmin sets}
\mathcal{A}_{FR} \defeq \argmin_{(\rho,\M,\Z) \in \ccons} D_{FR} (\rho,\Z)
\quad \text{and} \quad
\mathcal{A}_{BB} \defeq \argmin_{(\rho,\M,\Z) \in \ccons} D_{BB} (\rho,\M)\,.
\end{equation}
\end{lemma}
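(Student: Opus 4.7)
The plan is to characterize the two argmin sets $\mathcal{A}_{FR}$ and $\mathcal{A}_{BB}$ explicitly; once this is done, both equalities reduce to the very definitions of $d_{gBB}$ in \thref{def gBB} and of $d_{FR}$ in \thref{def FR}.

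\textbf{The set $\mathcal{A}_{BB}$ and the second equality.} Since $D_{BB}\ge 0$, and since the value $0$ is attained by picking any reasonable time-interpolation $\rho$ between $\rho_0$ and $\rho_1$ together with $\M=0$ and $\Z=\partial_t \rho$ (for instance the linear interpolation, already used in the remarks following \thref{continuity equation}), the minimum in the definition of $\mathcal{A}_{BB}$ equals $0$. A triplet $(\rho,\M,\Z)\in\ccons$ attains $D_{BB}=0$ iff $\M=0$: writing $\M=\beta\rho+\M^{\perp}$ with $\M^{\perp}\perp\rho$, the integrand $\fonc(\rho,\M,0)$ contributes $|\beta|^2\rho/2$ in the absolutely continuous part and $+\infty$ through the recession term as soon as $\M^{\perp}\neq 0$, by definition of $\fonc$. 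Hence $\mathcal{A}_{BB}=\{(\rho,0,\Z)\in\ccons\}$ and the second equation becomes the defining formula of $d_{FR}^2$.

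\textbf{The set $\mathcal{A}_{FR}$.} This is the main technical step; I claim $\mathcal{A}_{FR}=\{(\rho,\M,\Z)\in\ccons: \Z=g\rho\}$, with $g$ the rate of growth of \eqref{rate of growth}. For any $(\rho,\M,\Z)\in\ccons$ with $D_{FR}<+\infty$ one has $\Z\ll\rho$, $\rho$ is disintegrable in time, and $\dot m(t)=\Z_t(\Om)$ for $m(t):=\rho_t(\Om)$ (remark following \thref{continuity equation}). A pointwise-in-time Cauchy--Schwarz estimate gives the lower bound
\[
\int_{\Om} \frac{\Z_t^2}{2\rho_t}\;\ge\; \frac{\Z_t(\Om)^2}{2\rho_t(\Om)}=\frac{\dot m(t)^2}{2m(t)},
\]
with equality iff $\Z_t = g(t)\rho_t$ for a function $g(t)$ independent of space, equivalently $g(t)=\dot m(t)/m(t)$. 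Integrating in time and then minimizing over positive curves $m$ with $m(0)=\rho_0(\Om)$ and $m(1)=\rho_1(\Om)$ yields the one-dimensional Fisher--Rao problem whose unique solution is $m(t)=(t\sqrt{\rho_1(\Om)}+(1-t)\sqrt{\rho_0(\Om)})^2$; for this $m$ the rate $g=\dot m/m$ is precisely \eqref{rate of growth}. Conversely, any admissible triplet with $\Z=g\rho$ forces $m$ to solve $\dot m=g\,m$ with $m(0)=\rho_0(\Om)$, recovering the above curve, so both lower bounds are saturated and the triplet minimizes $D_{FR}$ over $\ccons$. Non-emptiness of $\mathcal{A}_{FR}$ is ensured by \thref{th : link gBB}, which produces explicit such triplets from a Wasserstein geodesic between mass-rescaled endpoints.

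\textbf{First equality and main obstacle.} Substituting the characterization of $\mathcal{A}_{FR}$,
\[
\min_{(\rho,\M)\in\mathcal{A}_{FR}} D_{BB}(\rho,\M) = \inf_{(\rho,\M,g\rho)\in\ccons} D_{BB}(\rho,\M) = d_{gBB}(\rho_0,\rho_1)^2,
\]
by the very definition in \thref{def gBB}. The main obstacle is the rigorous measure-theoretic treatment of the pointwise Cauchy--Schwarz estimate above (handling possible singular parts of $\Z$ with respect to $\rho$ via the homogeneity and recession properties of $\fonc$, as in the existence proof of \thref{existence}), together with the degenerate boundary cases $\rho_0(\Om)=0$ or $\rho_1(\Om)=0$ where $g$ becomes singular at an endpoint of $[0,1]$; these can be handled by the same approximation argument used at the end of the proof of \thref{uniqueness FR}.
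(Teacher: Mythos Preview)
Your characterization of $\mathcal{A}_{BB}$ is identical to the paper's, and your conclusion for $\mathcal{A}_{FR}$ is also the same set $\{(\rho,\M,\Z)\in\ccons:\Z=g\rho\}$. The route you take to the latter, however, is genuinely different.

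The paper argues by convex duality: it introduces the set $B_{FR}=\{(a,b,c):a+c^2/2\le 0,\ b=0\}$, writes down the corresponding subdifferential of $D_{FR}$, and then verifies that the spatially constant function $\varphi(t,x)=g(t)$ satisfies $\partial_t\varphi+\varphi^2/2=0$, $\nabla\varphi=0$, $\varphi\rho=\Z$. Sufficiency follows from the analogue of \thref{certificate}; necessity is obtained by the primal--dual gap argument (the chain $D_{FR}(\mu)\ge\int\partial_t g\,\d\rho+\int g\,\d\Z=\dots=D_{FR}(\mu)$ forces $(\partial_t g,g)\in\partial f_1$ pointwise, hence $\Z=g\rho$).

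Your argument instead applies Jensen's inequality in space, $\int_\Om \Z_t^2/(2\rho_t)\ge \dot m(t)^2/(2m(t))$ with equality iff $\d\Z_t/\d\rho_t$ is spatially constant, and then reduces to the one--dimensional Fisher--Rao problem on the total--mass curve $m(t)=\rho_t(\Om)$, whose unique geodesic yields exactly the rate $g$ of \eqref{rate of growth}. This is more elementary: it avoids the duality machinery entirely and makes transparent \emph{why} the optimal growth rate is spatially constant (equality in Jensen) and why it has the specific form (one--dimensional geodesic). The paper's approach, by contrast, stays within the convex--analytic framework developed in Section~\ref{sec:optimality condition}, which is more systematic and would adapt readily to other penalizations of the source where a pointwise Jensen reduction is not available. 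Your stated obstacles (measure--theoretic care for the Jensen step when $\rho_t(\Om)=0$ at some times, and the endpoint degeneracies $\rho_i(\Om)=0$) are real but minor; the paper handles the degenerate cases by appealing to \thref{rescaled measures} rather than by the approximation you suggest, but either works.
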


\begin{proof}
What we need to show is
\begin{align*}
\mathcal{A}_{BB} &= \left\{ (\rho,\Z) : (\rho,0,\Z) \in \ccons \right\} \\
\mathcal{A}_{FR} &= \left\{ (\rho,\M) : (\rho,\M,g \rho) \in \ccons  \right\}\, ,
\end{align*}
where the \emph{rate of growth} $g$ is defined in \eqref{rate of growth}. The first equality is easy because $D_{BB}(\rho, \omega)=0$ if and only if $\omega=0$ a.e.

The second equality requires slightly more work. In the case when $\rho_0(\Om)=\rho_1(\Om)$, $D_{FR}$ is minimized if and only if $\Z=0$ as previously. The case $\rho_0(\Om)=0$ or $\rho_1(\Om)=0$ is dealt with in in \thref{rescaled measures} (case $\alpha=0$). Otherwise, let us determine $\mathcal{A}_{FR}$ by adapting the optimality condition in \thref{certificate}. The argument is more clear if we consider $D_{FR}$ as a function of  $\mu=(\rho,\M,\Z)$ instead of just $(\rho,\Z)$ as $\M$ is a variable of the problem. We thus introduce
\[
B_{FR} \defeq \left\{ (\alpha,\beta,\gamma)\in \R \times \R^d \times \R : \alpha +\frac{\gamma^2}{2} \leq 0, \; \beta =0 \right\}
\]
and it can be shown (similarly as in Section \ref{sec:optimality condition}) that $\D D_{FR} (\rho,\M,\Z)$ is equal to
\[
 \left\{ (\alpha,\beta, \gamma) \in C([0,T] \times \bar{\Om}; B_{FR}) :
\alpha+\frac{\gamma^2}{2} = 0 -\text{ $\rho$ a.e.\ and $\gamma \rho = \Z$ }
\right\}.
\]
Adapting the sufficient optimality condition given in \thref{certificate}, it holds that if $\mu\in \ccons$ and if
there exists $\varphi \in C^1([0,1] \times \bar{\Om})$ satisfying $(\D_t \varphi, \nabla \varphi, \varphi)\in \D D_{FR} (\mu) $ i.e.
\[
\begin{cases}
 \D_t \varphi + \frac12 \varphi^2 \leq 0 &(\text{with equality $\rho$ a.e.}),\\
\nabla \varphi = 0\, ,\\
\varphi \rho = \Z \, .
\end{cases}
\] 
then $\mu \in \mathcal{A}_{FR}$.
The equality $\rho$ a.e.\ is actually an equality $\d t$ a.e.\ in time as $\rho$ has a positive mass and $\varphi$ is constant in space. All solutions are thus of the form $\varphi : (t,x) \mapsto \frac{2}{t-t_0}$. The integration constant $t_0$ is found by integrating the variations of mass. As $t_0\notin [0,1]$, $\varphi$ is well defined and is equal to $g$ as introduced in \eqref{rate of growth}. Now we prove that this condition is actually necessary: take $(\rho,\M,\Z) \in \mathcal{A}_{FR}$. It holds
\[
D_{FR}(\rho,\M,\Z) \geq \int_0^1 \int_{\Omega} \D_t g \rho + g \Z = \int_{\Omega} g(1)\rho_1 - \int_{\Omega} g(0) \rho_0 = D_{FR}(\rho,\M,\Z)
\]
where we used successively: duality, the fact that $(\rho,\M,\Z)\in \ccons$ and the fact that the primal-dual gap vanishes at optimality. So, the first equality is an equality, and hence $(\D_t g,g)(t,x) \in \D f_1(\d \rho / \d |\rho| , 0, \d \Z / \d | \rho|)(t,x)$ for $|\rho|$ a.e.\ $(t,x)\in [0,1]\times \Om$, which implies that $\Z = g\rho$.
\end{proof}

\subsection{Convergence of geodesics}
\label{sec:limit metrics}

A rigorous treatment of the \emph{limit geodesics} requires some compactness for the set of geodesics when $\delta$ varies. That is why we need the following bounds, adapted from \cite[Proposition 3.10]{dolbeault2009new}.

\begin{lemma}
\thlabel{estimates}
Consider a triplet $\mu=(\rho,\M,\Z)\in \mathcal{M}^{d+2}$ such that $D_{BB}(\rho,\M) <+\infty$ and $D_{FR}(\rho,\Z)<+\infty$. For any non-negative Borel function $\eta$ on $[0,1] \times \bar{\Om}$ we have
\begin{equation}
\int_{[0,1]\times\Om} \eta d|\M| \leq \sqrt2 D_{BB}(\rho,\M)^{\frac12} \left( \int_{[0,1]\times\Om} \eta^2 \d\rho \right)^{\frac12}
\label{estimate1}
\end{equation}
and
\begin{equation}
\int_{[0,1]\times\Om} \eta d|\Z| \leq \sqrt2  D_{FR}(\rho,\Z)^{\frac12} \left( \int_{[0,1]\times\Om} \eta^2 \d\rho \right)^{\frac12}
\label{estimate2}
\end{equation}
Also, similar bounds can be written for $\mu_t$ (the disintegration of $\mu$ w.r.t.\ time) by integrating solely in space.
\end{lemma}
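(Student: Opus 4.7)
The plan is to reduce both inequalities to a straightforward Cauchy--Schwarz argument in $L^2(\rho)$, after first using the finiteness of $D_{BB}$ and $D_{FR}$ to recover absolute continuity of the vector/scalar source measures with respect to $\rho$.

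First I would observe that since $f_{\delta}(x,y,z) = +\infty$ whenever $x = 0$ while $(y,z) \neq (0,0)$, the finiteness of $D_{BB}(\rho,\omega)$ forces $\omega \ll \rho$ and the finiteness of $D_{FR}(\rho,\zeta)$ forces $\zeta \ll \rho$. Writing the Radon--Nikodym densities as $v \defeq \d\omega/\d\rho \in L^2(\rho;\R^d)$ and $g \defeq \d\zeta/\d\rho \in L^2(\rho;\R)$, the homogeneity of $f_\delta$ (as used throughout Section~\ref{sec:theory}) allows one to evaluate the integral functionals against the reference measure $\rho$ itself, giving
\[
D_{BB}(\rho,\omega) = \tfrac{1}{2}\int_{[0,1]\times\Omega} |v|^2 \, \d\rho, \qquad
D_{FR}(\rho,\zeta) = \tfrac{1}{2}\int_{[0,1]\times\Omega} g^2 \, \d\rho.
\]
Note that the a priori finiteness hypothesis guarantees that both $v$ and $g$ lie in $L^2(\rho)$.

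Next I would note that $|\omega| = |v|\rho$ and $|\zeta| = |g|\rho$ as non-negative measures (this is a standard property of Radon--Nikodym derivatives for vector-valued measures). Then, for a non-negative Borel function $\eta$, Cauchy--Schwarz in $L^2(\rho)$ yields
\[
\int \eta \, \d|\omega| = \int \eta\, |v| \, \d\rho \leq \left(\int |v|^2 \, \d\rho\right)^{1/2} \left(\int \eta^2 \, \d\rho\right)^{1/2} = \sqrt{2}\, D_{BB}(\rho,\omega)^{1/2} \left(\int \eta^2 \, \d\rho\right)^{1/2},
\]
which is exactly \eqref{estimate1}. The same argument applied to $|\zeta| = |g|\rho$ gives \eqref{estimate2}.

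For the disintegrated version, I would invoke the disintegration theorem: when $\rho$, $\omega$, $\zeta$ admit time-disintegrations $\rho_t, \omega_t, \zeta_t$ (which they do under the finiteness assumptions, see the remarks following \thref{continuity equation}), the densities $v$ and $g$ restricted to each time slice give the Radon--Nikodym derivatives $\d\omega_t/\d\rho_t$ and $\d\zeta_t/\d\rho_t$ for $\d t$-a.e.\ $t$. Repeating the Cauchy--Schwarz step with the spatial integral alone produces the pointwise-in-time analogues. There is no real obstacle here; the only care required is in the first step, making sure that absolute continuity actually follows from finiteness of the functionals, but this is immediate from the structure of $f_\delta$.
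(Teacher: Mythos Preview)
Your proof is correct and follows essentially the same route as the paper: both arguments first extract absolute continuity of $\omega$ and $\zeta$ with respect to $\rho$ from the finiteness of the functionals, and then apply Cauchy--Schwarz. The only cosmetic difference is that the paper works with a general dominating measure $\lambda$ and writes $|\omega^\lambda| = \sqrt{2\,f_\delta(\rho^\lambda,\omega^\lambda,0)\,\rho^\lambda}$ before applying Cauchy--Schwarz in $L^2(\d\lambda)$, whereas you take $\rho$ itself as the reference measure (legitimate by homogeneity) and apply Cauchy--Schwarz directly in $L^2(\rho)$; the computations are the same once unwound.
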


\begin{proof}
Take $\lambda\in \mathcal{M}_+$ such that $|\mu| \ll \lambda$ and decompose $\d\mu$ as $(\rho^{\lambda}, \M^{\lambda}, \Z^{\lambda}) \d\lambda$. As $D_{BB}(\rho,\M)<+\infty$ and $D_{FR}(\rho,\Z)<+\infty$, we have $\M \ll \rho$ and $\Z \ll \rho$. It follows
\begin{align*}
\int_{[0,1]\times \Om} \eta \d |\M| 
&=  \int_{[0,1]\times \Om} \left( 2 \eta^2 \fonc(\rho^{\lambda},\M^{\lambda},0)\rho^{\lambda} \right)^{\frac12} \d\lambda \\
&\leq \sqrt2 \left( \int_{[0,1]\times \Om} \eta^2 \rho^{\lambda} \d\lambda \right)^{\frac12} 
\left( \int_{[0,1]\times \Om} \fonc(\rho^{\lambda}, \M^{\lambda},0) \d\lambda \right)^{\frac12}
\end{align*}
by Cauchy-Schwartz inequality on the scalar product $\langle \cdot, \cdot \rangle_{L^2(\d\lambda)}$. Inequality \eqref{estimate1} follows and \eqref{estimate2} is derived similarly.
\end{proof}

In the following lemma, we derive a total variation bound on $\mu$ which only depends on $D_{BB}(\mu)$ and $D_{FR}(\mu)$.

\begin{lemma}[Uniform bound]
\thlabel{upperbounds}
Let $\rho_0, \rho_1 \in \mathcal{M}_+(\Om)$ and $M\in \R_+$. There exists $C\in \R_+$ satisfying  $|\mu|([0,1]\times \Om) < C$, for all $\mu=(\rho,\M,\Z)\in \ccons$ such that $D_{BB}(\rho,\M) <M$ and $D_{FR}(\rho,\Z) <M$.
\end{lemma}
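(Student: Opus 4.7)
The strategy is to bound each of the three components $\rho, \omega, \zeta$ separately, noting that $|\mu|([0,1]\times\Omega) \leq |\rho|([0,1]\times\Omega) + |\omega|([0,1]\times\Omega) + |\zeta|([0,1]\times\Omega)$. Applying \thref{estimates} with the test function $\eta \equiv 1$ immediately yields
\[
|\omega|([0,1]\times\Omega) \leq \sqrt{2M}\,|\rho|([0,1]\times\Omega)^{1/2}, \qquad |\zeta|([0,1]\times\Omega) \leq \sqrt{2M}\,|\rho|([0,1]\times\Omega)^{1/2},
\]
so it suffices to bound $|\rho|([0,1]\times\Omega) = \int_0^1 m(t)\,\d t$ where $m(t) := \rho_t(\Omega)$; here the time-disintegration is available because $D_{BB}(\rho,\omega) + \delta^2 D_{FR}(\rho,\zeta) < +\infty$ implies $\omega, \zeta \ll \rho$, so that the time marginal of $\rho$ is absolutely continuous (see the remarks after \thref{continuity equation}).

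Next, I would exploit the continuity equation tested against functions constant in space: this gives that $m$ admits $t \mapsto \zeta_t(\Omega)$ as distributional derivative, with $m(0) = \rho_0(\Omega)$, and moreover $m$ has a continuous representative. From the time-slice form of \eqref{estimate2} applied with $\eta \equiv 1$ on $\Omega$, for almost every $t \in [0,1]$,
\[
|m'(t)| = |\zeta_t(\Omega)| \leq \sqrt{2}\,\Big(\int_\Omega f_\delta(\rho_t,0,\zeta_t)\Big)^{1/2} m(t)^{1/2}\cdot \delta^{-1}\sqrt{\delta^2} = \sqrt{2}\, d_t^{1/2}\, m(t)^{1/2},
\]
where I write $d_t := \int_\Omega (\zeta_t^2/(2\rho_t))\cdot (\text{density w.r.t.\ an appropriate reference})$ so that $\int_0^1 d_t\,\d t = D_{FR}(\rho,\zeta) \leq M$. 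This inequality says that the measure $m'$ satisfies $|m'| \leq \sqrt{2}\,d_t^{1/2} m(t)^{1/2}\,\d t$.

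From here I would pass to the square root: on the open set $\{m > 0\}$, the continuous function $m^{1/2}$ is absolutely continuous with $|(m^{1/2})'(t)| \leq |m'(t)|/(2m(t)^{1/2}) \leq d_t^{1/2}/\sqrt{2}$ almost everywhere, and $m^{1/2}$ cannot increase across a zero of $m$ faster than this bound allows (since $m^{1/2}$ is continuous and the bound on $m'$ forces the right derivative at any zero to vanish in the limit). Integrating from $0$ to $t$ and applying Cauchy--Schwarz on $[0,1]$ yields
\[
m(t)^{1/2} \leq \rho_0(\Omega)^{1/2} + \tfrac{1}{\sqrt{2}}\int_0^t d_s^{1/2}\,\d s \leq \rho_0(\Omega)^{1/2} + \sqrt{M/2},
\]
so $m(t) \leq 2\rho_0(\Omega) + M$ uniformly in $t$, hence $|\rho|([0,1]\times\Omega) \leq 2\rho_0(\Omega) + M$. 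Plugging this into the estimates on $|\omega|$ and $|\zeta|$ gives the claim with, for instance, $C := (2\rho_0(\Omega) + M) + 2\sqrt{2M(2\rho_0(\Omega) + M)} + 1$.

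\textbf{Main obstacle.} The delicate point is to justify the differentiation of $m^{1/2}$ rigorously despite possible zeros of $m$; this requires either arguing by approximation (replacing $m$ by $m + \varepsilon$ and sending $\varepsilon \to 0$) or invoking the continuous representative of $m$ together with the fact that $\zeta_t(\Omega) = 0$ whenever $\rho_t = 0$ (which follows from $\zeta \ll \rho$). Everything else is a clean application of \thref{estimates} and Cauchy--Schwarz.
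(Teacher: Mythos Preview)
Your proof is correct and follows the same overall architecture as the paper: first bound $\rho([0,1]\times\Omega)$ using the differential inequality $|m'(t)| \leq \sqrt{2\,d_t}\,\sqrt{m(t)}$ coming from \thref{estimates}, then control $|\omega|$ and $|\zeta|$ via the same lemma with $\eta\equiv 1$. The one substantive difference is how you extract the mass bound from that differential inequality. You pass to $(m^{1/2})'$ and integrate, which yields the pointwise estimate $m(t)\leq 2\rho_0(\Omega)+M$ but forces you to deal with possible zeros of $m$ (your ``main obstacle''). The paper instead integrates the inequality $m'(s)\leq \sqrt{2\,d_s}\,\sqrt{m(s)}$ from $0$ to $t$, applies Cauchy--Schwarz to obtain $m(t)-\rho_0(\Omega)\leq \sqrt{2M}\,\sqrt{\rho([0,1]\times\Omega)}$, and then integrates once more in $t$ to get a quadratic inequality in $\sqrt{\rho([0,1]\times\Omega)}$. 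This double-integration trick sidesteps the square-root differentiation entirely---no zeros to worry about---at the price of bounding only the time integral of $m$ rather than its supremum, which is all the lemma requires.
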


\begin{proof}
We start by giving a bound on $\rho([0,1] \times \Om)=\int_0^1 \rho_t(\Om) \d t$. Recall (from the remarks after \thref{continuity equation}) that the map $t \mapsto \rho_t(\Om)$ admits the distributional derivative $\rho'_t (\Om) = \zeta_t(\Om)$, for almost every $t\in[0,1]$. It follows, by \thref{estimates}
\[
\rho'_t (\Om) \leq |\zeta_t \vert(\Om) \leq \sqrt{2 D_{FR}(\rho_t,\Z_t) \rho_t(\Om)}
\]
where $D_{FR}(\mu_t)$ denotes $\frac{1}{\delta^2} \int_{\Om} \fonc(\frac{d(\rho_t,0,\Z_t)}{\d\lambda_t})\d\lambda_t$ where $\lambda_t$ is such that $\mu_t \ll \lambda_t$. By integrating in time and applying the Cauchy-Schwartz inequality
\[
\rho_t(\Om)-\rho_0(\Om) 
\leq \sqrt{2D_{FR}(\rho,\Z)} \sqrt{ \rho([0,1] \times \Om)} .
\]
We integrate again in time to obtain 
\[  
	\rho([0,1] \times \Om) \leq \rho_0 (\Om) + \sqrt{2D_{FR}(\rho,\Z)} \sqrt{ \rho([0,1] \times \Om)} 
\]
which implies that $ \rho([0,1] \times \Om)$ is bounded by a constant depending only on $D_{FR}(\rho,\Z)$ and $\rho_0(\Om)$. 
The conclusion follows by bounding  $|\Z|([0,1]\times \Om)$ and $|\M|([0,1]\times \Om)$ thanks to \thref{estimates}.
\end{proof}

Let us now recall a well known property of weighted optimization problems.

\begin{lemma}[Properties of weighted optimization problems]
\thlabel{weighted optim}
Let $f$ and $g$ be two proper l.s.c.\ functions on a compact set $\mathcal{C}$ with values in $\R\cup \{+\infty \}$ such that $h = f + \delta g$ admits a minimum in $\mathcal{C}$ for all $\delta \in ]0,±\infty[$. Let $(\delta_n)_{n\in\N}$ be a sequence in $]0,+\infty[$ and $(x_n)_{n\in \N}$ a sequence of minimizers of $h_n = f+\delta_n g$. We introduce $\mathcal{A}(f) \defeq \argmin_{x\in \mathcal{C}} f(x)$ and $\mathcal{A}(g)$ similarly.
\begin{enumerate}
\item For all $n\in \mathbb{N}$, $x^f\in \mathcal{A}(f)$ and $x^g\in \mathcal{A}(g)$ we have $f(x_n)\leq f(x^g)$ and $g(x_n) \leq g(x^f)$,
\item If  $\delta_n \underset{n \to \infty}{\to} 0$, then $(x_n)_{n\in \N}$ admits an accumulation point in $ \argmin_{\mathcal{A}(f)} g $,
\item If  $\delta_n \underset{n \to \infty}{\to} \infty$, then $(x_n)_{n\in \N}$ admits an accumulation point in $\argmin_{\mathcal{A}(g)} f$.
\end{enumerate}
\end{lemma}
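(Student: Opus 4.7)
The plan is to use the defining optimality of $x_n$ for $h_n = f + \delta_n g$ by testing it against two carefully chosen competitors: a minimizer $x^f$ of $f$ and a minimizer $x^g$ of $g$, both of which exist on the compact set $\mathcal{C}$ by lower semi-continuity and Weierstrass's theorem. First I would prove item (1) directly: the inequality $h_n(x_n) \leq h_n(x^g)$ rewrites as $f(x_n) - f(x^g) \leq \delta_n (g(x^g) - g(x_n))$, whose right-hand side is non-positive since $g(x^g) = \min g$, hence $f(x_n) \leq f(x^g)$. The bound $g(x_n) \leq g(x^f)$ follows from the symmetric manipulation of $h_n(x_n) \leq h_n(x^f)$, using this time $f(x_n) \geq \min f = f(x^f)$.

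For items (2) and (3), the strategy is to combine the two optimality inequalities so that the ``dominating'' functional along $(x_n)$ converges to its global minimum on $\mathcal{C}$, and then to invoke compactness and lower semi-continuity. Concretely, for item (2), item (1) together with $h_n(x_n) \leq h_n(x^f)$ gives
\[
0 \leq f(x_n) - \min f \leq \delta_n (g(x^f) - g(x_n)) \leq \delta_n (g(x^f) - \min g),
\]
so $f(x_n) \to \min f$ as $\delta_n \to 0$. The compactness of $\mathcal{C}$ then provides an accumulation point $x^*$ of $(x_n)$ along some subsequence $x_{n_k} \to x^*$, and the l.s.c.\ of $f$ yields $f(x^*) \leq \liminf_k f(x_{n_k}) = \min f$, i.e.\ $x^* \in \mathcal{A}(f)$. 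To further conclude $x^* \in \argmin_{\mathcal{A}(f)} g$, I would reuse item (1): for every $y \in \mathcal{A}(f)$ one has $g(x_n) \leq g(y)$, so by l.s.c.\ of $g$, $g(x^*) \leq \liminf_k g(x_{n_k}) \leq \inf_{\mathcal{A}(f)} g$, and the reverse inequality is automatic since $x^* \in \mathcal{A}(f)$. Item (3) is obtained by the symmetric argument, dividing $h_n$ by $\delta_n$ and letting $\delta_n \to \infty$, which exchanges the roles of $f$ and $g$.

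The argument carries no serious obstacle; the one point that requires attention is the finiteness of the error terms $g(x^f) - \min g$ and (in the symmetric case) $f(x^g) - \min f$ appearing in the bounds. This is precisely guaranteed by the properness of $f$ and $g$, which ensures that their minima on the compact $\mathcal{C}$ are finite real numbers (so that the competitors $x^f, x^g$ remain admissible test points for $h_n$). Modulo this observation, the proof is the standard ``double optimality'' technique for weighted minimization problems.
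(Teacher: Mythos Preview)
Your proof is correct and spells out exactly the ``elementary inequality manipulations'' the paper leaves to the reader: test $h_n(x_n)$ against $h_n(x^f)$ and $h_n(x^g)$, then combine compactness with lower semicontinuity to pass to the limit. One small caveat on your closing remark: properness of $g$ ensures $\min g$ is finite but does not by itself guarantee $g(x^f)<+\infty$; this finiteness is really an implicit hypothesis of the lemma (and holds in the paper's application), so your bound $\delta_n(g(x^f)-\min g)\to 0$ is valid under the intended assumptions.
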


\begin{proof}
All those results are derived using elementary inequality manipulations.
\end{proof}
The following lemma gives total variation bounds on geodesics that are independent of $\delta$, which will be helpful in order to explicit the limit models.

\begin{lemma}[A relative compactness result]
\thlabel{bound independant delta}
Let $\rho_0$ and $\rho_1$ in $\mathcal{M}_+(\Om)$ and consider $\mathcal{G}_{\rho_0}^{\rho_1} \defeq  \bigcup_{\delta >0} \arg \min \ifonc(\rho_0,\rho_1)$. For all $\mu=(\rho,\M,\Z)\in \mathcal{G}_{\rho_0}^{\rho_1}$, the following bounds, independent of $\delta$, hold
\begin{align}
 D_{BB}(\rho,\M) & \leq d^2_{gBB}(\rho_0,\rho_1)\, ,
\label{ineqonDBB}\\
 D_{FR}(\rho,\Z) & \leq d^2_{FR}(\rho_0,\rho_1).
\label{ineqonDFR}
\end{align}
Moreover, $\mathcal{G}_{\rho_0}^{\rho_1}$ is relatively compact.
\end{lemma}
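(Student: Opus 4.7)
The key observation is that any $\mu = (\rho, \M, \Z) \in \mathcal{G}_{\rho_0}^{\rho_1}$ is, by definition, a minimizer over $\ccons$ of the weighted sum $\ifonc(\mu) = D_{BB}(\rho,\M) + \delta^2 D_{FR}(\rho,\Z)$ for some $\delta > 0$. My strategy is to test this minimality against two carefully chosen competitors: one drawn from $\mathcal{A}_{FR}$ (to control $D_{BB}$) and one drawn from $\mathcal{A}_{BB}$ (to control $D_{FR}$). Morally, this reproduces the abstract inequalities of part~1 of \thref{weighted optim}, whose argument does not use the compactness hypothesis stated there.

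First I would derive \eqref{ineqonDBB}. The alternative characterization of \thref{origin limit metrics} combined with \thref{th : link gBB} supplies a triplet $\mu^{FR} = (\rho', \M', g\rho') \in \ccons$ in $\mathcal{A}_{FR}$ with $D_{BB}(\mu^{FR}) = d^2_{gBB}(\rho_0,\rho_1)$. Optimality of $\mu$ for $\ifonc$, combined with the $\ccons$-minimality of $\mu^{FR}$ for $D_{FR}$, gives
\[
D_{BB}(\rho, \M) + \delta^2 D_{FR}(\rho, \Z) \leq D_{BB}(\mu^{FR}) + \delta^2 D_{FR}(\mu^{FR}) \leq D_{BB}(\mu^{FR}) + \delta^2 D_{FR}(\rho, \Z),
\]
and subtracting $\delta^2 D_{FR}(\rho, \Z)$ yields \eqref{ineqonDBB}. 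The bound \eqref{ineqonDFR} follows by an entirely symmetric argument, taking as competitor the explicit Fisher-Rao geodesic of \thref{uniqueness FR}, which lies in $\mathcal{A}_{BB}$ and satisfies $D_{BB} = 0$, $D_{FR} = d^2_{FR}(\rho_0,\rho_1)$; the same subtraction trick then gives $D_{FR}(\rho,\Z) \leq d^2_{FR}(\rho_0,\rho_1)$.

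Once the two energy bounds are in hand, the relative compactness statement is immediate: $\mathcal{G}_{\rho_0}^{\rho_1}$ sits inside the family of triplets of $\ccons$ satisfying $\max(D_{BB},D_{FR}) \leq M$ for $M = \max(d^2_{gBB}(\rho_0,\rho_1), d^2_{FR}(\rho_0,\rho_1))$, so \thref{upperbounds} supplies a uniform total variation bound $|\mu|([0,1] \times \Om) \leq C$ independent of $\delta$. Since bounded subsets of $\mathcal{M}([0,1]\times\Om)$ are relatively sequentially compact for the weak* topology (recalled in the preliminaries), the conclusion follows in $\mathcal{M}^{d+2}$. I expect no serious obstacle; the only mild care to take is checking that the two competing minimizers $\mu^{FR}$ and $\mu^{BB}$ do exist as full triplets in $\ccons$ (as opposed to only being well-defined pairs), which is exactly what the earlier results of Section~\ref{sec-interpolation} establish.
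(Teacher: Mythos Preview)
Your proof is correct and follows essentially the same route as the paper: the paper obtains \eqref{ineqonDBB}--\eqref{ineqonDFR} by invoking part~1 of \thref{weighted optim} together with the characterizations in \thref{origin limit metrics}, and then applies \thref{upperbounds} for the uniform TV bound and weak* relative compactness---exactly the steps you carry out. Your write-up is in fact slightly more explicit, since you unfold the inequality manipulation behind \thref{weighted optim}-1 (and correctly note that it does not require the compactness hypothesis stated there, avoiding any appearance of circularity).
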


\begin{proof}
Inequalities \eqref{ineqonDBB} and \eqref{ineqonDFR} are direct applications of \thref{weighted optim}-1, to the characterizations of $d_{gBB}$ and $d_{FR}$ given in \thref{origin limit metrics}.
As these inequalities do not depend on $\delta$, we can deduce from \thref{upperbounds} a uniform TV bound on the elements of $\mathcal{G}_{\rho_0}^{\rho_1}$. More explicitly, there exists $C>0$ such that for all $\mu\in \mathcal{G}_{\rho_0}^{\rho_1}$,
\[
|\mu|([0,1] \times \Om) < C
\]
This implies that $\mathcal{G}_{\rho_0}^{\rho_1}$ is relatively compact for the weak* topology.
\end{proof}

We can now state the main result of this section.

\begin{theorem}[Limit models]
\thlabel{limitmodels}
Let $\rho_0$ and $\rho_1$ in $\mathcal{M}_+(\Om)$ and let $(\rho^n,\M^n,\Z^n)_{n\in \N}$ be a sequence of minimizers for the distance $\WF_{\delta_n}$ between $\rho_0$ and $\rho_1$ with $(\delta_n)_n \in \R_+^{\N}$.

A. If  $\delta_n \underset{n \to \infty}{\to} +\infty$ then $(\rho^n,\M^n,\Z^n)_{n\in \N}$ weak* converges (up to a subsequence) to a minimizer for $d_{gBB}$.

B. If  $\delta_n \underset{n \to \infty}{\to} 0$ then $(\rho^n,\M^n,\Z^n)_{n\in \N}$ weak* converges (up to a subsequence) to a minimizer for $d_{FR}$.
\end{theorem}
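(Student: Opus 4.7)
The plan is to combine three ingredients: (i) the relative compactness of the set of geodesics established in Lemma \thref{bound independant delta}, (ii) the weak* lower semicontinuity of $\ifonc$, $D_{BB}$ and $D_{FR}$ (they are all of the form $\int f \, \d\mu$ for $f$ convex, lsc and positively $1$-homogeneous, hence weak* lsc as a supremum of continuous linear functionals on $\mathcal{M}^{d+2}$), and (iii) the characterization of $d_{gBB}^2$ and $d_{FR}^2$ as successive minimization problems over $\ccons$ given by Lemma \thref{origin limit metrics}. The skeleton of the argument is essentially that of Lemma \thref{weighted optim}, but I need to patch the fact that $\ccons$ is not compact: Lemma \thref{bound independant delta} supplies a TV-ball containing $\mathcal{G}_{\rho_0}^{\rho_1}$, which is weak* compact by Banach–Alaoglu, so minimizers may be restricted to this compact set.

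First I would extract from $(\rho^n,\M^n,\Z^n)$ a weak* convergent subsequence (not relabeled) with limit $\mu^\star=(\rho^\star,\M^\star,\Z^\star)$; this is provided by \thref{bound independant delta}. Next I would verify $\mu^\star\in\ccons$: the continuity equation \eqref{continuity weak} tested against $\varphi\in C^1([0,1]\times\bar\Om)$ involves only continuous bounded test functions against the triplet, hence passes to the weak* limit. Then by weak* lower semicontinuity,
\[
D_{BB}(\mu^\star)\leq \liminf_n D_{BB}(\mu^n),\qquad D_{FR}(\mu^\star)\leq \liminf_n D_{FR}(\mu^n).
\]

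For case A ($\delta_n\to+\infty$): by optimality of $\mu^n$, for any competitor $\nu\in\ccons$,
\[
D_{BB}(\mu^n)+\delta_n^2 D_{FR}(\mu^n)\leq D_{BB}(\nu)+\delta_n^2 D_{FR}(\nu).
\]
Picking $\nu$ a minimizer of $D_{FR}$ over $\ccons$ (which exists by \thref{uniqueness FR}), one has $D_{FR}(\nu)=d_{FR}^2$, and dividing by $\delta_n^2$ and letting $n\to\infty$ gives $D_{FR}(\mu^\star)\leq d_{FR}^2$, so $\mu^\star\in\mathcal{A}_{FR}$. Now, for any $\nu\in\mathcal{A}_{FR}$, comparing again with $\mu^n$ and using $D_{FR}(\nu)=d_{FR}^2\leq D_{FR}(\mu^n)$ yields $D_{BB}(\mu^n)\leq D_{BB}(\nu)$; passing to the liminf, $D_{BB}(\mu^\star)\leq D_{BB}(\nu)$. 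Hence $\mu^\star$ is a minimizer of $D_{BB}$ over $\mathcal{A}_{FR}$, i.e.\ a minimizer for $d_{gBB}$ by \thref{origin limit metrics}.

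For case B ($\delta_n\to 0$): the symmetric argument works. Comparing with any $\nu\in\ccons$,
\[
D_{BB}(\mu^n)\leq D_{BB}(\nu)+\delta_n^2\bigl(D_{FR}(\nu)-D_{FR}(\mu^n)\bigr)\leq D_{BB}(\nu)+\delta_n^2 D_{FR}(\nu),
\]
so at the liminf $D_{BB}(\mu^\star)\leq D_{BB}(\nu)$, hence $\mu^\star\in\mathcal{A}_{BB}$. Then, for any $\nu\in\mathcal{A}_{BB}$, optimality of $\mu^n$ and $D_{BB}(\nu)\leq D_{BB}(\mu^n)$ give $\delta_n^2 D_{FR}(\mu^n)\leq \delta_n^2 D_{FR}(\nu)+\bigl(D_{BB}(\nu)-D_{BB}(\mu^n)\bigr)\leq \delta_n^2 D_{FR}(\nu)$; dividing by $\delta_n^2$, taking the liminf and using lsc gives $D_{FR}(\mu^\star)\leq D_{FR}(\nu)$. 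By \thref{origin limit metrics}, $\mu^\star$ is a minimizer for $d_{FR}$.

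The step I expect to be most delicate is the rigorous use of lower semicontinuity on the singular parts of $\mu^n$: the bound $D_{BB}(\mu^n)\leq D_{BB}(\nu)$ in case B requires that $D_{BB}$ really is weak* lsc on the weak* closure of $\mathcal{G}_{\rho_0}^{\rho_1}$; this is ensured because $f_\delta$ is convex, proper, lsc and positively $1$-homogeneous (so its recession function coincides with itself), which places us exactly within the framework of Reshetnyak-type lower semicontinuity for integral functionals on measures, already invoked implicitly in the proof of \thref{existence}. Once this is granted the rest is a direct inequality chase.
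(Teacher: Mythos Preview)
Your proof is correct and follows essentially the same strategy as the paper: the paper simply verifies the hypotheses of \thref{weighted optim} (l.s.c.\ of $D_{BB}$ and $D_{FR}$, and the relative compactness from \thref{bound independant delta}) and applies that lemma as a black box, whereas you unfold the inequality-chasing of \thref{weighted optim} inline and add the explicit check that $\mu^\star\in\ccons$ is preserved under weak* limits. Your version is in fact slightly more careful about the passage from the non-compact constraint set $\ccons$ to a compact one, a point the paper handles only with a one-line remark.
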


\begin{proof}
Let us review the hypothesis of \thref{weighted optim} .
First, $D_{BB}$ and $D_{FR}$ are l.s.c. The existence of minimizers for $\ifonc$ has been shown in \thref{existence} for any positive value of $\delta$. Moreover, as a consequence of \thref{bound independant delta}, those minimizers are in a compact set---just take any compact set which contains the closure of the relatively compact set of all minimizers. All conditions are gathered for \thref{weighted optim}-(2,3) to be applied, hence the result.
\end{proof}





\section{Explicit Geodesics}
\label{sec-explicit-geod}

In this section, we give an explicit description of the behavior of geodesics in three cases: for an ``inflating'' measure, for the transport of one Dirac to another, and finally for the transport of multiple couples of Diracs. In order to establish explicit geodesics, we will first exhibit an ansatz and then prove the existence of an optimality certificate as defined in \thref{certificate}.

\subsection{No Transport Case: Inflating and Deflating Measures}

The case when there is no transport is dealt with in the following propositions. 
\begin{proposition}[A uniqueness result]
\thlabel{no transport case}
Let $(\rho_0,\rho_1) \in \mathcal{M}_+(\Om)^2$. If $(\rho,0,g\rho)$ is a minimizer of \eqref{dual}, where $g$ is the homogeneous rate of growth defined in \eqref{rate of growth},
then $\rho$ is the \emph{unique} geodesic.
\end{proposition}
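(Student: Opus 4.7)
The natural approach is to invoke the sufficient optimality and uniqueness condition of Theorem~\thref{certificate} by exhibiting an explicit optimality certificate $\varphi\in C^1([0,1]\times\bar\Om)$ for the candidate $\mu=(\rho,0,g\rho)$. Reading off the requirements from Lemma~\thref{lemma:subdiff}, $\varphi$ must satisfy $(\D_t\varphi,\nabla\varphi,\varphi)\in B_\delta$ pointwise, $\D_t\varphi+\tfrac12(|\nabla\varphi|^2+\varphi^2/\delta^2)=0$ $\rho$-a.e., $\rho\,\nabla\varphi=\M=0$, and $\rho\,\varphi=\delta^2\Z=\delta^2 g\rho$. Since $\M=0$ and since $g$ depends only on $t$, these constraints point to a $\varphi$ that is constant in space, namely
\[
\varphi(t,x)\,:=\,\delta^2 g(t).
\]

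Next I would verify that this $\varphi$ is a certificate. When $\rho_0(\Om)=\rho_1(\Om)$, $g\equiv0$, so $\varphi\equiv 0$ trivially works. Otherwise, $g(t)=2/(t-t_0)$ with $t_0$ as in~\eqref{rate of growth}, and a direct computation gives the key identity $\D_t g+g^2/2=0$ on $[0,1]$, which is exactly the saturation of the Hamilton--Jacobi-type inequality defining $B_\delta$ (with $\nabla\varphi=0$). The equality $\rho\,\varphi=\delta^2\Z$ then holds by construction. As long as both masses are strictly positive, the formula for $t_0$ gives $t_0\notin[0,1]$, so $\varphi$ is smooth on $[0,1]\times\bar\Om$, in particular in $C^1$.

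Having established these four conditions, $\varphi\in\D^c\ifonc(\mu)$ by Lemma~\thref{lemma:subdiff}, and Theorem~\thref{certificate} immediately yields that $\rho$ is the \emph{unique} geodesic for $\WF_\delta(\rho_0,\rho_1)$.

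The only delicate case is the degenerate one where $\rho_0(\Om)=0$ or $\rho_1(\Om)=0$: then $t_0\in\{0,1\}$ and the candidate $\varphi=2\delta^2/(t-t_0)$ develops a singularity at an endpoint, so it is not $C^1$ up to the boundary. This would be the main obstacle. It can be circumvented by approximation, in the same spirit as steps (iii)--(iv) of the proof of~\thref{uniqueness FR}: replace $\rho_0$ by $\rho_0^\eta=\rho_0+\eta\rho_1$ (or symmetrically for $\rho_1$) to push $t_0$ strictly outside $[0,1]$, apply the certificate argument above to obtain uniqueness for the perturbed endpoints, and use the bound of \thref{bounddistance} together with the triangle inequality to pass to the limit $\eta\to 0$.
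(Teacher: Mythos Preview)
Your approach via an explicit certificate is different from the paper's. The paper exploits the additive splitting $\ifonc = D_{BB} + \delta^2 D_{FR}$: since the candidate has $\omega=0$, its cost is $\delta^2 D_{FR}(\rho,g\rho)$, and since triplets of the form $(\cdot,\cdot,g\rho)$ minimize $D_{FR}$ over $\ccons$ (this is the characterization of $\mathcal{A}_{FR}$ from \thref{origin limit metrics}), any competitor $\tilde\mu$ satisfies $\ifonc(\tilde\mu)\ge D_{BB}(\tilde\rho,\tilde\omega)+\WF_\delta^2$, forcing $\tilde\omega=0$. Uniqueness is then inherited from the uniqueness of Fisher--Rao geodesics (\thref{uniqueness FR}). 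This argument never requires a $C^1$ dual variable and is therefore insensitive to whether $t_0$ hits $0$ or $1$.

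Your certificate $\varphi=\delta^2 g$ is correct and clean when both masses are positive; it is in fact the same function that appears inside the proof of \thref{origin limit metrics}. The genuine gap is the degenerate case. Uniqueness is not a property that passes through approximation: knowing that each perturbed problem $(\rho_0^\eta,\rho_1)$ has a unique geodesic tells you nothing, a priori, about the limit problem, because an arbitrary minimizer for $(\rho_0,\rho_1)$ has no reason to arise as a limit of the perturbed minimizers. Your appeal to ``the same spirit as steps (iii)--(iv) of \thref{uniqueness FR}'' is misleading: step (iii) there uses approximation only to compute a \emph{value} (the distance), while uniqueness comes separately from the isometric injection into $L^2(\d\nu)$ in step (iv), a structural argument for which your sketch provides no analogue. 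The paper's route avoids this difficulty altogether by reducing to \thref{uniqueness FR}, whose proof already handles the degenerate endpoints.
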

\begin{proof}
Assuming $(\rho,0,g\rho)$ minimizes \eqref{dual}, we have $\WF_{\delta}^2(\rho_0,\rho_1)= \delta^2 D_{FR}(\rho,\Z)$. Moreover, $(\rho,0,g\rho) \in \argmin_{\mu \in \ccons} D_{FR}(\rho,\Z)$.
 Thus for any $\tilde{\mu}=(\tilde{\rho}, \tilde{\M}, \tilde{\Z}) \in \ccons$, $\ifonc(\tilde{\mu}) = D_{BB}(\tilde{\rho},\tilde{\M}) + \delta^2 D_{FR}(\tilde{\rho},\tilde{\Z}) \geq D_{BB}(\rho,\M) + \WF_{\delta}^2(\rho_0,\rho_1)$. This proves that for all minimizers, $\M=0$ and thus there is a unique geodesic which is the Fisher-Rao geodesic (see \thref{uniqueness FR}).
\end{proof}

\begin{proposition}[No transport case]
\thlabel{rescaled measures}
If $\rho_1=\alpha \rho_0$ with $\alpha \geq 0$, then 
\[ 
\rho=\left( t\sqrt{\alpha} + (1-t)\right)^2 \rho_0 \otimes \d t
\]
is the \emph{unique} geodesic for $\WF_{\delta}$, for all $\delta>0$, and
\[
\WF_{\delta}(\rho_0,\alpha \rho_0)= \delta \vert \sqrt{\alpha} - 1 \vert \sqrt{2\rho_0(\Om)}.
\]
\end{proposition}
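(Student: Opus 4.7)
My plan is to build an explicit optimality certificate in the sense of Theorem~\ref{certificate} and conclude via the uniqueness result of Proposition~\ref{no transport case}, treating $\alpha>0$ by the certificate and $\alpha=0$ by a matching-bounds argument. Write $h(t) \defeq t\sqrt{\alpha}+(1-t)$, so that the candidate is $\mu = (\rho,0,\Z)$ with $\rho = h(t)^2\rho_0 \otimes \d t$ and $\Z = \D_t\rho = 2(\sqrt{\alpha}-1)h(t)\,\rho_0 \otimes \d t$. Membership in $\ccons$ is immediate: the spatial flux vanishes, so the weak continuity equation \eqref{continuity weak} reduces to $\int \D_t\varphi\,\d\rho + \int\varphi\,\d\Z = \int\varphi(1,\cdot)\d(\alpha\rho_0) - \int\varphi(0,\cdot)\d\rho_0$, which is a one-dimensional integration by parts in $t$ using $h(0)^2=1$ and $h(1)^2=\alpha$.

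For the case $\alpha>0$, define the spatially constant test function
\[
\varphi(t,x) \defeq \frac{2\delta^2(\sqrt{\alpha}-1)}{h(t)}.
\]
Since $h(t) \geq \min(1,\sqrt{\alpha}) > 0$ on $[0,1]$, one has $\varphi \in C^1([0,1]\times\bar{\Om})$. I check the three conditions of Lemma~\ref{lemma:subdiff}: $\nabla\varphi=0$ matches $\M=0$; a direct computation gives $\varphi\rho = 2\delta^2(\sqrt{\alpha}-1)h(t)\rho_0\otimes\d t = \delta^2\Z$; and since $\D_t\varphi = -2\delta^2(\sqrt{\alpha}-1)^2/h(t)^2$ while $\varphi^2/(2\delta^2) = 2\delta^2(\sqrt{\alpha}-1)^2/h(t)^2$, the Hamilton--Jacobi identity $\D_t\varphi + \varphi^2/(2\delta^2) = 0$ holds pointwise, which is exactly the saturation of $B_\delta$ with $\nabla\varphi=0$. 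Theorem~\ref{certificate} then yields that $\rho$ is the unique geodesic. The value follows by plugging into $\fonc$: since $\Z_t^2/\rho_t = 4(\sqrt{\alpha}-1)^2\rho_0$, one gets $\WF_\delta^2 = \ifonc(\mu) = \int_0^1\int_\Om \delta^2\Z_t^2/(2\rho_t) = 2\delta^2(\sqrt{\alpha}-1)^2\rho_0(\Om)$, which is the announced formula.

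The main obstacle is the boundary case $\alpha=0$, where the formal certificate $\varphi=-2\delta^2/(1-t)$ is not $C^1$ up to $t=1$ and Theorem~\ref{certificate} cannot be applied directly. I handle it by matching bounds. The upper bound $\WF_\delta^2(\rho_0,0)\leq 2\delta^2\rho_0(\Om)$ comes from Proposition~\ref{bounddistance}. For the lower bound, take any $\mu=(\rho,\M,\Z)\in\mathcal{CE}_0^1(\rho_0,0)$ of finite energy; the remarks following Definition~\ref{continuity equation} give $(\sqrt{\rho_t(\Om)})' = \Z_t(\Om)/(2\sqrt{\rho_t(\Om)})$ for a.e.\ $t$, and the spatial form of Lemma~\ref{estimates} with $\eta\equiv 1$ gives $|\Z_t(\Om)|\leq \sqrt{2D_{FR}(\rho_t,\Z_t)\rho_t(\Om)}$. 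Cauchy--Schwarz in time then produces $|\sqrt{\rho_0(\Om)}-\sqrt{\rho_1(\Om)}|^2 \leq D_{FR}(\rho,\Z)/2$, so $\ifonc(\mu)\geq \delta^2 D_{FR}(\rho,\Z)\geq 2\delta^2\rho_0(\Om)$. Both bounds coincide, and equality forces $D_{BB}(\rho,\M)=0$, hence $\M=0$. The problem thus reduces to a pure Fisher-Rao problem between $\rho_0$ and $0$, whose unique geodesic by Theorem~\ref{uniqueness FR} is $\rho=(1-t)^2\rho_0\otimes\d t$, matching the formula with $\alpha=0$.
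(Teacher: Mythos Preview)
Your proof is correct. For $\alpha>0$ it coincides with the paper's argument: same certificate $\varphi(t,x)=2\delta^2(\sqrt{\alpha}-1)/h(t)$, same verification of the subdifferential conditions, and uniqueness via Theorem~\ref{certificate}.

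For $\alpha=0$ you take a different route. The paper bootstraps from the already-proved $\alpha>0$ case using the triangle inequality $\WF_\delta(\rho_0,\alpha\rho_0)\leq\WF_\delta(\rho_0,0)+\WF_\delta(0,\alpha\rho_0)$ and lets $\alpha\to 0$ to squeeze the value of $\WF_\delta(\rho_0,0)$; uniqueness is then obtained by invoking Proposition~\ref{no transport case}. You instead produce a direct variational lower bound: the mass-evolution estimate $|(\sqrt{\rho_t(\Om)})'|\leq\sqrt{D_{FR}(\rho_t,\Z_t)/2}$ followed by Cauchy--Schwarz in time yields $\rho_0(\Om)\leq D_{FR}(\rho,\Z)/2$, hence $\ifonc(\mu)\geq 2\delta^2\rho_0(\Om)$, matching the upper bound. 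This is self-contained (it does not rely on the $\alpha>0$ case) and in fact yields the general inequality $\big(\sqrt{\rho_0(\Om)}-\sqrt{\rho_1(\Om)}\big)^2\leq D_{FR}(\rho,\Z)/2$, which is of independent interest. Your uniqueness argument then simply inlines the content of Proposition~\ref{no transport case}: equality in the lower bound forces $D_{BB}=0$, whence $\M=0$ and Theorem~\ref{uniqueness FR} applies.

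One small technical point worth tightening: the chain-rule identity $(\sqrt{\rho_t(\Om)})'=\Z_t(\Om)/(2\sqrt{\rho_t(\Om)})$ is only valid where $\rho_t(\Om)>0$, and since $\rho_1(\Om)=0$ the mass may vanish. The clean fix is to let $\tau=\inf\{t:\rho_t(\Om)=0\}$, apply your estimate on $[0,\tau)$, and pass to the limit using continuity of $t\mapsto\rho_t(\Om)$; the conclusion is unchanged.
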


\begin{proof}
 If $\alpha>0$, consider the function defined on $[0,1]\times \bar{\Omega}$ by
\[
\varphi(t,x)=\frac{2\delta^2(\sqrt{\alpha}-1)}{(t\sqrt{\alpha}+(1-t))} \, ,
\]
and write the set $\D^c \ifonc (\mu)$ :
\begin{multline}
\D^c \ifonc (\mu) = \Big\{ (\alpha,\beta,\gamma) \in C([0,1]\times \Om; B_{\delta}) :
 \alpha+\frac12 \frac{\gamma^2}{\delta^2} = 0 - \text{$\rho$ a.e.\, $\beta = 0 $ $-\rho$ a.e.\,  }\\
\text{and}\, \gamma=\frac{2\delta^2(\sqrt{\alpha}-1)}{(t\sqrt{\alpha}+(1-t))} - \text{$\rho$ a.e. }
 \Big\}.
\end{multline}
We can check that $(\D_t \varphi, \nabla \varphi, \varphi) \in \D^c \ifonc (\mu)$ because, in particular, $\nabla \varphi = 0$ and $\D_t \varphi+\varphi^2/(2\delta^2)=0$. Then, as $(\rho,\M,\Z)\in \ccons$ the ansatz is a geodesic. The distance is found by computing $\ifonc(\mu)$. \\

Now if $\alpha=0$ the certificate $\varphi$ we built above is not defined at $t=1$ and this approach is fruitless. However, by the triangular inequality, for any $\alpha>0$, 
\[
\WF_{\delta} (\rho_0,\alpha \rho_0) \leq \WF_{\delta} (\rho_0,0) + \WF_{\delta} (0,\alpha \rho_0).
\]
By \thref{bounddistance} we have, $\WF_{\delta}(0,\alpha \rho_0)\leq \delta \sqrt{2\alpha \rho_0(\Om)}$. We rearrange the inequalities to obtain
\[
\delta \vert \sqrt{\alpha} - 1 \vert \sqrt{2\rho_0(\Om)} - \delta \sqrt{2\alpha \rho_0(\Om)} \leq \WF_{\delta} (\rho_0,0) \leq \delta \sqrt{2\rho_0(\Om)}.
\]
As this inequality holds for all $\alpha > 0$, we obtain $\WF_{\delta} (\rho_0,0)=\delta\sqrt{2\rho_0(\Om)}$ which is the value of the functional evaluated at the ansatz $\mu$ for $\alpha=0$, so the proof that $\rho$ is a geodesic is complete. For uniqueness, it only remains to remark that $\D_t \rho = g \rho$ (with $g$ defined in \eqref{rate of growth}) and then to apply \thref{no transport case}.
\end{proof}

\subsection{Transport of One Dirac to Another}
\label{sec-travelling-dirac}

We first solve the case of the geodesic between two Diracs and then generalize this result to configurations with more Diracs. In the case of two Diracs, we show that if the Diracs are closer than $\pi \delta$, then the \emph{unique} geodesic is a travelling Dirac. Beyond that distance, the Fisher-Rao geodesic is the \emph{unique} geodesic.
\begin{theorem}
\thlabel{2diracs}
Consider two Diracs of mass $h_0$ and $h_1$ and location $x_0$ and $x_1$ respectively, i.e.\ $\rho_0=h_0\delta_{x_0}$ and $\rho_1=h_1\delta_{x_1}$. We distinguish 3 types of behaviors for geodesics of $\WF_{\delta}$:
\begin{enumerate}
\item \emph{Travelling Dirac.} If $\vert x_1 - x_0 \vert <\pi\delta$ then the travelling Dirac $\rho= h(t) \delta_{x(t)} \otimes \d t$ implicitly defined by
\[
\begin{cases}
h(t) = At^2-2Bt+h_0\\
h(t)x'(t)=\M_0
\end{cases}
\]
is the \emph{unique} geodesic. Here $\M_0$, $A$ and $B$ are constants explicitly depending on $h_0$, $h_1$ and $x_1-x_0$ (the relations are given in the proof). 
\item \emph{Cut Locus.} If $\vert x_1 - x_0 \vert =\pi \delta$ then there are infinitely many geodesics. Some of them can be described in the following way: take $N\in \N$ and choose two $N$-tuples $(h_0^i)_{i=1,\dots, N}$ and $(h_1^i)_{i=1,\dots, N}$ of non-negative real numbers satisfying $h_0=\sum_1^N h_0^i$ and $h_1=\sum_{i=1}^N h_1^i$. A geodesic is given by  $\rho= \sum_{i=1}^N \rho_i$ where $\rho_i$ is either the travelling Dirac of point 1.\ or a Fisher-Rao geodesic between $\rho_0^i \delta_{x_0}$ and $\rho_1^i \delta_{x_1}$. Notice that a single travelling Dirac or the Fisher-Rao geodesics are particular cases.
\item \emph{No transport.} If $\vert x_1 - x_0 \vert> \pi \delta$ then the Fisher-Rao geodesic 
\[
\rho = \left[ t^2h_1\delta_{x_1}+(1-t)^2h_0\delta_{x_0}\right] \otimes \d t
\]
is the \emph{unique} geodesic.
\end{enumerate}
\end{theorem}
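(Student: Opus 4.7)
The plan is to apply the sufficient optimality and uniqueness condition of \thref{certificate} in each case: propose an explicit ansatz $\mu=(\rho,\omega,\zeta)\in\ccons$, and exhibit an optimality certificate $\varphi\in C^1([0,1]\times\bar{\Om})$ with $(\D_t\varphi,\nabla\varphi,\varphi)\in\D^{c}\ifonc(\mu)$; \thref{certificate} then yields optimality and uniqueness in one stroke.

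For Case~1, I first verify weakly that the travelling-Dirac ansatz $\rho=h(t)\delta_{x(t)}\otimes \d t$, $\omega=h(t)x'(t)\delta_{x(t)}\otimes \d t$, $\zeta=h'(t)\delta_{x(t)}\otimes \d t$ satisfies the continuity equation with the prescribed endpoint Diracs. Substituting into $\ifonc$ reduces the problem to minimizing the finite-dimensional Lagrangian $L=\tfrac{1}{2}[h(t)|x'(t)|^2+\delta^2 h'(t)^2/h(t)]$ over curves with fixed endpoints. Translational invariance in $x$ yields the conservation law $h(t)x'(t)=\omega_0$, and time-autonomy yields the Hamiltonian $H=L$; eliminating $x'$ and differentiating in $t$ produces $h''=\mathrm{const}$, hence the quadratic law $h(t)=At^2-2Bt+h_0$. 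The constants are then pinned down by the boundary data, and under the cone substitution $r=2\sqrt{h}$, $\theta=x/(2\delta)$ the problem becomes the Euclidean kinetic energy $\tfrac{1}{2}\int (r'^2+r^2|\theta'|^2)\,\d t$; the threshold $|x_1-x_0|<\pi\delta$ is exactly the angular condition ensuring that the endpoint cost in the cone stays strictly below the Fisher--Rao value $2\delta^2(h_0+h_1)$.

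The heart of Case~1 is the construction of a $C^1$ certificate. The conditions of \thref{lemma:subdiff} on the support of $\rho$ force $\nabla\varphi(t,x(t))=x'(t)$ and $\varphi(t,x(t))=\delta^2 h'(t)/h(t)$ together with the HJB equality $\D_t\varphi+\tfrac{1}{2}|\nabla\varphi|^2+\tfrac{1}{2\delta^2}\varphi^2=0$ along the curve; off-support only the inequality $\leq 0$ is required. I would use the quadratic ansatz
\[
\varphi(t,x)=\delta^2\frac{h'(t)}{h(t)}+x'(t)\cdot(x-x(t))-\tfrac{1}{2} q(t)|x-x(t)|^2,
\]
whose first-order behaviour along the curve is automatic by construction, and tune the coefficient $q(t)$ (a symmetric matrix in dimension $d>1$) so that a Riccati equation enforces the global HJB inequality. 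The main obstacle will be to prove that this Riccati admits a smooth solution on $[0,1]$ precisely when $|x_1-x_0|<\pi\delta$: integration yields a blow-up time matching the cut-locus threshold, which is the same trigonometric obstruction that makes the cone cost $2\delta^2[h_0+h_1-2\sqrt{h_0 h_1}\cos(|x_1-x_0|/(2\delta))]$ equal the Fisher--Rao value at $|x_1-x_0|=\pi\delta$.

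Case~3 is handled by exhibiting the Fisher--Rao interpolation $\rho=(t^2 h_1\delta_{x_1}+(1-t)^2 h_0\delta_{x_0})\otimes \d t$ with $\omega=0$: this triplet is admissible with cost $2\delta^2(h_0+h_1)$, saturating the upper bound of \thref{bounddistance} since the supports are disjoint, and is therefore a minimizer. Uniqueness then follows from \thref{no transport case} because the source is of the form $g\rho$ with $g$ the piecewise growth rate associated with the decoupled mass evolution on each atom, as in \thref{rescaled measures}; the geometric slack $|x_1-x_0|>\pi\delta$ is exactly what allows the two local pure-growth certificates $\varphi=-2\delta^2/(1-t)$ near $x_0$ and $\varphi=2\delta^2/t$ near $x_1$ to be glued into a global $C^1$ function satisfying the HJB inequality on the intermediate region. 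Case~2 then follows by additivity of $\ifonc$ over measures supported on disjoint sets: for any decomposition $h_i=\sum_j h_i^j$, each component $\rho^j$ is optimal (a degenerate travelling Dirac from Case~1 at the boundary of validity, or a Fisher--Rao piece from Case~3), both costing $2\delta^2(h_0^j+h_1^j)$, and the sum of energies matches the common threshold value $2\delta^2(h_0+h_1)=\WF_{\delta}^2(\rho_0,\rho_1)$; uniqueness fails precisely because the Riccati certificate degenerates at this threshold, allowing the described one-parameter family of minimizers.
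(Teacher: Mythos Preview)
Your Case~1 certificate is the main gap. With the quadratic ansatz
\[
\varphi(t,x)=\delta^2\frac{h'(t)}{h(t)}+x'(t)\cdot(x-x(t))-\tfrac{1}{2} q(t)|x-x(t)|^2,
\]
the Hamilton--Jacobi quantity $\D_t\varphi+\tfrac12|\nabla\varphi|^2+\tfrac{1}{2\delta^2}\varphi^2$ contains the term $\tfrac{1}{2\delta^2}\varphi^2$, which is a \emph{quartic} polynomial in $x-x(t)$ with strictly positive leading coefficient $\tfrac{q(t)^2}{8\delta^2}$. No choice of $q$ solving a Riccati equation can make this expression $\leq 0$ on all of $\bar\Om$: the inequality necessarily fails away from the curve (and already a linear certificate fails for the same reason, via the quadratic term $(x'\cdot u)^2/(2\delta^2)$). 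The paper resolves this by taking a bounded certificate $\varphi(t,x)=a(t)\cos(|x-\theta|)+b(t)$; the trigonometric form collapses the HJ expression to $(a'+ab)\cos(|x-\theta|)+b'+\tfrac12(a^2+b^2)$, which is made globally $\leq 0$ by solving the coupled ODE $a'+ab=0$, $b'+\tfrac12(a^2+b^2)=0$. The cut-locus threshold $\pi\delta$ then appears not through a Riccati blow-up but through the requirement that the integration constants $t_1,t_2$ (poles of $a,b$) lie outside $[0,1]$, which reduces to the arctangent identity and the sign of $1-\alpha\beta$.

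Your Case~3 argument also does not go through as written. The local certificates $-2\delta^2/(1-t)$ and $2\delta^2/t$ blow up at $t=1$ and $t=0$ respectively, so no $C^1$ gluing on $[0,1]\times\bar\Om$ is possible; the paper explicitly flags this obstruction already in the proof of \thref{rescaled measures}. Moreover \thref{no transport case} requires the source to be $g\rho$ with $g$ the \emph{spatially constant} rate from~\eqref{rate of growth}, whereas for two disjoint Diracs one has $\zeta/\rho=-2/(1-t)$ at $x_0$ and $2/t$ at $x_1$, so that proposition does not apply. The paper instead computes $\WF_\delta^2=2\delta^2(h_0+h_1)$ at the cut locus via a limiting triangle-inequality argument, and for $|x_1-x_0|>\pi\delta$ invokes strict monotonicity of the distance under spatial contraction (\thref{monotonicity of the distance}) to force $\omega=0$ for any minimizer, after which Fisher--Rao uniqueness (\thref{uniqueness FR}) concludes. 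Your Case~2 sketch is essentially right in spirit, but ``additivity over disjoint supports'' is not the mechanism (the travelling Diracs sweep through $[x_0,x_1]$); what is used is subadditivity of $\ifonc$ together with the fact that at the cut locus every piece has cost $2\delta^2(h_0^i+h_1^i)$, linear in the masses.
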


\begin{corollary}
 If $|x_1-x_0|<\pi \delta$ and if $\Om \subset \R$, then the distance is (by formula \eqref{link distance A})
\begin{align*}
\WF_{\delta}(h_0\delta_{x(0)},h_1 \delta_{x(1)}) 
&= \sqrt{2}\delta \left[ h_0 + h_1 -2\sqrt{h_0 h_1} \cos \left( \frac{x_1-x_0}{2\delta} \right) \right]^{1/2}\\
&= \sqrt{2}\delta | \sqrt{h_1} e^{i x_1/(2\delta)} - \sqrt{h_0} e^{i x_0/(2\delta)} | \, .
\end{align*}
This induces a local isometric injection from the space of Dirac measures on $]0,\delta\pi[$ to $\mathbb{C}$ equipped with the flat Euclidean metric.
\end{corollary}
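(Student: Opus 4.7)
My plan is to use Theorem \ref{2diracs} Part 1 to reduce the problem to evaluating $\ifonc$ on the explicit traveling-Dirac ansatz, and then to recognize the resulting one-dimensional action as the squared Euclidean length of a curve in $\mathbb{C}$ via a complex change of variable.

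First, since Theorem \ref{2diracs} guarantees that under the assumption $|x_1-x_0|<\pi\delta$ the unique geodesic is of the form $\rho = h(t)\delta_{x(t)} \otimes \d t$, it suffices to compute the functional on this trajectory. Choosing $\lambda = \delta_{x(t)} \otimes \d t$ as the reference measure, the Radon-Nikodym densities of $(\rho, \M, \Z)$ become $(h, hx', h')$, so that the action reduces to
\[
\ifonc(\mu) = \int_0^1 \left( \tfrac{1}{2} h(t) |x'(t)|^2 + \tfrac{\delta^2}{2} \frac{h'(t)^2}{h(t)} \right) \d t.
\]

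The key step is the complex change of variable $z(t) \defeq \sqrt{h(t)}\, e^{i x(t)/(2\delta)} \in \mathbb{C}$, well-defined and smooth so long as $h(t)>0$. Differentiating directly,
\[
|z'(t)|^2 = \frac{h'(t)^2}{4 h(t)} + \frac{h(t)\,|x'(t)|^2}{4\delta^2},
\]
from which $\ifonc(\mu) = 2\delta^2 \int_0^1 |z'(t)|^2\, \d t$. Now I invoke the constant-speed characterization \eqref{alternative2} of Theorem \ref{WF defines a metric}: on the single-Dirac ansatz this yields
\[
\WF_\delta(h_0\delta_{x_0},h_1\delta_{x_1}) = \sqrt{2}\,\delta \inf \int_0^1 |z'(t)|\, \d t \geq \sqrt{2}\,\delta\,|z(1)-z(0)|,
\]
with equality realized by a straight segment $z(t) = (1-t)z(0)+t z(1)$ in $\mathbb{C}$. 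A short check shows that this linear path lifts precisely to the parabolic profile $h(t) = At^2 - 2Bt + h_0$ and the conservation law $h(t)x'(t) = \omega_0$ appearing in Theorem \ref{2diracs}: indeed $|z(t)|^2$ is quadratic in $t$, and the ``angular momentum'' $\mathrm{Im}(\bar z z')$ is constant for a straight line, which translates into $h\,x' = 2\delta\,\mathrm{Im}(\bar z z') = \text{const}$. Combining with the uniqueness granted by Theorem \ref{2diracs} thus yields the desired identity $\WF_\delta^2 = 2\delta^2 |\sqrt{h_1} e^{ix_1/(2\delta)} - \sqrt{h_0}e^{ix_0/(2\delta)}|^2$, and expanding with $2\,\mathrm{Re}(e^{i\theta}) = 2\cos\theta$ produces the cosine formula.

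The assertion on local isometric injection is then immediate: the map $h\delta_x \mapsto \sqrt{2}\,\delta\,\sqrt{h}\,e^{ix/(2\delta)}$, defined on Dirac measures supported on $]0, \pi\delta[$, is injective (the argument $x/(2\delta)$ ranges in an open interval of length $<\pi/2$, well below one full period) and pulls the Euclidean metric of $\mathbb{C}$ back to $\WF_\delta$ by the computation above. The one delicate point I anticipate is justifying that $h(t)>0$ on the whole interval $[0,1]$ so that the change of variable is genuinely smooth; this amounts to checking that the linear segment $[z(0),z(1)]$ avoids $0\in\mathbb{C}$, which is ensured precisely by the angular separation $|x_1-x_0|/(2\delta)<\pi/2$ (strictly less than $\pi$) and is in fact already guaranteed by the traveling-Dirac regime in Theorem \ref{2diracs}.
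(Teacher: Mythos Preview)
Your proof is correct and takes a genuinely different route from the paper's. The paper obtains the corollary by direct substitution: in the proof of \thref{2diracs} it is shown that $\WF_\delta^2 = 2\delta^2 A$ (this is precisely formula \eqref{link distance A}) with $A = h_0 + h_1 - 2\sqrt{h_0h_1/(1+\tau^2)}$ and $\tau = \tan\!\big((x_1-x_0)/(2\delta)\big)$; since $|x_1-x_0|/(2\delta)<\pi/2$ one has $1/\sqrt{1+\tau^2} = \cos\!\big((x_1-x_0)/(2\delta)\big)$, and the complex expression is then nothing but the law of cosines read backwards.

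Your argument is more geometric: the substitution $z = \sqrt{h}\,e^{ix/(2\delta)}$ flattens the one-Dirac action into the Euclidean kinetic energy on $\mathbb{C}$, so the geodesic must correspond to a straight segment and the distance is $\sqrt{2}\,\delta\,|z(1)-z(0)|$ without further computation. This buys you a direct explanation of \emph{why} the isometric injection into $\mathbb{C}$ exists---the paper only observes it a posteriori from the final formula---and it bypasses the explicit determination of the constants $A$, $B$, $\M_0$, $\tau$. The price is that you still need \thref{2diracs} to justify the ``$=$'' in your displayed chain (restricting the infimum in \eqref{alternative2} to travelling Diracs would otherwise yield only ``$\leq$''), and you must verify that the straight segment lifts back to an admissible trajectory with $h>0$ and $x(t)\in\Om$, which you address. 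The paper's route, by contrast, relies on the full Euler--Lagrange analysis already carried out in \thref{2diracs} and then reduces the corollary to a one-line trigonometric simplification.
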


\begin{remark}
If $h_0=h_1=h$ and $|x_1-x_0|<\pi \delta$ then the minimum mass of the geodesic is attained at $t=1/2$ and its value is
\[
\frac h2 \left[ 1+\cos \left( \frac{|x_1-x_0|}{2\delta} \right) \right]
\] 
which is never smaller than $h/2$.
\end{remark}

\begin{proof}
The proof is divided into 4 parts, in the first three parts, $\Om$ is a segment in $\R$. First we derive an ansatz for the geodesics when $\vert x_1-x_0\vert<\pi\delta$, and write the sufficient optimality conditions. Then comes a part with technical computations where we prove the existence of an optimality certificate. In a third part we study what happens at the cut locus and beyond. Finally, we extend the proof for $\Om \in \R^d$, for any dimension $d$. 
\item 
\begin{paragraph}{i. Ansatz and optimality conditions}
In this part, we consider the case where $\vert x_1 - x_0 \vert < \pi\delta$ and $\Om$ is an open segment of $\R$. Let us look for geodesics of the form of travelling Diracs, i.e 
\begin{equation}
\label{travelling dirac}
\rho = h(t)\delta_{x(t)} \otimes \d t
\end{equation}
where $h$ and $x$ are functions from $\R$ to $\R$, assumed sufficiently smooth so that all oncoming expressions make sense. Moreover, we assume $h_0,h_1 \neq 0$ and $x_1\neq x_0$ (those special cases are all dealt with in \thref{rescaled measures}). Satisfying the continuity constraint imposes $\M = h(t)x'(t)\delta_{x(t)} \otimes \d t$ and $\Z = h'(t)\delta_{x(t)} \otimes \d t$, so the functional reads
\[
\ifonc(\rho,\M,\Z)=\int_0^1 \frac12 \left( x'(t)^2 h(t) +\delta^2 \frac{h'(t)^2}{h(t)} \right) \d t.
\]
The Euler-Lagrange conditions imply that minimizers among travelling Diracs should satisfy, for some $\M_0\in \R$,
\begin{eqnarray*}
\begin{cases}
h(t)x'(t)=\M_0 \\
2h''(t)h(t)-h'(t)^2=\M_0^2/\delta^2 \\
h(0) , h(1) , x(0) , x(1) \text{ fixed.}
\end{cases}
\end{eqnarray*}
Solutions of the second order differential equation are of the form $h(t)=At^2-2Bt+h_0 $ where $A$ and $B$ are given by the system 
 \[
 \begin{cases}
A h_0-B^2 = \frac{\M_0^2}{4\delta^2} \\
A - 2B = h_1 - h_0
\end{cases}
 \]
 This is a second order equation which admits two solutions
\[
A =  h_1 +h_0 - 2 \epsilon \sqrt{h_0h_1-\frac{1}{4} \frac{\M_0^2}{\delta^2}} 
\quad \text{and} \quad 
B =  h_0 - \epsilon \sqrt{h_0h_1-\frac{1}{4} \frac{\M_0^2}{\delta^2}}
\]
where $\epsilon \in \{ -1,+1\}$. In order to choose the value for $\epsilon$, we plug the expressions in the functional
\begin{equation}
\label{link distance A}
\ifonc(\rho,\M,\Z)=\int_0^1 \frac12 \frac{\M_0^2+\delta^2 h'(t)^2}{h(t)} \d t = 2 \delta^2 A
\end{equation}
and conclude that $\epsilon=+1$ as otherwise, the functional is greater than the Fisher-Rao upper bound given in \thref{bounddistance}. Also, \eqref{link distance A} shows that $A>0$ since $\WF_{\delta}(\rho_0,\rho_1)>0$. In order to determine $\M_0$, we integrate the speed to obtain
\[
\frac{x_1-x_0}{\M_0}
=\int_0^1 \frac{\d t}{h(t)}
=\frac{2\delta}{\M_0} \int_{-\alpha}^{\beta} \frac{\d u}{1+u^2}
=\frac{2\delta}{\M_0} \left[ \arctan(\beta)+ \arctan(\alpha) \right]
\]
with 
\begin{equation}
\label{alpha,beta}
\alpha \defeq \frac{2\delta}{\M_0}B 
\quad \text{and} \quad 
\beta \defeq \frac{2\delta}{\M_0}(A-B).
\end{equation}
The expression for the sum of $\arctan$ depends on the sign of $1-\alpha \beta$. Yet we find
\[
1-\alpha \beta = \frac{4\delta^2}{\M_0^2}A\sqrt{h_0h_1-\frac{\M_0^2}{4\delta^2}}
\]
which is positive since $A>0$. So we are in the case 
\[
\arctan \alpha + \arctan \beta = \arctan \left( \frac{\alpha+\beta}{1-\alpha \beta} \right) \in [-\pi/2,\pi/2]
\]
and solutions exist if and only if $|x_1-x_0|< \pi \delta$. We introduce $\tau \defeq \frac{\alpha + \beta}{1- \alpha \beta}$ and by direct computations
\[
\M_0 = 2 \delta \tau \sqrt{\frac{h_0 h_1}{1+\tau^2}}
\quad \text{ and } \quad 
\tau = \tan \left( \frac{x_1-x_0}{2\delta} \right).
\]
Notice that we can rewrite $A$ and $B$ in a simpler way with the new parameter $\tau$
\begin{eqnarray*}
A= h_1 +h_0 - 2\sqrt{\frac{h_0 h_1}{1+\tau^2}} \geq 0 & \text{and} & B= h_0 -  \sqrt{\frac{h_0 h_1}{1+\tau^2}}
\end{eqnarray*}
making it clear that $A$ and $B$ are well defined.

\end{paragraph}
\item
\begin{paragraph}{ii. Existence of a certificate}
So far, we have shown that travelling Diracs of the form \eqref{travelling dirac} cannot be geodesics if $|x_1-x_0|>\pi \delta$. Otherwise, there is a unique candidate solution and this part is devoted to the construction of an optimality certificate in order to show that it is optimal over all measure valued trajectories and not only among travelling Diracs. For simplicity, let us take $\delta=1$ from now on, without loss of generality thanks to \thref{rescaling}. By \thref{certificate}, $\varphi \in C^1([0,1]\times \Om)$ is an optimality (and uniqueness) certificate if for all $(t,x)\in [0,1]\times \bar{\Om}$
\begin{equation}
\D_t \varphi +\frac12 \left( (\D_x \varphi) ^2 + \varphi^2 \right) \leq 0
\label{ineqcondition}
\end{equation}
and on the support of $\rho$ :
\begin{numcases}{}
\D_x \varphi(x(t),t) = x'(t) \label{dualconda}\\
\varphi(x(t),t)= \frac{h'(t)}{h(t)} \label{dualcondb}\\
\D_t \varphi +\frac12 \left( (\D_x \varphi )^2 + \varphi^2 \right) = 0 \label{dualcondc} .
\end{numcases}
In view of the equations $\varphi$ should satisfy, we suggest the following optimality certificate for the candidate $(\rho,\M,\Z)$:
\[
\varphi(t,x)=a(t) \cos(x-\theta) + b(t)
\]
Inequality \eqref{ineqcondition} gives, for all $(t,x)\in [0,1]\times \bar{\Om}$:
\[
\left( a'(t) +a(t)b(t) \right) \cos(x-\theta) + \left(  b'(t) +\frac12 ( a^2(t) +b^2(t) ) \right) \leq 0
\]
As equality has to be reached at all points of the form $(x(t),t)$ from condition \eqref{dualcondc}, the only way to satisfy this inequality is to have
\[
\begin{cases}
a' + ab = 0\\
b' + \frac12(a^2+b^2)=0.
\end{cases}
\]
The solutions to this differential system are of the form
\[
\begin{cases}
a(t)=\frac{1}{t-t_1} -\frac{1}{t-t_2}\\
b(t)=\frac{1}{t-t_1} +\frac{1}{t-t_2}
\end{cases}
\]
Now assume $t_1, t_2 \notin [0,1]$, so that $a$ and $b$ are $C^{\infty}$ on $[0,1]$. This assumption will be checked below.

It remains to check conditions \eqref{dualconda} and \eqref{dualcondb}. Equation \eqref{dualconda} imposes $x'(t)=-a(t)\sin (x(t)-\theta)$. We first determine $x$ implicitly from this equation before checking in a later step that it indeed corresponds to $\M_0/h$. For $k\in \mathbb{Z}$, the function $x : t\in [0,1] \mapsto \theta +k\pi$ is a stationary solution. As $a$ is smooth, the Picard-Lindel\"of theorem guarantees the uniqueness of the solution of any initial value problem, so solutions do not intersect. Since $x$ is not constant, we have $\forall t \in [0,1], x(t)\notin \{ \theta + k\pi, k \in \mathbb{Z} \}$, and thus $\sin(x(t)-\theta)$ does not vanish in $[0,1]$. Let us divide by $\sin(x(t)-\theta)$ and integrate equation \eqref{dualconda}
\begin{eqnarray*}
\frac{x'(t)}{\sin(x(t)-\theta)} = \frac{1}{t-t_2} -\frac{1}{t-t_1}
&\Leftrightarrow&
 \frac12 \log \frac{1-\cos (x(t)-\theta)}{1+\cos (x(t)-\theta)} = \log \frac{1}{\kappa}  \left\vert \frac{t-t_2}{t-t_1} \right\vert
\end{eqnarray*}
and thus
\begin{equation}
\label{eq: implicit x prime}
\cos (x(t)-\theta) = \frac{\kappa^2(t-t_1)^2 -(t-t_2)^2}{\kappa^2(t-t_1)^2 +(t-t_2)^2}\\
\end{equation}
for some $\kappa > 0$. The value of the integration constant $\kappa$ will be studied later on, but we already see that $\theta$ is determined given $t_1$ and $t_2$:
\[
\theta= x(0) - \arccos \left( \frac{\kappa^2t_1^2-t_2^2}{\kappa^2t_1^2+t_2^2}\right)\; \text{mod } 2\pi
\]
The last constraint we have is equality \eqref{dualcondb}: $a(t)\cos (x(t)-\theta) +b(t)= \frac{h'(t)}{h(t)}$.
After several lines of re-arranging one finds this is equivalent to
$$
2\frac{\kappa^2(t-t_1)+(t-t_2)}{\kappa^2(t-t_1)^2+(t-t_2)^2}=2\frac{At-B}{At^2-2Bt+h_0} \, .
$$
By identification, there exists a factor $\lambda_2 >0$ such that:
\begin{equation*}
\begin{cases}
A= \lambda_2(\kappa^2+1)\\
B= \lambda_2(\kappa^2 t_1+t_2)\\
h_0=\lambda_2(\kappa^2 t_1^2+t_2^2).
\end{cases}
\end{equation*}
This can be symmetrized as
\begin{equation}
\begin{cases}
A= \lambda_1+\lambda_2\\
B= \lambda_1 t_1+\lambda_2 t_2\\
h_0=\lambda_1 t_1^2+\lambda_2 t_2^2.
\end{cases}
\label{diraccondition}
\end{equation}
with $\lambda_1>0 , \, \lambda_2 > 0$ (we imposed $\lambda_1=\lambda_2 \kappa^2$).

%

Also, note that gathering conditions \eqref{dualconda},  \eqref{dualcondb} and \eqref{dualcondc} leads to the relation $2h'h-(h')^2 = (x')^2h^2$ and thus, by the Euler-Lagrange conditions satisfied by $h$, $(x')^2h^2 = \omega_0^2$. This proves that the function $x$ implicitly defined by \eqref{eq: implicit x prime} is the same as that of the ansatz.
To sum up, if a solution to system \eqref{diraccondition} satisfies the constraints
\[
\begin{cases}
\lambda_1> 0\\
\lambda_2 > 0\\
(t_1,t_2) \notin [0,1]^2\\
\end{cases}
\]
then we have built an optimality certificate. System \eqref{diraccondition} leads to the following candidate solutions:
\[
\begin{cases}
t_1 = \frac{B}{A} +\varepsilon \frac{\M_0}{2A} \frac{1}{ \kappa} \\
t_2 = \frac{B}{A} -\varepsilon \frac{\M_0}{2A} \kappa\\
\lambda_2=A-\lambda_1
\end{cases}
\]
where $\kappa=\sqrt{\frac{\lambda_1}{\lambda_2}}$ and $\varepsilon \in \{-1,1\}$. The sign of $\varepsilon$ depends on direction of the movement. We choose say, $\varepsilon=1$, the other case is similar. Let us look for a solution satisfying, e.g. $t_1>1$ and $t_2<0$. This leads to the conditions
\begin{eqnarray*}
\kappa > \alpha 
& 
\text{and} & 
\frac{1}{\kappa} > \beta
\end{eqnarray*}
where $\alpha$ and $\beta$ have been defined earlier in the proof in \eqref{alpha,beta}. It is not excluded that $\beta$ or $\alpha$ be non-positive, in that case it is easy to find $\kappa$ satisfying both inequalities. Now, if $\alpha, \beta >0$, the necessary and sufficient condition for a solution $\kappa$ to exist, is $\alpha \beta <1$, which we have already checked. This concludes the proof for the case $\vert x_1-x_0 \vert < \delta \pi$.
\end{paragraph}
\item
\begin{paragraph}{iii. The cut locus and beyond}
Now we study what happens when $\vert x_1 -x_0 \vert= \pi \delta$, which is the cut locus distance. 
Introduce a Dirac $h_1 \delta_{x(s)}$ between $h_0\delta_{x_0}$ and $h_1\delta_{x_1}$ where $x(s)=x_0 + s (x_1-x_0)$. It is not hard to check (see for instance equation \eqref{link distance A} ) that 
\[
\WF^2_{\delta}(h_0\delta_{x_0},h_1\delta_{x(s)}) \underset{s \rightarrow 1}{\rightarrow} 2\delta^2 (h_0+h_1)
\]
 and that 
 \[
 \WF^2_{\delta}(h_1\delta_{x(s)},h_1\delta_{x_1}) \underset{s \rightarrow 1}{\rightarrow} 0 .
 \]
Combining two triangle inequalities---for the upper and lower bounds---we obtain
\[
\WF^2_{\delta}(h_0\delta_{x_0},h_1\delta_{x_1}) = 2\delta^2(h_0+h_1).
\]
This has important consequences. First, it proves that the travelling Dirac is still a geodesic at the cut locus, with the trajectory $\rho_t=h(t)\delta_{x(t)}$ where
\begin{eqnarray*}
h(t)=h_0(t-1)^2+h_1t^2& \text{and} &
x'(t) = \frac{4 h_0 h_1 \delta}{h(t)}
\end{eqnarray*}
since $\ifonc$ evaluated at this trajectory is worth $2\delta^2(h_0+h_1)$.
Second, as $\ifonc$ linearly depends on the masses we can build infinitely many geodesics, examples are described in the theorem. Finally, the cost at the cut locus equals the upper-bound of pure Fisher-Rao (recall \thref{bounddistance}).

Thus, by monotonicity of the distance w.r.t.\ rescaling (see \thref{monotonicity of the distance}), the $\WF_{\delta}$ distance between two Diracs for which $|x_1-x_0|\geq \pi \delta$ is $\delta\sqrt{2(h_0+h_1)}$ and a geodesic is
\[
\rho = \left[ h_1 t^2\delta_{x_1}+h_0(1-t)^2\delta_{x_0}\right] \otimes \d t \, .
\]
For uniqueness, we again apply \thref{monotonicity of the distance} for proving that, when $|x_1-x_0|> \pi \delta$, the component $\omega$ of minimizers is zero. Thus the only geodesic is the Fisher-Rao geodesic, by \thref{uniqueness FR}.
\end{paragraph}
\item

\begin{paragraph}{iv. Extension to arbitrary dimension}
The extension of the previous results for $\Om \in \R^d $, $d>1$ is rather straightforward and we will do it in such a way so as to prepare the ground for the next theorem. Recall that what we want to prove (and have proven for $d=1$ so far) is that if $\vert x_1 -x_0 \vert < \pi \delta$, then the travelling Dirac (as described in the theorem) is the unique geodesic. To show this, we choose $\delta=1$ without loss of generality and we introduce the function
\begin{equation} 
\varphi(t,x)=\left( \frac{1}{t-t_1}-\frac{1}{t-t_2} \right) \cos(\vert x-\theta \vert ) + \frac{1}{t-t_1}+\frac{1}{t-t_2} 
\label{certifddim}
\end{equation}
where the constants $t_1$, $t_2$ and the vector $\theta$ can be found by the same method than above by considering the problem on the line $\text{span}\{x_1 - x_0 \}$. This is possible because cosine is an even function and $\theta \in \text{span}\{x_1 - x_0 \}$. Now remark that the derivative of $\cos(\vert \cdot \vert)$ is continuous at $0$, thus $\varphi \in C^1([0,T]\times\bar{\Om})$ and by construction satisfies all the conditions to be an optimality and uniqueness certificate and the result is shown. The behaviors at the cut locus and beyond extend straightforwardly to this case.
\end{paragraph}
\end{proof}

\subsection{Matching Atomic Measures}

Geodesics between one couple of Diracs are so far quite well-understood. To go one step further in the comprehension of the behavior of our metric, we study the geodesics between two purely atomic measures $\rho_0$ and $\rho_1$. We show that under some geometrical conditions on the locations of the atoms, the behavior is similar to that of a pair of atoms. Those conditions require basically that the atoms be arranged by close pairs of Diracs, and the pairs should be far from each other. 

\begin{theorem}
\thlabel{pairsdiracs}
Let $\rho_0 = \sum_{i=1}^N h_0^i \delta_{x^i_0}$ and $\rho_1 = \sum_{i=1}^N h^i_1 \delta_{x^i_1}$ be the initial and final measures, where $h^i_0$ or $h^i_1$ may be zero. Also suppose that 
\begin{itemize}
\item $\forall i \in \{1,\dots N \}, \vert x^i_0-x^i_1 \vert < \pi \delta$,
\item $\forall (i,j) \in \{1,\dots N \}^2, i\neq j \Rightarrow \max_{\alpha,\beta \in \{0,1\}}\{\vert x^i_{\alpha}-x^j_{\beta} \vert\} > 6\pi \delta$. 
\end{itemize}
Then $\rho = \sum_{i=1}^N \rho^i$, where for all $i\in \{1, \dots, N \}$, $\rho^i = h^i(t) \delta_{x^i(t)} \otimes \d t$ is the geodesic between $h^i_0 \delta_{x^i_0}$ and $h^i_1 \delta_{x^i_1}$ from \thref{2diracs}, is a geodesic for $\WF_{\delta}$. This geodesic is \emph{unique} if all masses $h_0^i$, $h_1^i$ are positive.
\end{theorem}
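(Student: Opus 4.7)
The natural ansatz is $\mu \defeq \sum_{i=1}^N \mu^i$ with $\mu^i=(\rho^i,\M^i,\Z^i)$ the travelling-Dirac triple produced by \thref{2diracs} for the $i$-th pair; linearity of the continuity constraint immediately gives $\mu\in\ccons$. By \thref{certificate} it will suffice to exhibit a single $\varphi \in C^1([0,1]\times\bar{\Om})$ satisfying $(\D_t\varphi,\nabla\varphi,\varphi)\in \D^{c} \ifonc(\mu)$, i.e., the global Hamilton--Jacobi inequality $\D_t\varphi+\tfrac12(|\nabla\varphi|^2+\varphi^2/\delta^2)\le 0$, together with the prescribed equality and jet along every trajectory; this will yield both optimality of $\mu$ and, in the non-degenerate case $h_0^i,h_1^i>0$, uniqueness of $\rho$.

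Set $\delta=1$ via \thref{rescaling}. The $d$-dimensional single-pair certificate from~\eqref{certifddim},
\[
\varphi_i(t,x) \defeq a_i(t)\cos(|x-\theta_i|)+b_i(t),\qquad a_i(t)=\tfrac{1}{t-t_1^i}-\tfrac{1}{t-t_2^i},\quad b_i(t)=\tfrac{1}{t-t_1^i}+\tfrac{1}{t-t_2^i},
\]
is smooth (by evenness of $\cos$ as a function of $|x-\theta_i|^2$) and, via the ODEs $a_i'+a_ib_i=0$, $b_i'+\tfrac12(a_i^2+b_i^2)=0$, satisfies the HJ equality $\D_t\varphi_i+\tfrac12(|\nabla\varphi_i|^2+\varphi_i^2)=0$ \emph{identically} on $[0,1]\times\R^d$. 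Since $\sin(\pi)=0$, on the sphere $|x-\theta_i|=\pi$ both $\nabla\varphi_i$ vanishes and $\varphi_i$ coincides with the spatial constant $c_i(t)\defeq b_i(t)-a_i(t)=2/(t-t_2^i)$; as $c_i$ itself solves the reduced ODE $c_i'+c_i^2/2=0$, the $C^1$ extension $\tilde\varphi_i$ defined to equal $\varphi_i$ on $B(\theta_i,\pi)$ and $c_i(t)$ outside still verifies the HJ equality everywhere.

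The global certificate is then
\[
\varphi(t,x) \defeq \sum_{i=1}^N \chi_i(x)\,\tilde\varphi_i(t,x),
\]
where $\chi_i$ is a smooth radial cutoff equal to $1$ on $B(\theta_i,\pi)$ and supported in $B(\theta_i,\pi+L)$, with the explicit transition profile $\chi_i(r)=\cos^2\bigl(\pi(r-\pi)/(2L)\bigr)$ on the annulus. On each $B(\theta_i,\pi)$ the sum collapses to $\varphi_i$, so the equality and jet prescriptions on $\mathrm{supp}\,\rho^i$ are automatic; outside the union of supports $\varphi\equiv 0$ trivially verifies HJ. The main obstacle is the HJ \emph{inequality} in each transition annulus, where $\tilde\varphi_i=c_i(t)$ is space-independent and $\varphi=\chi_i c_i$: a direct expansion using $c_i'=-c_i^2/2$ reduces the HJ inequality to the pointwise bound
\[
|\nabla\chi_i|^2\;\le\; \chi_i(1-\chi_i),
\]
which the $\cos^2$ profile meets with equality precisely when $L\ge\pi$, both sides being identical multiples of $\sin^2(\pi(r-\pi)/L)$. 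It remains to arrange that the $B(\theta_i,\pi+L)$ be pairwise disjoint: the hypothesis $\max_{\alpha,\beta}|x_\alpha^i-x_\beta^j|>6\pi\delta$, combined via the triangle inequality with $|x_0^k-x_1^k|<\pi\delta$, forces all endpoint distances to exceed $4\pi$ (in the rescaled units), and using the freedom left in the single-pair construction to place $\theta_k$ near the midpoint of its pair brings $|\theta_k-x_0^k|$ down below $\pi/2$, yielding $|\theta_i-\theta_j|>2(\pi+L)$ as needed. With the certificate in hand, \thref{certificate} simultaneously delivers optimality of $\mu$ and uniqueness of $\rho$.
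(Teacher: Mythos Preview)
Your construction is essentially the paper's own: once you take $L=\pi$, your cutoff $\chi_i(r)=\cos^2\bigl((r-\pi)/2\bigr)=(1-\cos r)/2$ is \emph{exactly} the paper's ``binding function'' $\varphi_b^i(t,x)=c_i(t)\,(1-\cos|x-\theta_i|)/2$, and the inequality $|\nabla\chi_i|^2\le\chi_i(1-\chi_i)$ that you isolate is precisely what makes that binding function an exact Hamilton--Jacobi solution rather than just a subsolution. So the two proofs coincide; your presentation has the virtue of making clear \emph{why} the cosine transition is forced.

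There is, however, a genuine (if easily repaired) gap in your disjointness argument. You assert that ``the freedom left in the single-pair construction'' lets you place $\theta_k$ near the midpoint with $|\theta_k-x_0^k|<\pi/2$; this is not established anywhere (the free parameter $\kappa$ in the one-Dirac certificate does move $\theta$, but not to an arbitrary location), and even if it were, your chain of inequalities only gives $|\theta_i-\theta_j|>4\pi-\pi/2-\pi/2=3\pi$, short of the $4\pi$ you need for $L=\pi$. The correct and simpler route is the one the paper takes: the single-pair construction already guarantees $\max\{|x_0^k-\theta_k|,|x_1^k-\theta_k|\}<\pi$ for each $k$, so applying the triangle inequality directly with the \emph{max}-distance hypothesis $|x^i_\alpha-x^j_\beta|>6\pi$ (rather than the derived $4\pi$ bound on the min) gives $|\theta_i-\theta_j|>6\pi-\pi-\pi=4\pi$.

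Finally, your plan treats only the non-degenerate case $h_0^i,h_1^i>0$; the statement also allows vanishing masses, for which the certificate $\varphi_i$ is not defined. The paper handles this by a separate limiting argument (letting the vanishing mass tend to zero and invoking continuity of $\WF_\delta$, as in the $\alpha=0$ case of \thref{rescaled measures}), establishing optimality but not uniqueness --- consistent with the theorem only claiming uniqueness in the non-degenerate case.
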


\begin{remarks} \mbox{}
\begin{itemize}
\item A finer analysis could easily lower the ``security distance'' of $6\pi \delta$ between pairs of Diracs so that they can be considered independently.
\item In figure \ref{fig: matchingdiracs} we give an example where the hypotheses of \thref{pairsdiracs} are satisfied.
\item We only prove uniqueness of the geodesic when the certificate is non-degenerate. But it is likely that uniqueness hold in general under our hypotheses.
\end{itemize}
\end{remarks}

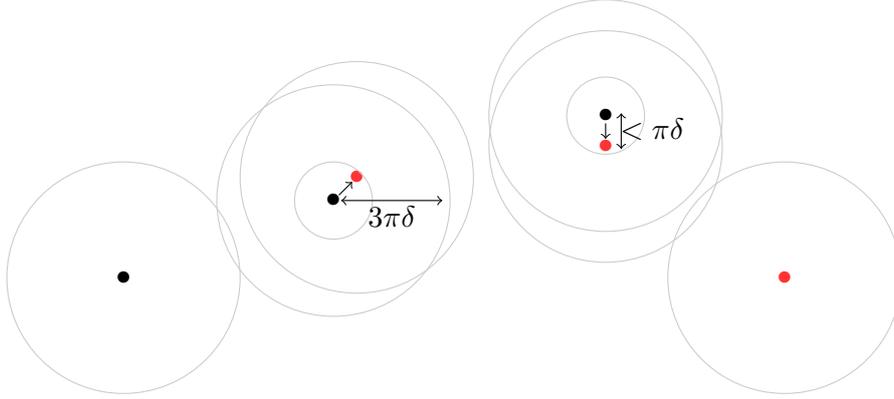
\begin{figure}
 \centering
  \resizebox{0.90\linewidth}{!}{
\begin{tikzpicture}
       	\draw[black] (0,0) node {$\bullet$} ;
	\draw[red!80] (.3,.3) node {$\bullet$} ; 
	\draw[->] 	(.07,.07)    	-- 	(.24,.24);
	\draw[black!20] (0,0) circle (1.5);
	\draw[black!20] (0,0) circle (0.5);
	\draw[black!20] (.3,.3) circle (1.5);
	\draw[<->]	(0.1,0)    	-- 	(1.4,0);
	\draw (0.75,-0.2)  node {$3\pi \delta$};
	\draw[black] (3.5,1.1) node {$\bullet$} ;
	\draw[red!80] (3.5,0.7) node {$\bullet$} ; 
	\draw[->] 	(3.5,1.0)    	-- 	(3.5,0.8);
	\draw[black!20] (3.5,1.1) circle (1.5);
	\draw[black!20] (3.5,1.1) circle (0.5);
	\draw[black!20] (3.5,.7) circle (1.5);
	\draw[<->] 	(3.7,1.13)    	-- 	(3.7,0.67);
	\draw (4.1,0.92)  node {$<\pi \delta$};
	\draw[black] (-2.7,-1) node {$\bullet$} ;
	\draw[black!20] (-2.7,-1) circle (1.5);
	\draw[red!80] (5.8,-1) node {$\bullet$} ;
	\draw[black!20] (5.8,-1) circle (1.5);
\end{tikzpicture}
        }
\caption{Configuration in dimension $2$ satisfying hypotheses of \thref{pairsdiracs}. The initial $\rho_0$ and final $\rho_1$ measures are composed of the black and red Diracs, respectively. At least 1 couple of spheres of radius $3\pi \delta$ do not intersect for each pair of systems. We show that in that case, the geodesic is the sum of two travelling Diracs (for the pairs), an on-place decreasing Dirac (single black) and an on place increasing Dirac (single red).}
\label{fig: matchingdiracs}
\end{figure}

\begin{proof}
Again we choose $\delta=1$ without loss of generality. As for now, assume that all masses $h^i_0, h^i_1$ are strictly positive, the case of vanishing masses, i.e.\ ``single'' Diracs, will be fixed at the end of the proof. We aim at building an optimality certificate $\varphi \in C^1([0,1]\times \bar{\Om})$ for the candidate geodesic of the theorem. Hence $\varphi $ should satisfy, for all $(t,x)\in [0,1]\times \bar{\Om}$,
\[
\D_t \varphi +\frac12 \left( |\nabla \varphi| ^2 +  \varphi^2 \right) \leq 0
\]
and for all $t\in [0,1]$, for all $i\in \{ 1, \dots , N \}$ :
\[
\quad 
\begin{cases}
\nabla \varphi(x^i(t),t) = (x^i)'(t) \\
\varphi(x^i(t),t)=(h^i)'(t)/h^i(t) \\
\D_t \varphi +\frac12 \left( |\nabla \varphi |^2 + \varphi^2 \right) = 0 \quad  \text{for points of the form $(t,x^i(t))$}.
\end{cases}
\]

We introduce for $i\in \{1,\dots N\}$, the certificates $\varphi^i$ for the geodesics between each couple of Diracs $(h^i_0 \delta_{x^i_0}, h^i_1 \delta_{x^i_1})$ as defined in equation \eqref{certifddim}. The three parameters describing $\varphi^i$ are denoted $t^i_1, t^i_2$ and $\theta^i$. As $\theta^i$ is only defined up to translations, we decide from now on that $\theta^i$ is the unique such translation satisfying $\max\{ \vert x^i_0-\theta^i \vert, \vert x^i_1-\theta^i \vert \}< \pi$, which exists under our assumption that $|x_0^i - x_1^i|<\pi$. Thus, from the hypotheses in the theorem, we have that $\min_{i \neq j}\{\vert \theta^i- \theta^j \vert \} > 4 \pi$. Now consider the \emph{binding} functions, defined for $i \in \{1,\dots,N \}$ by
\[
\varphi^i_b(t,x)=-\frac{1}{t-t^i_2} \cos(\vert x-\theta^i \vert ) + \frac{1}{t-t^i_2}. 
\]
Each $\varphi^i_b$ is in phase with $\varphi^i$ and satisfies 
\[
\varphi^i_b(t,\pi u + \theta^i)=\frac{2}{t-t_2^i}=\varphi^i(t,\pi u+ \theta^i) 
\quad \text{ and } \quad
\varphi^i_b(t,2\pi u+ \theta^i)=0
\]
for all $u\in \R^d$, $\vert u \vert = 1$ and $t\in[0,1]$. Also they are solutions to the equation $\D_t \varphi^i_b +\frac12 \left( (\nabla \varphi^i_b )^2 + (\varphi^i_b)^2 \right) = 0$. Now, as the constant zero function is a solution of this p.d.e too, we introduce
\begin{equation*}
\varphi(t,x)=
\begin{cases}
\varphi^i (t,x) & \text{if }  \vert x - \theta^i \vert \leq \pi \\
\varphi^i_b (t,x) & \text{if }  \vert x - \theta^i \vert \leq 2\pi  \text{ and } \vert x - \theta^i \vert > \pi\\
0 & \text{ otherwise.}
\end{cases}
\end{equation*}
Under the hypotheses of the theorem, only one condition is met at a time so this function is well defined. It belongs to $C^1([0,1]\times \bar{\Om})$ because the bindings are located where the gradients vanish, on the extrema of cosines. See Figure \ref{figure certificate} to picture how the bindings are performed between the $\varphi^i$s. By construction, all hypotheses are fulfilled so that $\varphi$ be an optimality and uniqueness certificate.

Now if one or more of the $h^i_0$ or $h^i_1$ vanish, the same reasoning as in the proof of \thref{2diracs} allows to compute the distance between $\rho_0$ and $\rho_1$ (by an upper and lower bound, making the mass of vanishing Diracs tend to zero), and all that remains is to see that replacing the travelling Dirac by a Fisher-Rao geodesic for single Diracs reaches that cost. 
\end{proof}

\begin{figure}[ht]
\centering
 \includegraphics[trim=3cm 3cm 3cm 3cm, width=.7\linewidth]{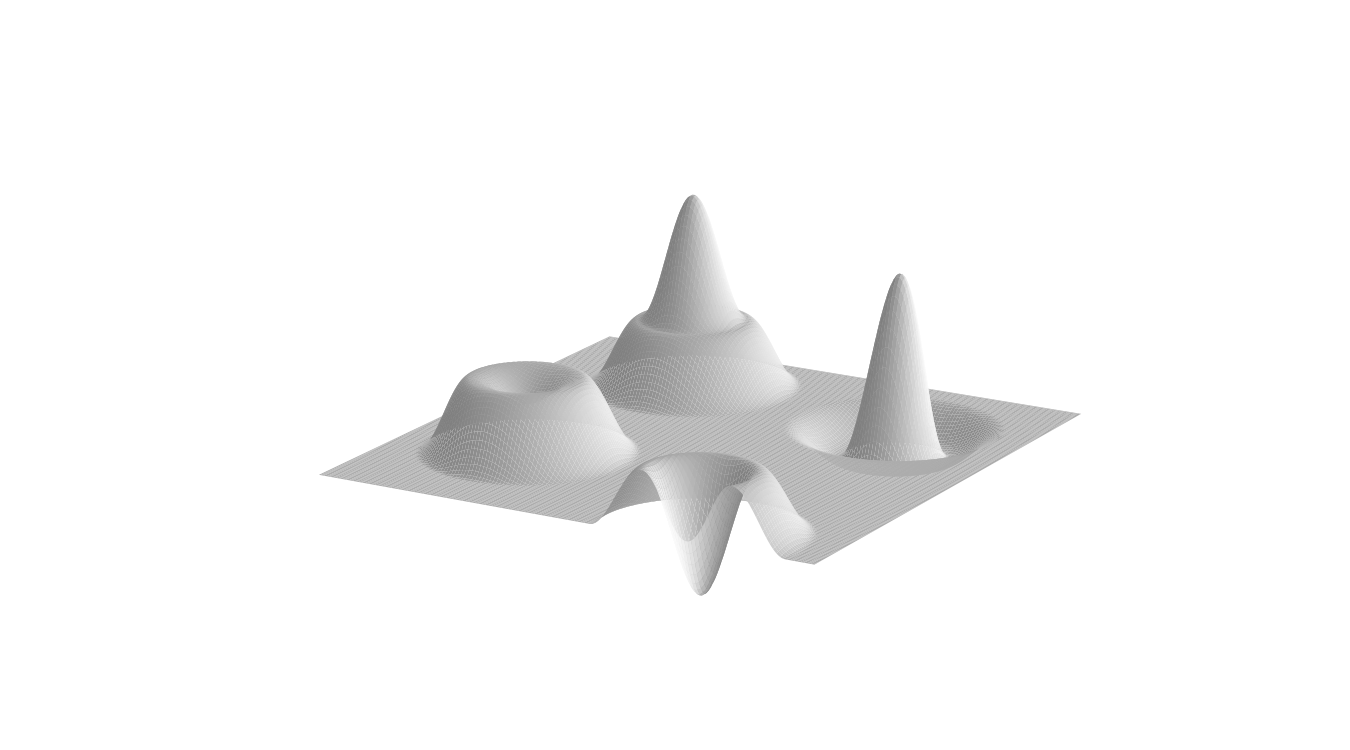}
 \caption{Example of an optimality certificate for $4$ pairs of Diracs, in the case $d=2$ at fixed $t$. The centers of the bumps are the $\theta^i$s. At the position $(t,x_i(t))$ of a travelling Dirac, the gradient gives its speed and the height gives its rate of growth.} \label{figure certificate}
\end{figure}



\section{Numerical Results}
\label{sec-numerics}

This section presents a numerical implementation of the computation of geodesics for our metric. The source code to reproduce these results is available online\footnote{\url{https://github.com/lchizat/optimal-transport/}}.

\subsection{Discretization}

The numerical resolution of the dynamical optimal transport problem is often performed with first order methods using proximal splitting algorithms (\cite{benamou2000computational}, \cite{papadakis2014optimal}).  These methods extend with no difficulties to the case of transport with sources, as long as the proximal operator of the new functional can be easily computed. In order to compare a few models of optimal transport with source, we implemented a Douglas-Rachford proximal splitting algorithm on a staggered grid, in the footsteps of \cite{papadakis2014optimal}. The introduction of the source term comes with a few adjustments so we describe the numerical method that we implemented, in the case of a transport problem in 1-D for simplicity (but the scheme works in arbitrary dimension).

\paragraph{Centered and staggered grids.}

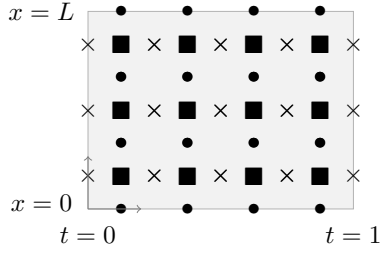
\begin{figure}
 \centering
  \resizebox{0.40\linewidth}{!}{
\begin{tikzpicture}
	\filldraw[color=black!30, fill=black!5] (-.5,-.5) rectangle (3.5,2.5);
	\foreach \x in {0,...,3}
   	 \foreach \y in {0,...,2} 
	 {
	\pgfmathparse{\x+0.5}\let\a\pgfmathresult
	\pgfmathparse{\y+0.5}\let\b\pgfmathresult
	\pgfmathparse{\x-0.5}\let\c\pgfmathresult
	\pgfmathparse{\y-0.5}\let\d\pgfmathresult
       	\draw[black,minimum size=5pt] (\x,\y) node {$\blacksquare$} ;
	\draw (\a,\y) node {$\times$} ; 
	\draw  (\x,\b) node {$\bullet$} ; 
	\draw  (\c,\y) node {$\times$} ; 
	\draw  (\x,\d) node {$\bullet$} ; 
	}
	
	\draw[->, black!50] 	(-.5,-.5)    	-- 	(-.5,.3);
	\draw	(-1.2,-.4) node {$x=0$};
	\draw	(-1.2,2.5) node {$x=L$};
	
	\draw[->,black!50] 	(-.5,-.5) 	-- 	(.3,-.5);
	\draw 	(-.5,-.9)  	node {$t=0$};
	\draw 	(3.5,-.9) 	node {$t=1$};
\end{tikzpicture}
        }
\caption{($\blacksquare$) centered and ($\times$, $\bullet$) staggered grids. The grey rectangle is the space-time domain and here $N=3$, $T=4$.}
\label{fig: grids}
\end{figure}


We consider the space domain $[0,L]$ and the time domain $[0,1]$. We denote $N$ the number of dicretization points in space and $T$ the number of discretization points in time. The centered grid is a even discretization of the interior of the space-time domain $[0,L] \times [0,1]$, namely
\[
\mathcal{G}_c = \left\{ \left( x_i=L\frac{i-1/2}{N}, t_j=\frac{j-1/2}{T} \right): 1 \leq i \leq N, \, 1 \leq j \leq T \right\}
\]
and the variable discretized on the centered grid are denoted
\[
V = (\rho,\M,\Z) \in \mathcal{E}_c = (\R^{\mathcal{G}_c })^3.
\]
Remark that the boundaries are not included in the centered grid. There are as many staggered grids as there are dimensions, defined as
\[
\mathcal{G}_s^x = \left\{ (x_i = L\frac{i-1}{N}, t_j=\frac{j-1/2}{T}) :  1\leq i\leq N+1, \, 1\leq j \leq T \right\}\, ,
\]
\[
\mathcal{G}_s^t = \left\{ (x_i = L\frac{i-1/2}{N}, t_j=\frac{j-1}{T}) :  1\leq i\leq N, \, 1\leq j \leq T+1 \right\}\, , 
\]
and the variables discretized on the staggered grid are denoted by
\[
U = (\bar{\rho},\bar{\M},\bar{\Z}) \in \mathcal{E}_s = \R^{\mathcal{G}_s^t }\times \R^{\mathcal{G}_s^x }\times \R^{\mathcal{G}_c } .
\]
The source $\Z$ is still discretized on the centered grid because it intervenes as is in the constraint (it is not differentiated).

Remark now that the continuity constraint is defined for a variable carried by the staggered grid while the functional is defined for a variable carried by the centered grid. One way out is to keep two variables $(U,V)\in \mathcal{E}_s \times \mathcal{E}_c$ (one on each grid) and to add an interpolation constraint between them, i.e.\ we require $V=\mathcal{I}(U)$ where $I$ is the midpoint interpolation operator 
\[
I(U)_{i,j} = \left( \frac{\bar{r}_{i,j+1}+\bar{r}_{i,j}}{2},\frac{\bar{m}_{i+1,j}+\bar{m}_{i,j}}{2},\bar{z}_{i,j} \right) \, .
\]

The discrete convex optimization problem that we aim to solve as an approximation of the continuous one reads then
\begin{equation*}
\min_{U,V} \,  \ifonc(V) + \iota_{\mathcal{CE}}(U) + \iota_{\{V=\mathcal{I}(U)\}} (U,V)
\end{equation*}
where the three terms represent the discrete functional, the indicator of the continuity constraint and the indicator of the interpolation constraint respectively. The discretization of the first two terms is carefully described in the next paragraphs.

\paragraph{Discrete continuity constraint.}
The discrete continuity constraint set with boundary conditions is defined as
\begin{equation}
\mathcal{CE} = \left\{   U= (\bar{\rho},\bar{\M},\bar{\Z}) \in \mathcal{E}_s :  AU = f_0 ,\, A = \left[
\begin{array}{c} 
\div - s_z \\ \hline 
s_b
\end{array}
\right]  \text{  and } f_0 = \left[
\begin{array}{c}  
0  \\ \hline 
b_0
\end{array}
\right]  \right\}
\end{equation}
where $s_z : U \mapsto \bar{\zeta}$, $s_b$ selects the values of $U$ on the boundaries of the staggered grids and $b_0$  is a vector containing $\rho_0$, $\rho_1$ and zeros for the Neumann boundary conditions. The divergence operator $\div$ only acts on the components $(\bar{\rho},\bar{\M})$ of $U$, takes its values on the centered grid and is defined for $U\in \mathcal{E}_s$ as 
\[
\div(U)_{i,j} = \frac{N}{L} (\bar{\M}_{i+1,j}-\bar{\M}_{i,j}) + T (\bar{\rho}_{i,j+1}-\bar{\rho}_{i,j}) \, .
\]

\paragraph{Discrete functional.}

First, we use \thref{rescaling} and bring ourselves back to the case $\delta = 1$ by rescaling the space ,  as this allow to obtain a simpler proximal operator. The discrete action is defined as $\ifoncN(V) \defeq \sum_{k\in\mathcal{G}_c } f(\rho_k,\M_k,\Z_k)$ with 
\[
f(\rho_k,\M_k,\Z_k) \defeq \frac{|\M_k|^2 + \Z_k}{2\rho_k} \, .
\]

\subsection{Minimization Algorithm}

\paragraph{Proximal methods and the Douglas Rachford algorithm.}

A popular class of first order methods for solving non-smooth convex optimization problems, so-called proximal splitting schemes, replaces the explicit gradient descent step by an implicit descent step using the so-called proximal operator. For a proper, convex, l.s.c.\ function $F$ defined on a Hilbert space $\mathcal{H}$ with values in $\R\cup +\infty$, the proximal operator is the single-valued map defined as
\[
	\prox_{\gamma F}(x) = \argmin_{\bar{x}\in \mathcal{H}} \frac12 \Vert x - \bar{x}  \Vert^2 + \gamma F(\bar{x}) \, .
\]
If $F$ is smooth, the optimality condition imply that $\bar{x} \defeq \prox_{\gamma F}(x)$ should satisfy $\bar{x} = x-\gamma \nabla F(\bar{x})$, justifying the common interpretation of the proximal operator as an implicit gradient step. For the indicator function of a convex set, the proximal operator is the projection on this set. For more complicated functions, it might be as difficult to compute the proximal operator as solving the whole minimization problem. Fortunately, if the function is the sum of simpler terms, minimization can be performed by so-called \emph{proximal splitting} methods. There are several ways to combine proximal operators in order to minimize the sum of functions. The one we implemented is the Douglas Rachford algorithm which allows to minimize the sum of two convex, proper, l.s.c.\ functions $G_1$ and $G_2$ with the following iterative scheme: choose two initial points $(z^{(0)},w^{(0)})\in \mathcal{H}^2$ and define the sequence
\begin{align*}
	w^{(l+1)} &= w^{(l)}+ \alpha (\prox_{\gamma G_1}(2z^{(l)}-w^{(l)})-z^{(l)}), \\
	z^{(l+1)} &= \prox_{\gamma G_2} (w^{(l+1)}) \, .
\end{align*}
If $0<\alpha<2$ and $\gamma >0$, one can show thant $z^{(l)} \rightarrow z^*$ a minimizer, see \cite{combettes2007douglas}. Depending on which functions we take as $G_1$ and $G_2$, we obtain various algorithms; in our code we used
\[
	G_1(U,V) = \iota_{\mathcal{CE}}(U) + \ifoncN(V) \quad \text{and} \quad G_2(U,V) = \iota_{V=\mathcal{I}(U)}(U,V) \, . 
\]
For a detailed review of proximal splitting algorithms, we refer the reader to \cite{combettes2011proximal}. Let us now compute the proximal operators.

\paragraph{Computing $\prox_{\gamma \ifoncN}$.}

The proximal operator of the functional $\ifoncN$ can be computed in closed form. The following result is a direct adaptation of \cite{papadakis2014optimal}.

\begin{proposition}
One has
\[
\forall V \in \mathcal{E}_c, \quad \prox_{\gamma \ifoncN}(V) = \left( \prox_{\gamma \foncN}(V_k)\right)_{k \in \mathcal{G}_c}
\]
where, for all $(\tilde{\rho},\tilde{\M}, \tilde{\Z}) \in \R \times \R^d \times \R$,
\[
\prox_{\gamma \foncN}(\tilde{\rho},\tilde{\M}, \tilde{\Z})  =
\begin{cases}
 \left( \rho^*, \frac{\rho^* \, \tilde{\M}}{\rho^* + \gamma},\frac{\rho^* \, \tilde{\Z}}{\rho^* + \gamma} \right) & \text{if $\rho^*>0$,}\\
 ( 0, 0, 0) & \text{otherwise},
\end{cases}
\]
and $\rho^*$ is the largest real root of the third order polynomial equation in $X$
\[
 P[X] = (X-\tilde{\rho})(X+\gamma)^2 - \frac{\gamma}{2} (|\tilde{\M}|^2+\tilde{\Z}^2) \, .
 \]
\end{proposition}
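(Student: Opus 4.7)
The plan is to prove the statement in two stages: first use separability to reduce to a scalar (or actually $\mathbb{R} \times \mathbb{R}^d \times \mathbb{R}$-valued) problem at each grid point, then solve that low-dimensional proximal problem explicitly via first-order conditions.

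For the separability step, observe that by construction $\ifoncN(V) = \sum_{k \in \mathcal{G}_c} \foncN(V_k)$ and the ambient Euclidean norm on $\mathcal{E}_c$ likewise decouples, $\|V - \tilde V\|^2 = \sum_k |V_k - \tilde V_k|^2$. Consequently the proximal minimization decouples across grid points and the first identity of the proposition is immediate. From here on everything is local, and I only need to compute $\prox_{\gamma \foncN}$ on a single fiber $\mathbb{R} \times \mathbb{R}^d \times \mathbb{R}$.

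For the pointwise problem, I would first handle the easy case where the optimum satisfies $\rho > 0$. Differentiating $\tfrac12|(\rho,\M,\Z) - (\tilde\rho,\tilde\M,\tilde\Z)|^2 + \gamma \foncN(\rho,\M,\Z)$ in $\M$ and $\Z$ and setting the gradient to zero yields $\M = \rho\tilde\M/(\rho+\gamma)$ and $\Z = \rho\tilde\Z/(\rho+\gamma)$, exactly the formulas announced. Substituting these back reduces the problem to minimizing the one-dimensional function
\[
g(\rho) \;=\; \tfrac12(\rho - \tilde\rho)^2 + \frac{\gamma\,s}{2(\rho + \gamma)}, \qquad s := |\tilde\M|^2 + \tilde\Z^2,
\]
and a direct computation shows that the stationarity condition $g'(\rho)=0$ is precisely the cubic equation $P[\rho]=0$ stated in the proposition.

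The main structural point — which is also where some care is required — is to identify \emph{which} root of $P$ to pick. The key observation is that $g''(\rho) = 1 + \gamma s/(\rho+\gamma)^3 \geq 1$ on the half-line $\rho > -\gamma$, so $g$ is strictly convex there, and hence $P$ has at most one real root in $(-\gamma,+\infty)$. Since $P$ is a cubic with positive leading coefficient and $P(-\gamma) = -\tfrac{\gamma s}{2} \leq 0$, exactly one such root $\rho^\star$ always exists, and strict convexity forces it to be the largest real root of $P$. To conclude, I would split on the sign of $\rho^\star$: if $\rho^\star > 0$, then the candidate triple built from $\rho^\star$ lies in the feasible set $\{\rho>0\}$ and attains $g(\rho^\star) < \lim_{\rho\to 0^+} g(\rho) = \tfrac12(\tilde\rho^2 + s)$, which is precisely the cost at $(0,0,0)$; if $\rho^\star \leq 0$, then $g$ is non-decreasing on $[0,+\infty)$ by convexity, so no $\rho>0$ beats $(0,0,0)$, and lower semicontinuity of $\foncN$ at the origin (coming from the definition $\foncN(0,0,0)=0$) ensures that $(0,0,0)$ is indeed the minimizer. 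The subtle step is this last comparison — reconciling the strict $\rho>0$ branch with the degenerate point $(0,0,0)$ at which $\foncN$ is discontinuous — but the monotonicity of $g$ and the matching of one-sided limits make it a short verification.
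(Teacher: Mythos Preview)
Your proof is correct and complete. Note that the paper itself does not give a proof of this proposition; it simply states that ``the following result is a direct adaptation of \cite{papadakis2014optimal}''. Your argument---separability across grid points, then eliminating $\M$ and $\Z$ via first-order conditions to reduce to the one-variable function $g$, and finally using strict convexity of $g$ on $(-\gamma,+\infty)$ to identify the relevant root of $P$ as the largest real one---is exactly the standard derivation that the cited reference carries out (there without the $\Z$ variable, which here just adds to $s$). The boundary analysis you give, matching $\lim_{\rho\to 0^+} g(\rho)$ with the cost at $(0,0,0)$ and using monotonicity of $g$ when $\rho^\star\le 0$, is the right way to close the case split and is slightly more careful than what one usually sees written out.
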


\paragraph{Computing $\proj_{\mathcal{CE}}$.}

The proximal operator of $\iota_{\mathcal{CE}}$ is the projection on the affine set $\mathcal{CE}$, which can be computed for $U \in \mathcal{E}_s$ as
 \begin{eqnarray}
\mathcal{P}_\mathcal{CE} (U) & = & U + A^* (A A^*)^{-1}(b_0 -AU) \\
& = & \mathcal{P}_B (U) - (Id-I_b)(s_z^* -\div^*) S^{-1} (s_z - \div ) (\mathcal{P}_B (U))
\end{eqnarray}
where $S=(\div - s_z)(Id - s_b^* s_b)(-\nabla - s_z^*)$ is the Schur complement of $AA^*$, $I_b$ is the identity on the boundaries on zero everywhere else and $\mathcal{P}_B$ is the projection on the boundary constraints. Given $p$, we can find $u$ satisfying $Su=p$ by solving $\Delta u -u + p=0$ on the centered grid with Neumann boundary conditions (on the staggered grids). In the Fourier domain, this equation reads
\[
\left[ 1 + (2- 2\cos( \pi m /N) )N/L + (2- 2\cos( \pi n/T) )T \right]   \hat{u}[m,n] = \hat{p}[m,n] \, .
\]
The DCT-II transform (with inverse DCT-III ) which coefficients are given by
\[
\hat{u}[k] = \sum_{n=1}^{N} u[n] \cos \left[ \frac{\pi}{N} (n+\frac{1}{2})k  \right]
\]
is adapted to our boundary conditions.

\paragraph{Compute $\proj_{V = \mathcal{I}(U)}$.}
The projection is given for all couples $(U_0,V_0) \in (\mathcal{E}_s, \mathcal{E}_c )$ by 
\[
\proj_{V = \mathcal{I}(U)} (U_0,V_0) = (U^*, \mathcal{I}(U^*))
\]
with $U^*=Q^{-1}(U_0+\mathcal{I}^*V_0)$ and $Q=Id+\mathcal{I}^*\mathcal{I}$.
As $Q$ is a tridiagonal matrix, LU factorization techniques allow to efficiently invert the system.

\subsection{Experiments}

\paragraph{Transport of Gaussian bumps.}

Let us first explore a synthetic case where the initial and final measures have the same mass. This case is shown on Figure \ref{fig:gaussians}. 
The initial and the final measures are both composed of two Gaussian densities of mass $1$ and $2$ supported on the segment $\Om=[0,1]$. The modes of the bumps are located at, from left to right, $0.2$, $0.3$, $0.65$ and $0.9$. The problem is discretized with $N=256$ samples in space and $T=11$ samples in time. The peculiar location of the Gaussian bumps allows to highlight the variations of the behavior of the geodesics, which is highly dependent on the metric and on the parameter $\delta$. 
For the $W_2$ geodesic (Figure \ref{subW2}), the rightmost grey bump is split in half and a part of it travels to the left side of the graph, giving a interpolated measure which, for some applications, is not so natural. The effect of a non-homogeneous functional is visible on Figure \ref{subL2}, where we penalized the $L^2$ norm of the source $\Z$ in the functional. Non-homogeneity makes the density matter: in this case, the source is spread because low densities are favored. 
Consider now the partial transport geodesics: on Figure \ref{subPT1}, we fixed the maximum distance mass can travel to $2\delta=0.3$, while on Figure \ref{subPT2}, that distance is $2\delta=0.15$. We observe, as expected, that the domain $\Om=[0,1]$ is split in an active and an inactive set. Also note that as the $TV$ cost is not strictly convex, geodesics are not unique and what happens in the inactive set depends on the initialization of the algorithm.
Finally, Figures \ref{subWF1} and \ref{subWF2} display the geodesics at $t=1/2$ for the $\WF_{\delta}$ metric with the \emph{cut locus} distances set, respectively, at $\pi \delta=0.3$ and $\pi \delta = 0.15$. Note that there is a slight similarity between the configuration here and the hypotheses of \thref{pairsdiracs}, and the observed behavior can be expected if we extrapolate the conclusions of that Theorem. In the first case, we obtain a geodesic consisting of two travelling bumps which inflate or deflate. While in the second configuration, only one bump travels as $\pi \delta$ is now smaller than the distance between the two bumps at the right. 

\begin{figure}
 \centering
\begin{subfigure}{0.5\linewidth} 
\centering
 \resizebox{1.\linewidth}{!}{
\includegraphics{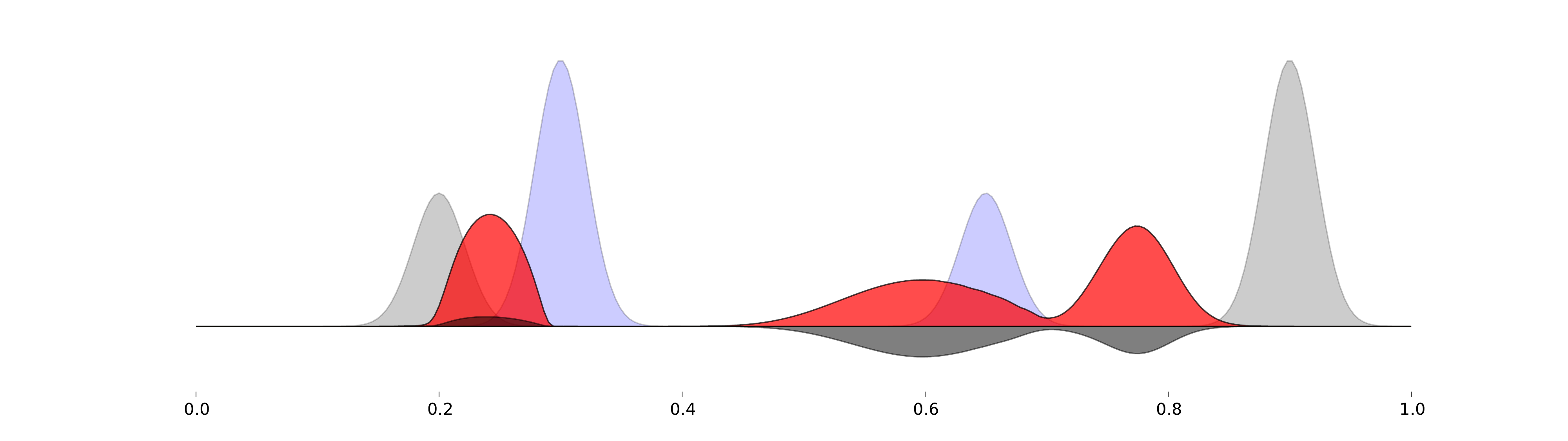}
}
\caption{Standard transport $W_2$}
\label{subW2}
\end{subfigure}%
\begin{subfigure}{0.5\linewidth} 
\centering
  \resizebox{1.\linewidth}{!}{
\includegraphics{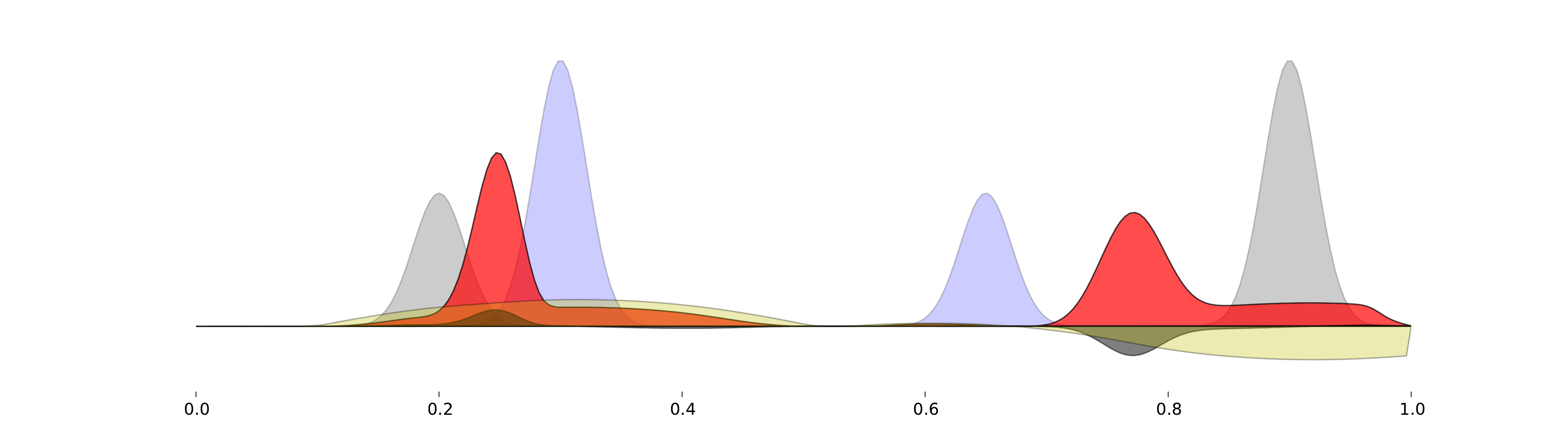}
}
\caption{$L^2$ penalization on the source}
\label{subL2}
\end{subfigure}%

\begin{subfigure}{0.5\linewidth} 
\centering
 \resizebox{1.\linewidth}{!}{
\includegraphics{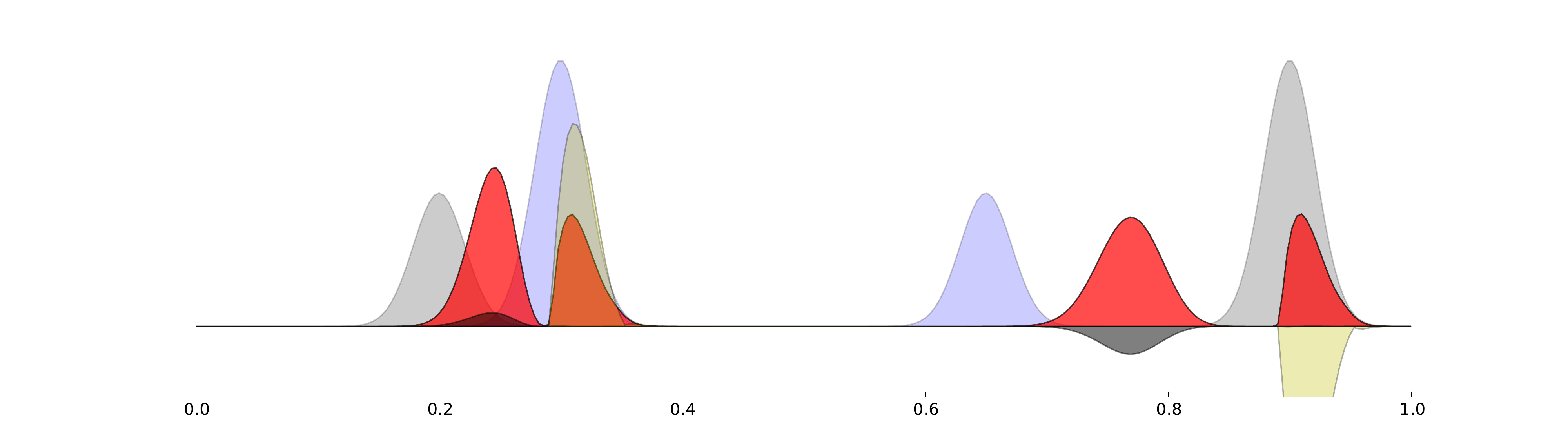}
}
\caption{Partial transport with $c_l=0.3$}
\label{subPT1}
\end{subfigure}%
\begin{subfigure}{0.5\linewidth} 
\centering
 \resizebox{1.\linewidth}{!}{
\includegraphics{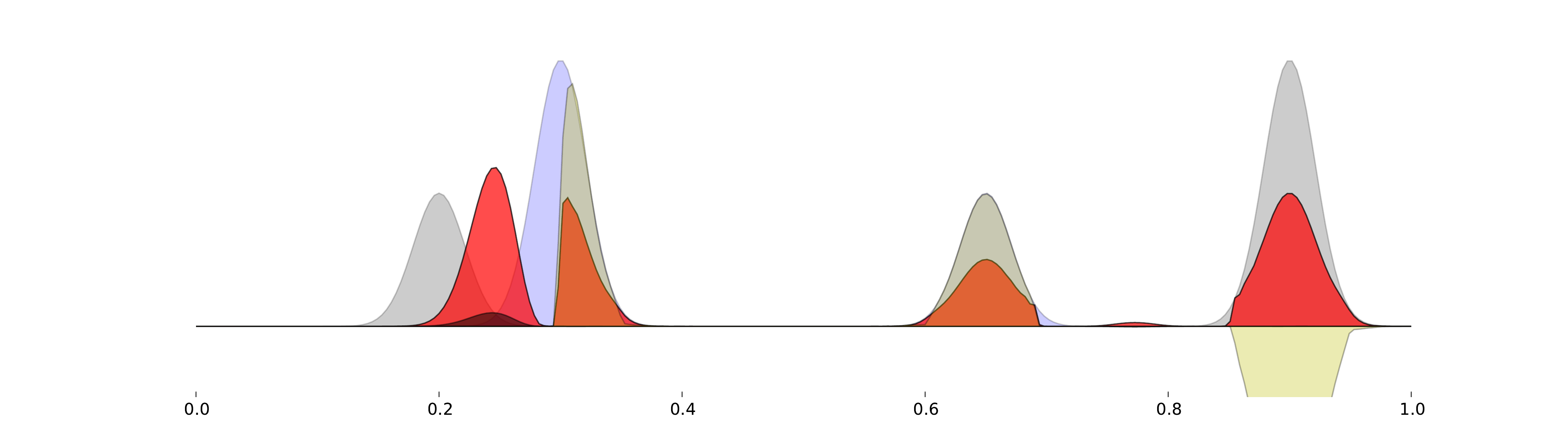}
}
\caption{Partial transport with $c_l = 0.15$}
\label{subPT2}
\end{subfigure}

\begin{subfigure}{0.5\linewidth} 
\centering
  \resizebox{1.\linewidth}{!}{
\includegraphics{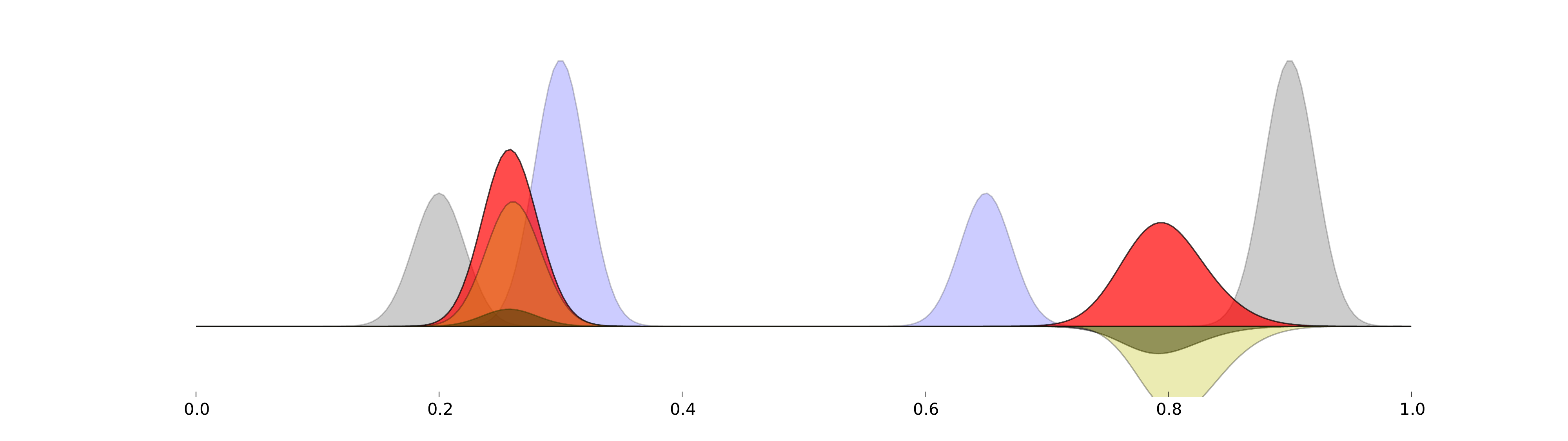}
}
\caption{$\WF_{\delta}$ with $c_l =0.3$}
\label{subWF1}
\end{subfigure}%
\begin{subfigure}{0.5\linewidth} 
\centering
  \resizebox{1.\linewidth}{!}{
\includegraphics{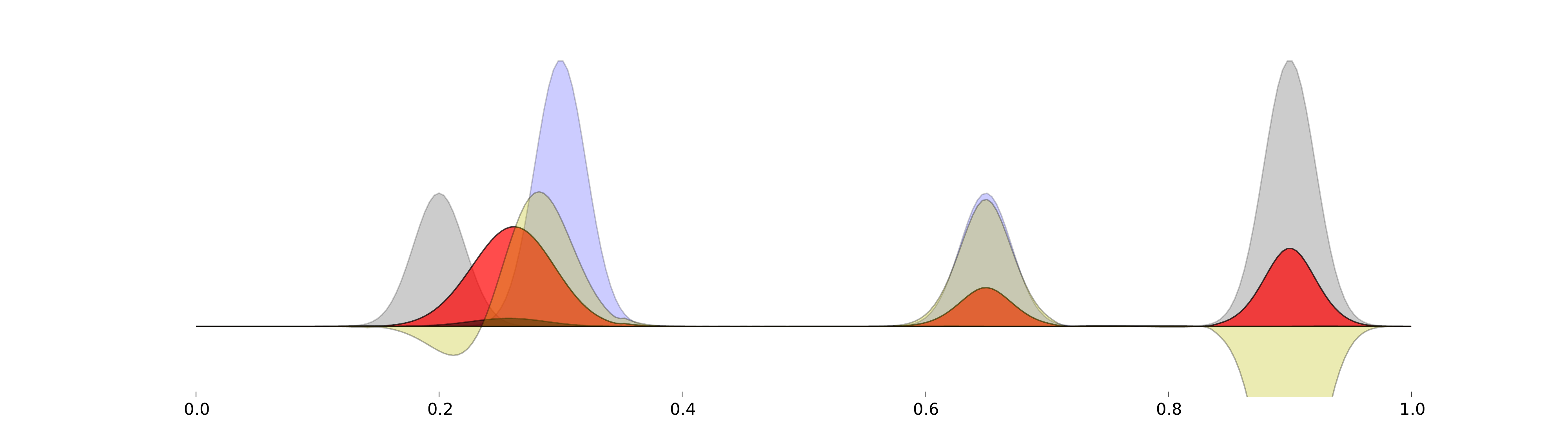}
}
\caption{$\WF_{\delta}$ with $c_l = 0.15$}
\label{subWF2}
\end{subfigure}
\caption{Comparison of interpolations for densities on the segment $[0,1]$. (gray) $\rho_0$ (blue) $\rho_1$ (red) $\rho_{t=1/2}$ (black) $\omega_{t=1/2}$ (yellow) $\zeta_{t=1/2}$. We indicate by $c_l$ the theoretical maximum distance a Dirac can travel, which is a function of $\delta$.}
\label{fig:gaussians}
\end{figure}
\paragraph{Synthetic 2D experiment.}
We now turn our attention to a synthetic example on the domain $\Om=[0,1]^2$. The initial density $\rho_0$ is the indicator of the ring of center $(\frac12, \frac12)$, internal diameter $0.5$ and external diameter $0.7$. The final density $\rho_1$ is the pushforward of $\rho_1$ by a random smooth map and thus $\rho_0(\Om)=\rho_1(\Om)$. The domain is discretized on a centered grid of $64\times 64$ samples in space and $T=12$ samples in time. We compare on Figure \ref{fig: synthetic rho} the geodesics for four different metrics: $d_{FR}$, $W_2$, partial optimal transport (with $2\delta=0.2$) and $\WF_{\delta}$ (with $\delta \pi = 0.4$). Notice that the two first rows thus show the limit geodesics for $\WF_{\delta}$ when $\delta$ tends to $0$ or $+\infty$.

Figure \ref{fig: synthetic details} helps to better understand the geodesics. On the top row, the velocity field shows that the $W_2$ geodesic transports a lot of mass to the bottom left protuberance. On the contrary, metrics allowing local variations of mass (partial transport and $\WF_{\delta}$), attenuate the component of the velocity field which is tangential to the ring, behavior which is more consistent with the intuitive solution. Finally, by looking at the source maps in the bottom row, we see the inactive sets of partial optimal transport with well defined edges and how this behavior strongly differs to that of $\WF_{\delta}$ geodesics.

\begin{figure}
 \centering
  \resizebox{1.0\linewidth}{!}{
\begin{tikzpicture}
\filldraw[color=black!5, fill=black!5] (-1.5,-1.) rectangle (.5,1);
\filldraw[color=black!5, fill=black!5] (11.5,-1.) rectangle (13.5,1);
\node (1r1) at (-0.5,0) {\includegraphics{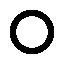}};
\node (3r2) at (2,2.25) {\includegraphics{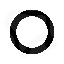}};
\node (3r3) at (4,2.25) {\includegraphics{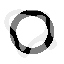}};
\node (3r4) at (6,2.25) {\includegraphics{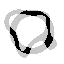}};
\node (3r5) at (8,2.25) {\includegraphics{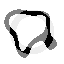}};
\node (3r6) at (10,2.25) {\includegraphics{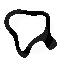}};
\node (0r2) at (2,.75) {\includegraphics{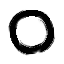}};
\node (0r3) at (4,.75) {\includegraphics{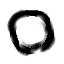}};
\node (0r4) at (6,.75) {\includegraphics{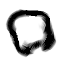}};
\node (0r5) at (8,.75) {\includegraphics{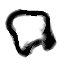}};
\node (0r6) at (10,.75) {\includegraphics{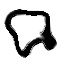}};
\node (1r2) at (2,-.75) {\includegraphics{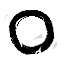}};
\node (1r3) at (4,-.75) {\includegraphics{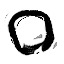}};
\node (1r4) at (6,-.75) {\includegraphics{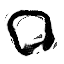}};
\node (1r5) at (8,-.75) {\includegraphics{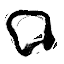}};
\node (1r6) at (10,-.75) {\includegraphics{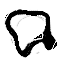}};
\node (2r2) at (2,-2.25) {\includegraphics{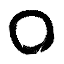}};
\node (2r3) at (4,-2.25) {\includegraphics{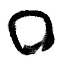}};
\node (2r4) at (6,-2.25) {\includegraphics{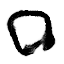}};
\node (2r5) at (8,-2.25) {\includegraphics{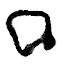}};
\node (2r6) at (10,-2.25) {\includegraphics{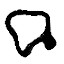}};

\node (1r7) at (12.5,0) {\includegraphics{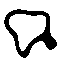}};

\draw[->] 	(1r1)    	-- 	(3r2);
\draw[->] 	(1r1)    	-- 	(0r2);
\draw[->]  	(1r1)    	-- 	(1r2);
\draw[->] 	(1r1)    	-- 	(2r2);
\draw[->]  	(3r6)    	-- 	(1r7);
\draw[->]  	(2r6)    	-- 	(1r7);
\draw[->] 	(0r6)    	-- 	(1r7);
\draw[->]  	(1r6)    	-- 	(1r7);
\draw[->]  	(-.5,-3)    	-- 	(12.5,-3);

\draw[dotted]        (2,0) -- (10,0);
\draw[dotted]        (2,-1.5) -- (10,-1.5);
\draw[dotted]        (2,1.5) -- (10,1.5);
\draw 	(-.5,-3.)  		node {$\bullet$};
\draw 	(-.5,-3.2)  		node {$t=0$};
\draw 	(12.5,-3.2)  	node {$t=1$};
\draw 	(6,-3.2)  	node {$t=0.5$};
\draw 	(-.5,-1.5)  	node {$\rho_0$};
\draw 	(12.5,-1.5)  	node {$\rho_1$};

\end{tikzpicture}
        }
\caption{Geodesics between $\rho_0$ and $\rho_1$ for four metrics (1st row) Fisher-Rao (2nd row) $W_2$ (3rd row) partial optimal transport (4th row) $\WF_{\delta}$}
\label{fig: synthetic rho}
\end{figure}

\begin{figure}[ht]
 \centering
       
\begin{subfigure}{0.3\linewidth} 
\centering
 \resizebox{1.\linewidth}{!}{
\includegraphics[clip,trim=2cm 2cm 2cm 2cm]{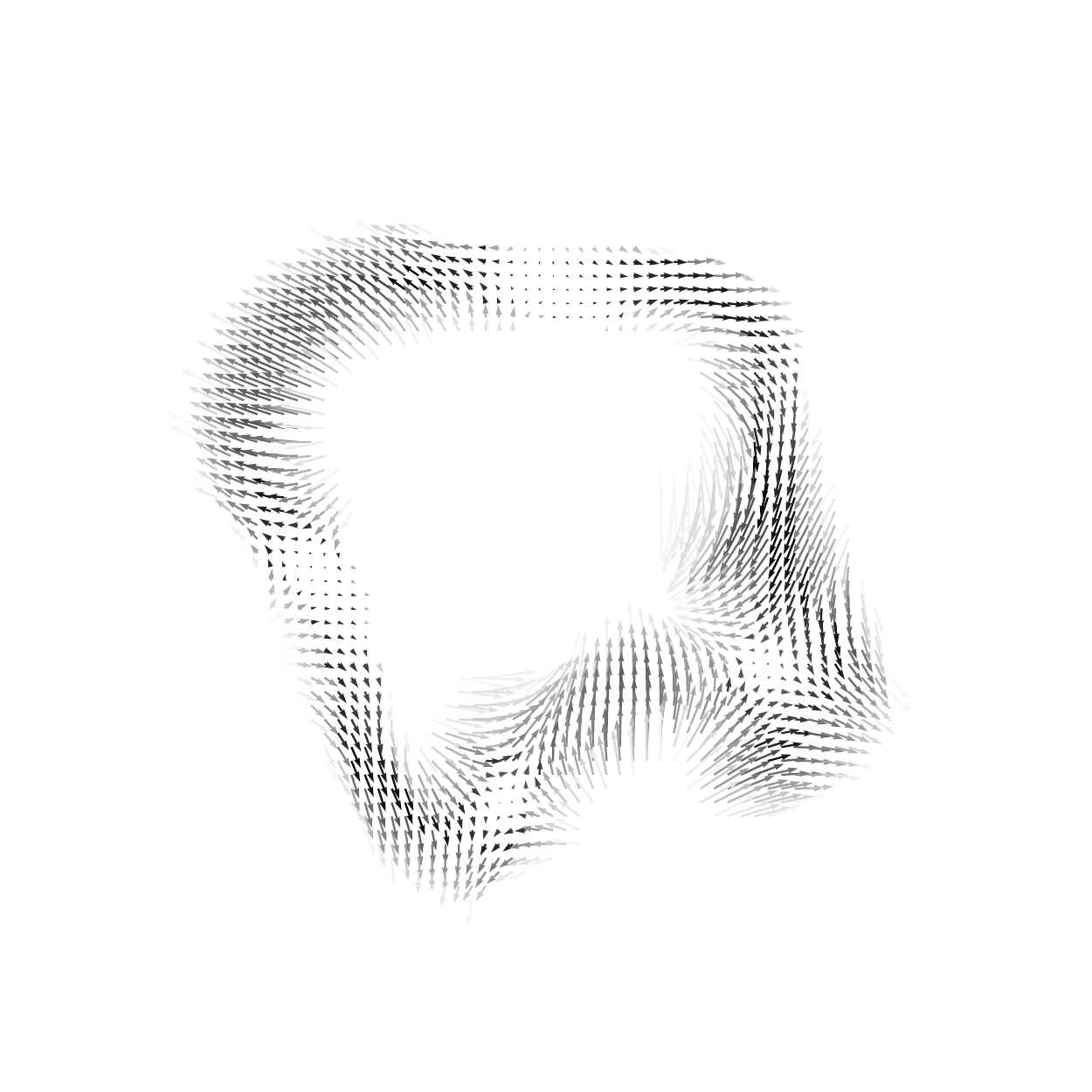}
}
\caption{$W_2$}
\end{subfigure}%
\begin{subfigure}{0.3\linewidth} 
\centering
 \resizebox{1.\linewidth}{!}{
\includegraphics[clip,trim=2cm 2cm 2cm 2cm]{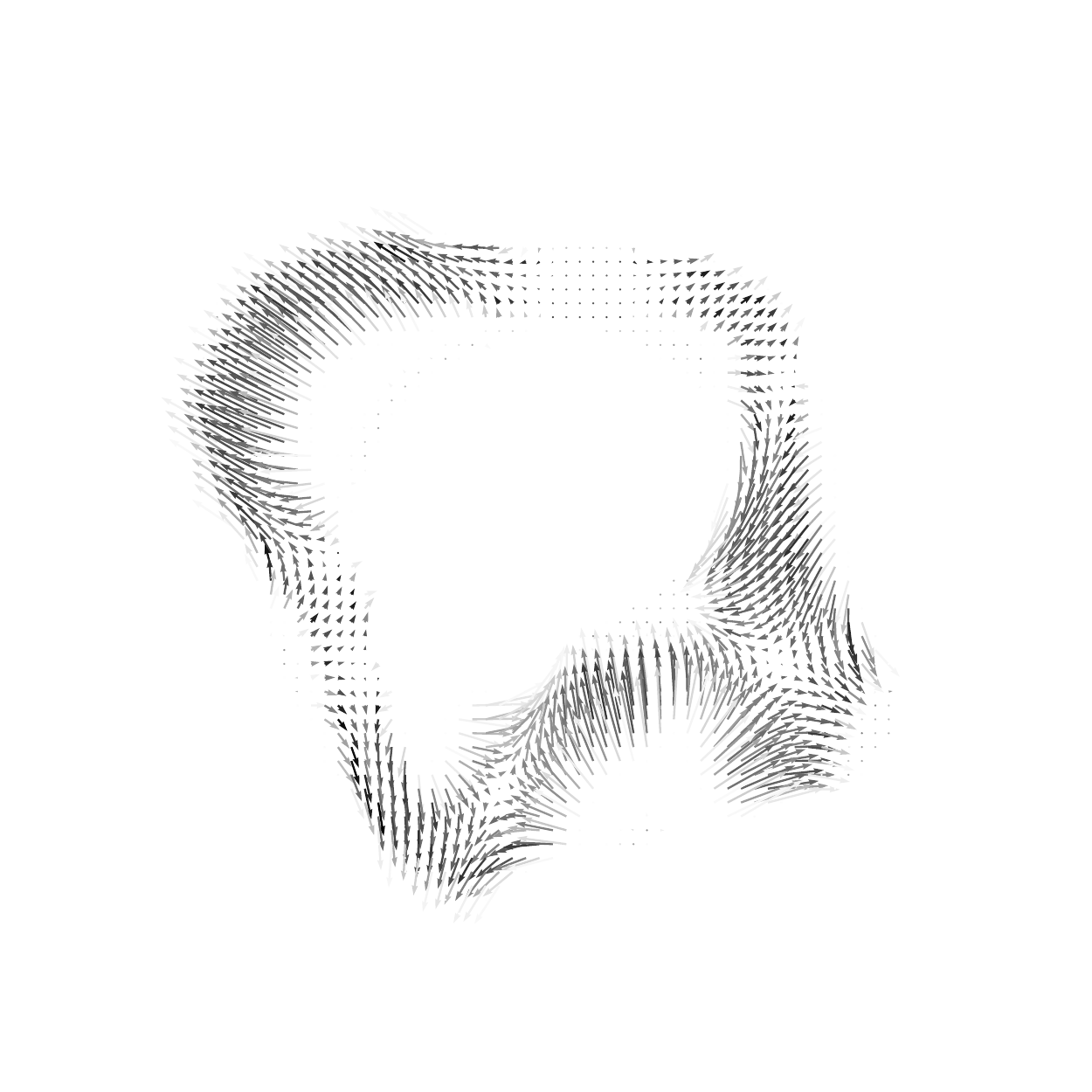}
}
\caption{Partial transport}
\end{subfigure}%
\begin{subfigure}{0.3\linewidth} 
\centering
  \resizebox{1.\linewidth}{!}{
\includegraphics[clip,trim=2cm 2cm 2cm 2cm]{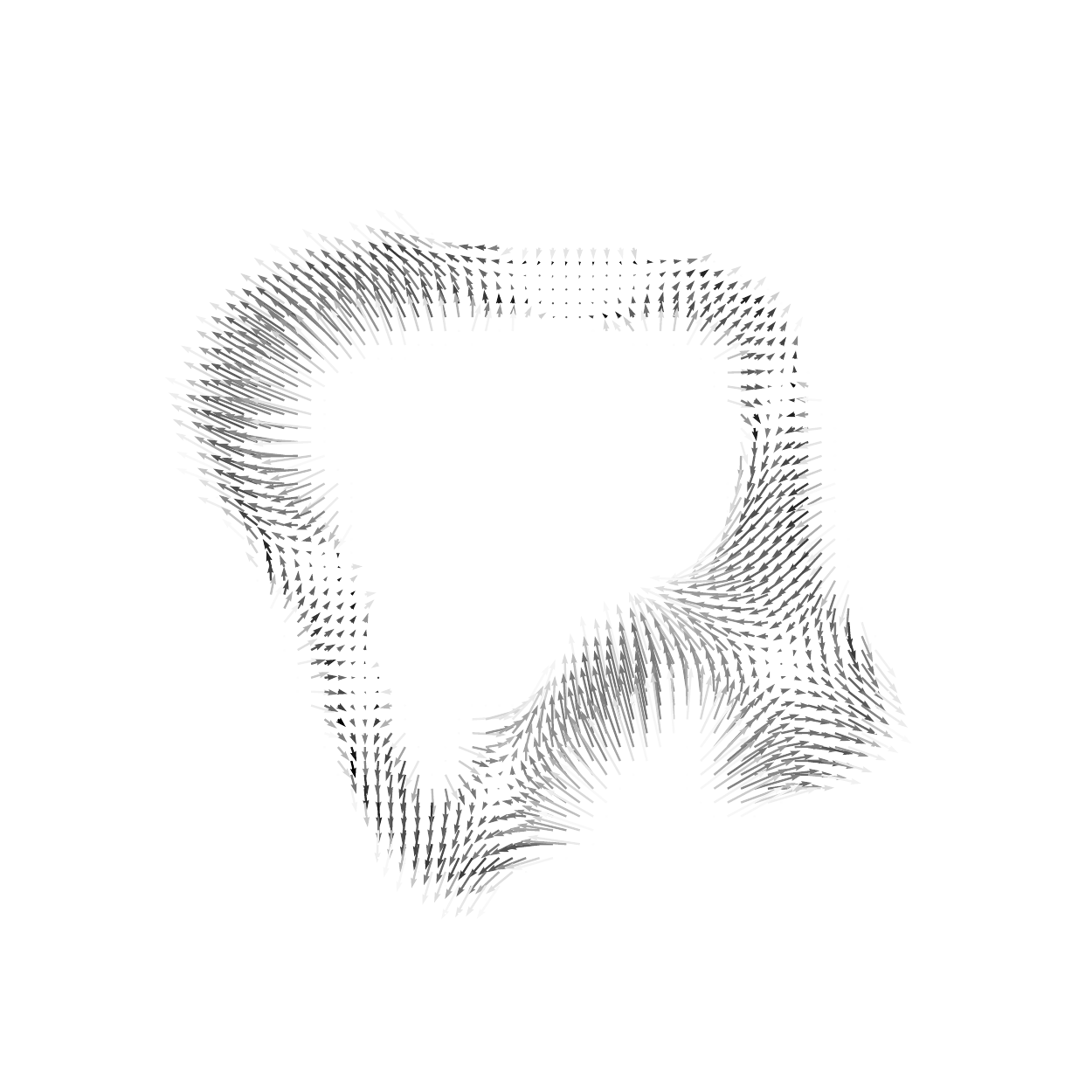}
}
\caption{$\WF_{\delta}$}
\end{subfigure}

\begin{subfigure}{0.25\linewidth} 
\centering
 \resizebox{1.\linewidth}{!}{
\includegraphics[clip]{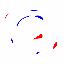}
}
\caption{Partial transport}
\end{subfigure}%
\begin{subfigure}{0.25\linewidth} 
\centering
  \resizebox{1.\linewidth}{!}{
\includegraphics[clip]{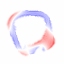}
}
\caption{$\WF_{\delta}$}
\end{subfigure}

\caption{(first row) Velocity field $v = \omega/\rho$ at time $t=1/2$. The higher $\rho$, the darker the arrow. (second row) Source $\zeta$ at time $t=1/2$. Blue stand for negative density and red for positive.}
\label{fig: synthetic details}
\end{figure}


\begin{figure}[ht]
 \centering
  \resizebox{1.0\linewidth}{!}{
\begin{tikzpicture}
\filldraw[color=black!5, fill=black!5] (-1.5,-1.25) rectangle (.5,1.25);
\filldraw[color=black!5, fill=black!5] (11.5,-1.25) rectangle (13.5,1.25);
\node (1r1) at (-0.5,0) {\includegraphics{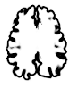}};

\node (0r2) at (2,1.25) {\includegraphics{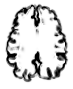}};
\node (0r3) at (4,1.25) {\includegraphics{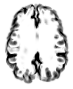}};
\node (0r4) at (6,1.25) {\includegraphics{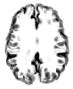}};
\node (0r5) at (8,1.25) {\includegraphics{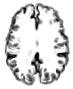}};
\node (0r6) at (10,1.25) {\includegraphics{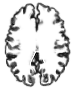}};
\node (1r2) at (2,-1.25) {\includegraphics{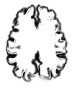}};
\node (1r3) at (4,-1.25) {\includegraphics{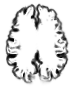}};
\node (1r4) at (6,-1.25) {\includegraphics{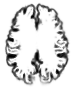}};
\node (1r5) at (8,-1.25) {\includegraphics{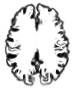}};
\node (1r6) at (10,-1.25) {\includegraphics{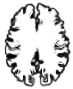}};

\node (1r7) at (12.5,0) {\includegraphics{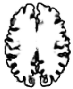}};

\draw[->] 	(1r1)    	-- 	(0r2);
\draw[->]  	(1r1)    	-- 	(1r2);

\draw[dashed,color=black!30] (2,0) --(10,0.);

\draw[->] 	(0r6)    	-- 	(1r7);
\draw[->]  	(1r6)    	-- 	(1r7);
\draw[->]  	(-.5,-3)    	-- 	(12.5,-3);

\draw 	(-.5,-3.)  		node {$\bullet$};
\draw 	(-.5,-3.2)  		node {$t=0$};
\draw 	(12.5,-3.2)  	node {$t=1$};
\draw 	(6,-3.2)  	node {$t=0.5$};
\draw 	(-.5,-1.5)  	node {$\rho_0$};
\draw 	(12.5,-1.5)  	node {$\rho_1$};

\end{tikzpicture}
        }
\caption{Geodesics between $\rho_0$ and $\rho_1$ for two metrics (top row) $W_2$ between rescaled densities (bottom row) $\WF_{\delta}$ metric with $\pi\delta=0.2$.}
\label{fig: brain interp}
\end{figure}

\begin{figure}
 \centering
       
\begin{subfigure}{0.3\linewidth} 
\centering
 \resizebox{1.\linewidth}{!}{
\includegraphics[clip,trim=2cm 1cm 2cm 1cm]{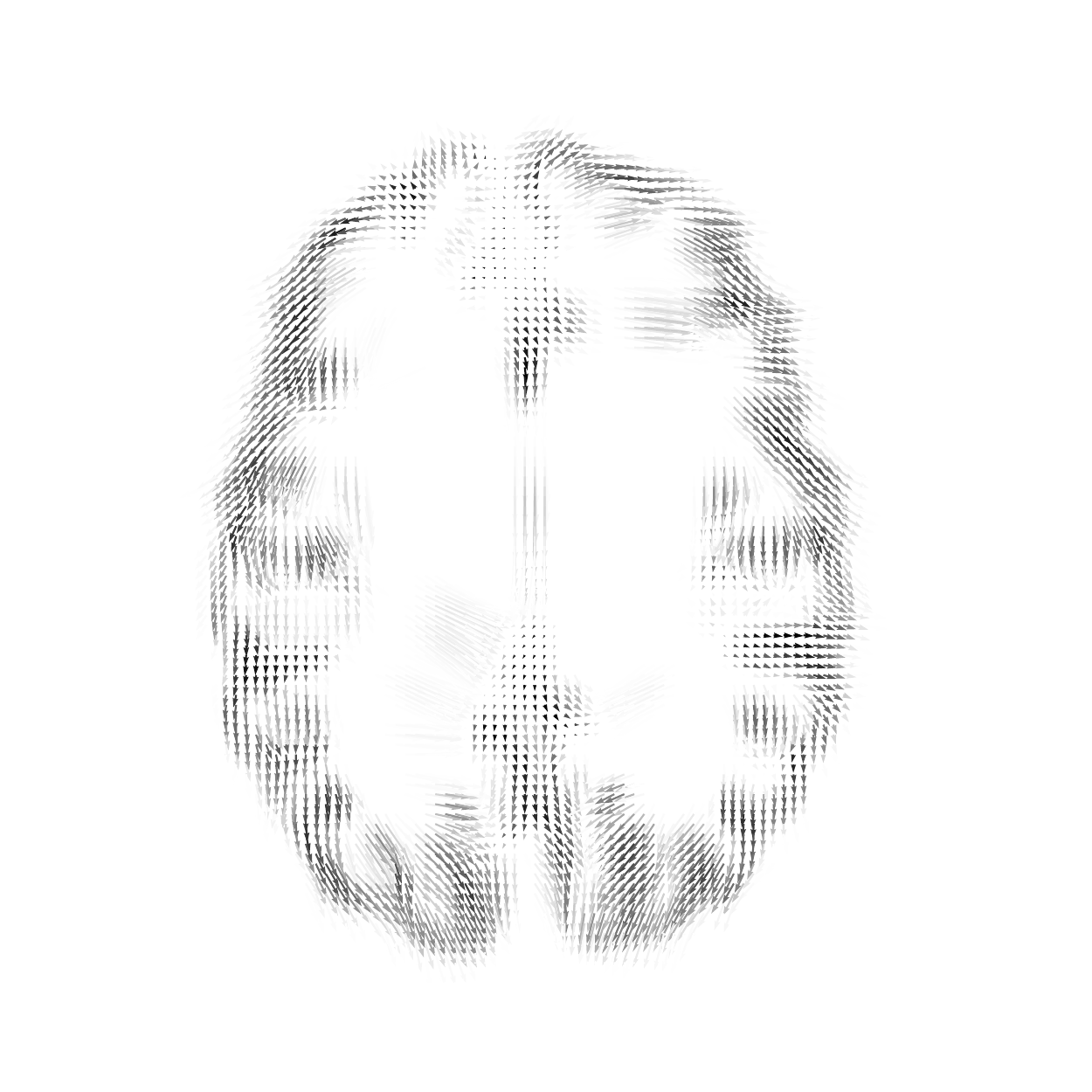}
}
\caption{$v_{0.5}$ for $W_2$}
\label{fig: brain velo w2}
\end{subfigure}%
\begin{subfigure}{0.3\linewidth} 
\centering
 \resizebox{1.\linewidth}{!}{
\includegraphics[clip,trim=2cm 1cm 2cm 1cm]{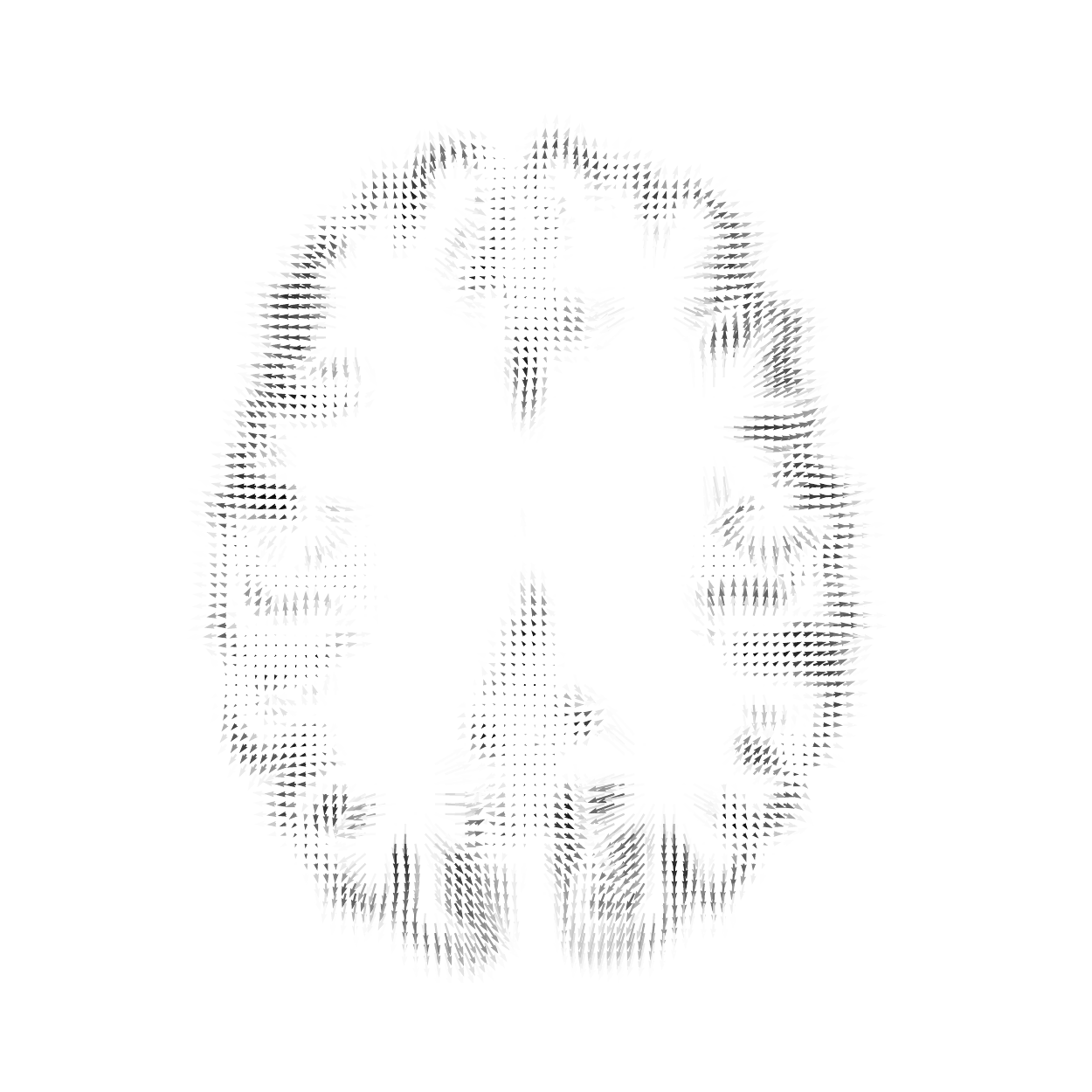}
}
\caption{$v_{0.5}$ for $\WF_{\delta}$}
\label{fig: brain velo wf}
\end{subfigure}%
\begin{subfigure}{0.33\linewidth} 
\centering
  \resizebox{1.\linewidth}{!}{
\includegraphics[clip,trim=0cm -0cm 0cm -0cm]{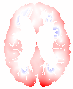}
}
\caption{$g_{0.5}$ for $\WF_{\delta}$}
\label{fig: brain g wf}
\end{subfigure}

\caption{(a) and (b) : velocity field $v=\omega/\rho$ for geodesics at time $t=1/2$. The higher $\rho_{0.5}$, the darker the arrow. (c) rate of growth $g=\Z/\rho$ at time $t=1/2$. Blue stand for negative density and red for positive and $g$ is set to zero when $\rho \approx 0$. Note that theoretically, (b) is the gradient field of (c).}
\label{fig: brain details}
\end{figure}

\paragraph{Biological images interpolation.}
Interpolating between shapes with varying masses is an important motivation for introducing this metric and is the object of our third and last numerical experiment. The initial $\rho_0$ and $\rho_1$ densities are extracted form two segmented images of the same young brain taken at different times. This example is rather challenging for image matching algorithms: matter is creased, folded and grows unevenly. There are $89 \times 73$ samples in space, $12$ samples in time and the spatial domain is $\Om=[0,0.82]\times [0,1]$. We computed the geodesics for $W_2$ and for $\WF_{\delta}$ with $\pi\delta = 0.2$. We show on Figure \ref{fig: brain interp} the geodesics and on Figure \ref{fig: brain details} the velocity fields and the rate of growth a time $t=1/2$.

Most of the tissue growth is located at the bottom of the domain. Consequently, the velocity field associated to the $W_2$ geodesic is dominated by top to bottom components, as seen on Figure \ref{fig: brain velo w2}. To the contrary, the geodesic for the $\WF_{\delta}$ metric manages to locally adapt the rate of growth, and the velocity field is far more consistent with the true evolution of the tissue. However, we retain some artifacts coming from optimal transport: some matter is teared off the brain lining and brought somewhere else to fill a need of mass. This behavior, inherent to the non-smooth nature of optimal transport plans, is clearly observed on Figure \ref{fig: brain interp} near the bulges that appear on the right and left sides of the brain.

Finally, let us recall the reader that, at optimality, the rate of growth $g$ is equal to the dual variable $\varphi$ and thus, the vector field on Figure \ref{fig: brain velo wf} is the gradient of the rate of growth on Figure \ref{fig: brain g wf}, which ranges in $[-0.05,0.18]$.



\section*{Conclusion}

When looking for generalizations of the dynamic optimal transport problem to unbalanced measures, a distance interpolating between optimal transport and Fisher Rao appears naturally. It is the only Riemannian-like deriving from a family of convex homogeneous dynamic minimization problems. 
By analyzing the effect of varying the interpolation factor, and by studying geodesics for atomic measures, we obtained theoretical clues on how the model behaves.
These clues can be summarized by the two following interpretations: 
(i) the distance behaves as a spatially localized version of optimal transport, in the sense that mass whose supports are far away does not interact;
(ii) it also behaves as a regularized version of Fisher-Rao, in the sense that, unlike Fisher-Rao, it is stable by perturbation of the supports. This theoretical behavior is confirmed by the numerical experiments.

\section*{Acknowledgements}

The work of Bernhard Schmitzer has been supported by the Fondation Sciences Math\'ematiques de Paris. 
The work of Gabriel Peyr\'e has been supported by the European Research Council (ERC project SIGMA-Vision). 
We would like to thank Yann Brenier and Jean-David Benamou for stimulating discussions.

\bibliographystyle{alpha}
\bibliography{bibchizat}
\end{document}